\definecolor {processblue}{cmyk}{0.96,0,0,0}
\newtheorem{Theorem}{Theorem}[section]
\newtheorem{Proposition}{Proposition}[section]
\newtheorem{Lemma}{Lemma}[section]
\newtheorem{Corollary}{Corollary}[section]
\newtheorem{Definition}{Definition}[section]
\newtheorem{Example}{Example}[section]
\newtheorem{Remark}{Remark}[section]
\newtheorem{Question}{Question}[section]
\newcommand*{\rom}[1]{\expandafter\@slowromancap\romannumeral #1@}
\begin{document}
\title{xxxx}
\date{}
 \title{Subshifts of finite type and self-similar sets}
\author{Kan Jiang and Karma Dajani\thanks{Karma Dajani  is the corresponding author.} }
\maketitle{}
\begin{abstract}
Let  $K\subset \mathbb{R}$ be a self-similar set generated by  some iterated function system. In this paper we prove, under some assumptions, that $K$ can be identified  with a subshift of finite type. With this identification,  we can calculate the Hausdorff dimension of $K$   as well as  the set of elements in $K$ with  unique codings using the machinery of Mauldin and Williams \cite{MW}. 
We give three different applications of our main result. Firstly, we calculate the Hausdorff dimension of the set of points  of $K$ with multiple codings. Secondly, in the setting of $\beta$-expansions,  when  the set of all the unique codings is not a subshift of finite type,  we can calculate in some cases the Hausdorff dimension of the univoque set.  Motivated by this application, we prove that the set of all the unique codings is  a subshift of finite type if and only if it is a sofic shift. This equivalent condition was not mentioned by de Vries and Komornik \cite[Theorem 1.8]{MK}.  Thirdly, for the doubling map with  asymmetrical holes, we give a sufficient condition such that the   survivor set  can be identified with a subshift of finite type.
 The third application partially  answers a problem posed by  Alcaraz Barrera \cite{Barrera}. 
\end{abstract}

\section{Introduction}
Let  $F=\{f_i\}_{i=1}^{m}$ be the contractive similitudes defined on $\mathbb{R}$. Hutchinson \cite{Hutchinson} proved  that there exists a unique non-empty  compact set $K$ satisfying the following equation
$$K=\cup_{i=1}^{m}f_{i}(K).$$
We call $K$ the self-similar set for  the iterated function system (shortly IFS)$\{f_i\}_{i=1}^{m}$. 
We say $\{f_{i}\}_{j=1}^{m}$ satisfies the open set condition (OSC) \cite{Hutchinson}  if there exists a non-empty bounded open set $O\subseteq \mathbb{R}$ such that
\[f_i(O)\cap f_{j}(O)=\varnothing,\, i\neq j\]
and $f_j(O)\subseteq O$ for all  $1\leq j\leq m$. 
With the open set condition, the Hausdorff dimension of $K$, which coincides with the similarity dimension that is the  unique solution $s$ of the equation $\sum_{i=1}^{m}|r_i|^s=1$, can be easily calculated. 
Here $\{r_i\}_{i=1}^{m}$ are the similarity ratios of $\{f_i\}_{i=1}^{m}$. 

Clearly, for any $x\in K$, there exists a sequence $(i_n)\in \{1,\cdots, m\}^{\mathbb{N}}$
such that
\[x=\lim_{n\to \infty}f_{i_1}\circ \cdots\circ f_{i_n}(0).\] 
We call $(a_n)$ a coding of $x$. Generally, if the IFS fails the open set condition, then for some points of $K$  they may have multiple codings. 
 If $x\in K$ has a unique coding,  then we call $x$ a
univoque point. The set of univoque points is called the univoque
set, and we denote it by $U_{F},$ i.e.,
\begin{align*}
U_{F}:=\Big\{x\in K: &\textrm{ there exists a unique } (i_{n})_{n=1}^{\infty}\in \{1,\ldots,m\}^{\mathbb{N}} \textrm{ satisfying }\\
& x=\lim_{n\to \infty}f_{i_1}\circ \cdots\circ f_{i_n}(0)\Big\}.
\end{align*}
Let $\widetilde{U}_{F}$ be the set of all the unique codings with respect to $\{f_i\}_{i=1}^{m}.$

Calculating the Hausdorff dimension of a self-similar set is a crucial problem in  fractal geometry. Usually, it is difficult to find the dimension of a self-similar set, especially  when  serious overlaps occur, see \cite{LauNgai, Hochman, NW, BBB} and references therein. In this paper we offer an effective methodology which enables us to find the Hausdorff dimension of many self-similar sets.   We give a brief introduction of our idea here.  Firstly, we assume that  for any $f_i(K)\cap f_j(K)\neq \emptyset$, $i\neq j$,  $f_i(K)\cap f_j(K)$ is the union of some exact overlaps (see Definition \ref{exact overlap}) and that the endpoints of each $f_i(K), 1\leq i\leq m,$  have periodic orbits  (see Definition \ref{Periodic orbits}).
The periodicity of the orbits of the endpoints allows us to give a Markov partition (see the definition in \cite{app}) of the self-similar set $K$. This step is essential as dynamically we transform the original full shift into  a subshift of finite type, which allows us to view  our attractor as a graph-directed self-similar set (\cite{MW}). The graph-directed self-similar set satisfies the open set condition due to the Markov property of the partition. As such we can calculate explicitly the Hausdorff dimension of $K$ (\cite[Theorem 3]{MW}).  Similar idea enables us to find the Hausdorff dimension of the univoque set as well. Analogous results still hold in higher dimensions. 

The first application of our method is the investigation of  the following set, 
$$U_{k}:=\{x\in K:x\,\, \mbox{ has exactly} \,\, k \,\,  \mbox{codings}\}, k=1, 2, \cdots, \aleph_0.$$
Considering this set can assist us in a better understanding of the coding space of  $K$.
In this paper we investigate only  one example, see Theorem \ref{k=1}. For the generalized results, see \cite{DJKL1, DJKL}. We give a simple introduction to our result. 
Suppose  that $K$ is the attractor of the following IFS,
$$\left\{f_{1}(x)=\lambda x, f_{2}(x)=\lambda x+2\lambda,  f_{3}(x)=\lambda x+3\lambda-\lambda^2,f_4(x)=\lambda x+1-\lambda\right\}.$$
where $0<\lambda<\dfrac{5-\sqrt{21}}{2}$.

Our main result states that
for any $k\geq 1$,
$\dim_{H}(U_{2^k})=\dim_{H}(U_1)$. Moreover, $U_i=\emptyset, i\neq 2^{s}, s\geq1$, and $U_{\aleph_0}=\emptyset$. 
As a corollary,  we have that
any  $x\in K$ has either exactly $2^k$ expansions, $k\geq 0$,  or has uncountably many expansions. 
This example illustrates the key ideas which can analyze $U_{k}$.

The second application is in the setting of $\beta$-expansions.  We are able to calculate $\dim_{H}(U_{F})$ for some cases for which $\tilde{U}_{F}$ is not a subshift finite type. We first introduce some notation.
Let $\beta\in(1, 2)$ and $x\in \mathcal{A}_{\beta}=[0, (\beta-1)^{-1}]$, we call a sequence $(a_n)_{n=1}^{\infty}\in\{0,1\}^{\mathbb{N}}$ a $\beta$-expansion of $x$ if $$x=\sum_{n=1}^{\infty}\dfrac{a_n}{\beta^n}.$$
Sidorov \cite{Sidorov} proved that  Lebesgue almost every point has uncountable expansions. 
We  use $\tilde{U}_{\beta}$ and $U_{\beta}$ to denote  the  set of unique $\beta$-expansions  and the  corresponding univoque set. The  dynamical approach is an excellent tool which can generate  $\beta$-expansions  effectively. Define $T_0(x)=\beta x, T_1(x)=\beta x-1$, see Figure 1. 
\begin{figure}[h]\label{figure1}
\centering
\begin{tikzpicture}[scale=3]
\draw(0,0)node[below]{\scriptsize 0}--(.382,0)node[below]{\scriptsize$\frac{1}{\beta}$}--(.618,0)node[below]{\scriptsize$\frac{1}{\beta(\beta-1)}$}--(1,0)node[below]{\scriptsize$\frac{1}{\beta-1}$}--(1,1)--(0,1)node[left]{\scriptsize$\frac{1}{\beta-1}$}--(0,.5)--(0,0);
\draw[dotted](.382,0)--(.382,1)(0.618,0)--(0.618,1);
\draw[thick](0,0)--(0.618,1)(.382,0)--(1,1);
\end{tikzpicture}\caption{The dynamical system for $\{T_{0},\,T_{1}\}$}
\end{figure}
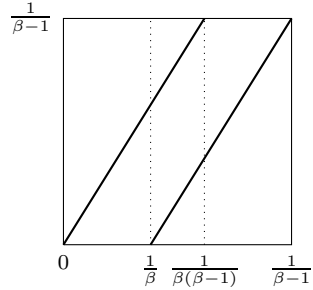
All possible  $\beta$-expansions can be generated via these two maps, see \cite{KarmaCor, KM}. 
We call the common domain of $T_0$ and $T_1$, i.e. $[\beta^{-1}, \beta^{-1}(\beta-1)^{-1}]$,  the switch region.
Much work has been done regarding $U_{\beta}$, see for example \cite{GS,MK}.  
One of the  motivations of this paper is to  continue the investigation of  the Hausdorff dimension of $U_{\beta}$ from the dynamical point of view.  In \cite{SKK}, it is proved that if the greedy orbit of $1$ hits $( \beta^{-1}, \beta^{-1}(\beta-1)^{-1})$, then $\tilde{U}_{\beta}$ is a subshift of finite type.  Schmeling \cite{Schmeling} proved that for almost every $\beta\in(1,2)$, the greedy orbit of $1$ is dense, which implies that for almost every $\beta$, $\tilde{U}_{\beta}$ is a subshift of finite type. 
 Hence, we can take advantage of Mauldin and Williams' result  to calculate $\dim_{H}(U_{\beta})$.
On the other hand, if $\beta$ is a Pisot number, then the greedy orbit of $1$ may not hit the interior of the switch region. In this case, the idea in \cite{SKK} cannot be implemented, and we have to find a new approach.

De Vries and  Komornik proved in \cite{MK} that $\tilde{U}_{\beta}$ is a subshift of finite type  if and only if $\beta\in (1,2)\setminus \overline{\mathcal{U}}$, where $\mathcal{U}$ is the set of $\beta$ for which the $\beta$-expansion of 1 is unique.  This result allowed them to calculate the Hausdorff dimension of $U_{\beta}$ if $\tilde{U}_{\beta}$ is a subshift of finite type. In this paper, we  give a necessary and sufficient condition which can characterize when $\tilde{U}_{\beta}$ is a subshift of finite type in terms of the quasi-greedy orbit of $1$, see Theorem \ref{Dynamical}. Using this result  together with \cite[Theorem 2.4]{SKK}, we can give an algorithm to find $\dim_{H}(U_{\beta})$ in this case.  In some cases when the greedy orbit of $1$ is  eventually periodic, we are able to calculate $\dim_{H}(U_{\beta})$ even when $\tilde{U}_{\beta}$ is not a subshift of finite type. Moreover, we prove that $\tilde{U}_{\beta}$ is a subshift of finite type if and only if it is a sofic shift. 
As such we  generalize de Vries and Komornik's result  concerning the calculation of  $\dim_{H}(U_{\beta})$. 

Some dimensional problems  in the open dynamical systems are involved in the  third application of our main result. We only study the doubling map with holes. Let $T(x)=2 x \mod 1$ be the doubling map defined on $[0,1)$. Given any $(a,b)\subset [0,1)$, referred to as a hole, let 
$$J(a,b)=\{x\in [0,1):T^{n}(x)\notin (a,b), \forall\, n\geq 0\}.$$
Glendinning and Sidorov \cite{GGG} gave a complete description of $J(a,b)$. More precisely,  given any  hole $(a,b)\subset (0,1)$ they can characterize when $J(a,b)$ is  of zero or positive Hausdorff dimension.  Clark \cite{Clark}
made use of the same techniques and  obtained similar results when $T$ is the greedy  map. 
However, both of these papers do not offer an approach to finding the exact Hausdorff dimension of $ J(a,b)$. The main idea of this paper  allows us to calculate the Hausdorff dimension of the attractor in certain cases. 
Moreover, we partially answer  Alcaraz Barrera's question \cite{Barrera}, i.e. we prove that if the orbits of $a$ and $b$ are  eventually periodic,  and the irreducibility condition is satisfied, no matter where the hole is located,    $J(a,b)$ is  measure theoretically isomorphic  to a subshift of finite type. 

This paper is organized as follows. In section 2, we state our main results and give the proofs. We also investigate an example concerning the  Hausdorff dimension of $U_k$. 
 In section 3 and section 4,  we study the case of  $\beta$-expansions. Firstly  we give, from dynamical perspective, an equivalent  statement of a result of  de Vries and Komornik \cite[Theorem 1.8]{MK}.
Then we implement similar idea which is utilized in section 2, and calculate $\dim_{H}(U_{\beta})$  when the  greedy orbit of $1$ is eventually periodic. In section 5, we  partially answer one problem posed by  Alcaraz Barrera \cite{Barrera}.  
 
\section{Hausdorff dimension of $K$ and $U_{F}$}
\subsection{Dimension of $K$}
In this section we shall state the main results of our paper. To begin with, we introduce some basic notation. 
Define $T_{j}(x):=f_{j}^{-1}(x)=(x-a_{j})r_{j}^{-1}$ for $x\in f_j(K)$ and $1\leq j\leq m$, where 
$f_{i}(x)=r_j x+a_j, 1\leq j\leq m$. 
We denote the concatenation $T_{i_{n}}\circ \ldots \circ T_{i_{1}}(x)$ by $T_{i_1\ldots i_n}(x)$. The following lemma was proved in \cite[Lemma 2.1]{SKK}. The main hypothesis in \cite[Lemma 2.1]{SKK} is that $K$ is an interval. We emphasize that all the self-similar sets in this paper are not necessarily  intervals. The following lemma is still true if $K$ is a general self-similar set.
\begin{Lemma}
\label{Dynamical lemma}
Let $x\in K.$ Then $(i_{n})_{n=1}^{\infty}\in\{1,\ldots,m\}^{\mathbb{N}}$ is a coding for $x$ if and only if $T_{i_{1}\ldots i_{n}}(x)\in K$ for all $n\in\mathbb{N}.$
\end{Lemma}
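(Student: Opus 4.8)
The plan is to reformulate the backward-orbit condition as a forward covering condition and then invoke the standard contraction estimates for the IFS. First I would record the algebraic identity underlying the statement: since each $T_j=f_j^{-1}$ extends to an affine bijection of $\mathbb{R}$, the composition $T_{i_1\ldots i_n}=T_{i_n}\circ\cdots\circ T_{i_1}$ is precisely the inverse of $f_{i_1}\circ\cdots\circ f_{i_n}$. Consequently, for each fixed $n$,
\[
T_{i_1\ldots i_n}(x)\in K \iff x\in f_{i_1}\circ\cdots\circ f_{i_n}(K).
\]
Writing $K_n:=f_{i_1}\circ\cdots\circ f_{i_n}(K)$, the claim becomes: $(i_n)$ is a coding of $x$ if and only if $x\in\bigcap_{n\ge 1}K_n$.

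Next I would establish the geometric properties of the sets $K_n$. Since $K=\bigcup_{i=1}^m f_i(K)\supseteq f_{i_{n+1}}(K)$, applying $f_{i_1}\circ\cdots\circ f_{i_n}$ gives $K_{n+1}\subseteq K_n$, so $(K_n)$ is a decreasing sequence of nonempty compact sets. Because $f_{i_1}\circ\cdots\circ f_{i_n}$ has linear part $r_{i_1}\cdots r_{i_n}$ with $|r_i|<1$, we have $\mathrm{diam}(K_n)=|r_{i_1}\cdots r_{i_n}|\,\mathrm{diam}(K)\to 0$. By the nested compact sets theorem, $\bigcap_{n\ge 1}K_n$ is therefore a single point.

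It then remains to identify this point with the coding limit. For any $y\in K$ the point $f_{i_1}\circ\cdots\circ f_{i_n}(y)$ lies in $K_n$, and
\[
\bigl|f_{i_1}\circ\cdots\circ f_{i_n}(0)-f_{i_1}\circ\cdots\circ f_{i_n}(y)\bigr|=|r_{i_1}\cdots r_{i_n}|\,|y|\to 0,
\]
so $\lim_{n\to\infty}f_{i_1}\circ\cdots\circ f_{i_n}(0)$ exists and equals the unique element of $\bigcap_n K_n$. This settles both implications at once: if $(i_n)$ is a coding then $x=\lim f_{i_1}\circ\cdots\circ f_{i_n}(0)\in\bigcap_n K_n$, whence $T_{i_1\ldots i_n}(x)\in K$ for every $n$; conversely, if $T_{i_1\ldots i_n}(x)\in K$ for all $n$ then $x\in\bigcap_n K_n$, which is exactly the coding limit.

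The one genuine subtlety, and the step I would treat most carefully, concerns the domains: each $T_j$ is a priori only defined on $f_j(K)$, so the hypothesis ``$T_{i_1\ldots i_n}(x)\in K$ for all $n$'' carries the implicit requirement that every intermediate iterate lands in the domain of the next map. This causes no trouble, because $x\in K_n$ forces the partial iterate $T_{i_1\ldots i_k}(x)$ to lie in $f_{i_{k+1}}\circ\cdots\circ f_{i_n}(K)\subseteq f_{i_{k+1}}(K)$, which is exactly the domain of $T_{i_{k+1}}$; hence the restricted-domain reading and the reading via the globally defined affine inverses coincide. I would make this compatibility explicit so that the reformulation in the first paragraph is fully justified.
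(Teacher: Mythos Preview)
Your argument is correct and complete. The paper does not supply its own proof of this lemma; it simply cites \cite[Lemma 2.1]{SKK} (where $K$ is assumed to be an interval) and asserts that the statement remains valid for general self-similar sets, so there is nothing substantive to compare against. Your reformulation $T_{i_1\ldots i_n}(x)\in K \Longleftrightarrow x\in f_{i_1}\circ\cdots\circ f_{i_n}(K)$ together with the nested-compact-sets argument is exactly the standard route, and your final paragraph handling the domain compatibility is a nice touch that makes the argument self-contained in the generality the paper requires.
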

Motivated by Lemma \ref{Dynamical lemma}, we may define the orbits of the points of $K$.
\setcounter{Definition}{1}
\begin{Definition}\label{Orbit set}
Let $x\in K$ with  a coding  $(i_n)_{n=1}^{\infty}$, we call  the set $$\{T_{i_{1}\ldots i_{n}}(x): n\geq 0\}$$ an  \textbf{orbit} set of  $x$, where $T_{i_0}(x)=x.$
\end{Definition}
It is easy to see that for different codings, the orbits of $x$ may be distinct.
In terms of Lemma \ref{Dynamical lemma}, we have the following proposition which 
  is a straightforward consequence. 
\setcounter{Proposition}{2}
\begin{Proposition}
\label{non unique prop}
Let $x\in K$.  There exist $(i_{n})_{n=1}^{N}\in\{1,\ldots,m\}^{N}$ and distinct $k,l\in \{1,\ldots,m\}$ satisfying $ T_{i_{1}\cdots i_{N}k}(x)\in K$ and $T_{i_{1}\cdots i_{N}l}(x)\in K$ if and only if  $x\notin U_{F}.$
\end{Proposition}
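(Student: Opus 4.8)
The plan is to derive both implications directly from Lemma \ref{Dynamical lemma}, which says that $(i_n)$ codes $x$ precisely when the orbit $T_{i_1\cdots i_n}(x)$ stays inside $K$ for every $n$. Thus ``$x$ has more than one coding'' should translate into ``two admissible orbits of $x$ that branch at some finite stage,'' and the content of the proposition is exactly that such a branching can always be witnessed by a single extra symbol, i.e. by a common admissible prefix $i_1\cdots i_N$ followed by two distinct letters $k\neq l$.

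For the backward direction I would start from two distinct codings $(a_n)$ and $(b_n)$ of a point $x\notin U_F$, and let $N\geq 0$ be the length of their maximal common prefix, so that they first disagree at position $N+1$. Putting $i_n:=a_n=b_n$ for $n\leq N$, $k:=a_{N+1}$ and $l:=b_{N+1}$, the distinctness of the codings forces $k\neq l$. Applying Lemma \ref{Dynamical lemma} to each of the two codings at stage $N+1$ then yields $T_{i_1\cdots i_N k}(x)\in K$ and $T_{i_1\cdots i_N l}(x)\in K$ simultaneously. This direction is essentially immediate; the only thing to monitor is the degenerate case $N=0$, where the common prefix is empty and one simply obtains $T_k(x),T_l(x)\in K$, which is still covered by the statement since $(i_n)_{n=1}^{0}$ is the empty word.

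For the forward direction I would set $z:=T_{i_1\cdots i_N}(x)$, so that the hypotheses read $T_k(z)\in K$ and $T_l(z)\in K$, and then promote the two finite admissible words $i_1\cdots i_N k$ and $i_1\cdots i_N l$ to genuine infinite codings. The key step is to invoke the standing fact from the introduction that every point of $K$ has at least one coding: I pick a coding $(c_n)$ of $T_k(z)$ and a coding $(d_n)$ of $T_l(z)$, and concatenate to form
\[
\alpha=(i_1,\ldots,i_N,k,c_1,c_2,\ldots),\qquad \beta=(i_1,\ldots,i_N,l,d_1,d_2,\ldots).
\]
One then checks via Lemma \ref{Dynamical lemma} that both $\alpha$ and $\beta$ are codings of $x$: the orbit along the common prefix lies in $K$, the next step lands in $K$ by hypothesis, and the tails keep the orbit in $K$ because $(c_n)$ and $(d_n)$ code $T_k(z)$ and $T_l(z)$ respectively. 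Since $\alpha$ and $\beta$ differ already at coordinate $N+1$, the point $x$ has two distinct codings, hence $x\notin U_F$.

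Overall this is a bookkeeping consequence of Lemma \ref{Dynamical lemma}, so I do not expect a serious obstacle. The one point needing genuine, if small, care lies in the forward direction: I must verify that concatenating an admissible finite prefix with a coding of the endpoint it reaches is again admissible, and, relatedly, that the mere existence of the composition $T_{i_1\cdots i_N k}(x)$ already forces each intermediate point $T_{i_1\cdots i_n}(x)$ to lie in $K$, since every $T_j$ is defined only on $f_j(K)\subseteq K$ and maps it back into $K$.
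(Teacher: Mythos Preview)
Your proposal is correct and follows exactly the route the paper has in mind: the paper does not write out a proof at all, stating only that the proposition is ``a straightforward consequence'' of Lemma~\ref{Dynamical lemma}, and your argument is precisely the natural unpacking of that remark. The only refinement worth noting is your justification in the forward direction that the intermediate points $T_{i_1\cdots i_n}(x)$ lie in $K$: rather than appealing to the domain of each $T_j$, it is cleaner to observe that $T_{i_1\cdots i_N k}(x)\in K$ means $x=f_{i_1}\circ\cdots\circ f_{i_N}\circ f_k\bigl(T_{i_1\cdots i_N k}(x)\bigr)$, whence each $T_{i_1\cdots i_n}(x)=f_{i_{n+1}}\circ\cdots\circ f_{i_N}\circ f_k\bigl(T_{i_1\cdots i_N k}(x)\bigr)\in K$ by forward invariance of $K$ under the $f_j$'s---this works regardless of whether one views the $T_j$ as extended to all of $\mathbb{R}$.
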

Let $I_{j}=f_{j}(K), 1\leq j\leq m$.  The following reformulation of $U_{F}$ is a consequence of Proposition \ref{non unique prop}:
\begin{equation}
\label{univoque reformulation111}
U_{F}=\Big\{x\in K:  \textrm{ any orbit of  } x \textrm{ does not hit }\bigcup_{k\neq l} (I_{k}\cap I_{l})\Big\}.
\end{equation}
I.e., if $x\in U_F$, then  for any orbit of  $x$, denoted by  $\{T_{i_{1}\cdots i_{n}}(x):n\geq 0\}$,  $$T_{i_{1}\cdots i_{n}}(x)\cap( \bigcup_{k\neq l} (I_{k}\cap I_{l}))=\emptyset$$ holds for any $n\geq 0.$ In other words, if $x\in U_F$, then  any orbit of $x$ never falls into $\bigcup_{k\neq l} (I_{k}\cap I_{l}).$
Now we give some important definitions. 
Let $A\subset \mathbb{R}$ be a bounded set. We call the minimal and maximal elements of $conv(A)$  the endpoints of $A$, where $conv(A)$ denotes the convex hull of $A$.  
\setcounter{Definition}{3}
\begin{Definition}\label{Periodic orbits}
Let $a_1<a_2<\cdots<a_{2m}$ be  the endpoints of $\{f_j(K)\}_{j=1}^{m}$.  We say $a_i$ is a  \textbf{periodic point} if 
there exists a uniform constant $M>0$ such that for any orbit set of $a_i$, say $F$, the cardinality of $F$ is less than $M$. 
\end{Definition}
\begin{Definition}\label{exact overlap}
We call $f_{i_1\cdots i_n}(K)$ an \textbf{exact overlap} if there exists a different $$(j_1\cdots j_t)\in \{1,2,3\cdots m\}^{t}$$ for some $t\geq 1$ such that $f_{i_1\cdots i_n}(K)=f_{j_1\cdots j_t}(K)$.
\end{Definition}
\begin{Definition}\label{exact overlapping IFS}
We say  an IFS is an \textbf{exactly overlapping IFS}, if for any $f_i(K)\cap f_j(K)\neq \emptyset$, $i\neq j$,  $f_i(K)\cap f_j(K)$ is the union of some exact overlaps. More precisely, there exist $\mathbf{t}_q\in \{1,2\cdots,m\}^{k_q}, 1\leq q\leq p$  such that $f_{i}(K)\cap  f_{j}(K)=  \cup_{q=1}^{p} f_{\mathbf{t}_q}(K)$, where each $f_{\mathbf{t}_q}(K)=f_{i\,i_2i_3\cdots i_{k_q}}(K)=f_{j\,j_2j_3\cdots j_r}(K)$ for some $r$. 
\end{Definition}
Now we give an example to classify the above definitions.  The following example is from \cite{NW}.
\setcounter{Example}{6}
\begin{Example}\label{EX11}
Let $K$ be the self-similar set of the following IFS:
$$\left\{f_1(x)=\dfrac{x}{3},\, f_2(x)=\dfrac{x}{9}+\dfrac{8}{27},\, f_3(x)=\dfrac{x+2}{3}\right\}.$$
Set $J=[0,1]$. It is easy to check that  $f_{1}(J)=\left[0,\dfrac{1}{3}\right],f_{2}(J)=\left[\dfrac{8}{27},\dfrac{11}{27} \right],f_{3}(J)=\left[\dfrac{2}{3},1\right],$ and that
$$f_{1}(K)\cap f_{2}(K)\neq \emptyset, f_{2}(K)\cap f_{3}(K)= \emptyset, f_{1}(K)\cap f_{3}(K)= \emptyset,$$ see the following figure.

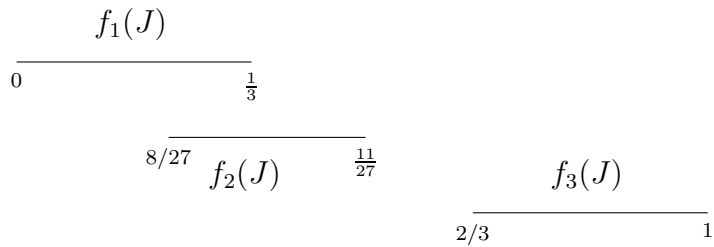
\begin{figure}[h]\label{figure1}
\centering
\begin{tikzpicture}[scale=5]
\draw(0,0)node[below]{\scriptsize $0$}--(.618,0)node[below]{\scriptsize$\frac{1}{3}$};
\draw(0.4,-0.2)node[below]{\scriptsize $8/27$}--(.918,-0.2)node[below]{\scriptsize$\frac{11}{27}$};
\draw(1.2,-0.4)node[below]{\scriptsize $2/3$}--(1.818,-0.4)node[below]{\scriptsize$1$};
\node [label={[xshift=1.5cm, yshift=0cm]$f_1(J)$}] {};
\node [label={[xshift=3cm, yshift=-2cm]$f_2(J)$}] {};
\node [label={[xshift=7.5cm, yshift=-2cm]$f_3(J)$}] {};
\end{tikzpicture}\caption{First iteration}
\end{figure}

Note that  $f_1(K)\cap f_2(K)=f_{1331}(K)\cup f_{1332}(K)\cup f_{1333}(K),$
where $$f_{1331}(K)=f_{211}(K), f_{1332}(K)=f_{212}(K), f_{1333}(K)=f_{213}(K), $$  i.e. $f_1(K)\cap f_2(K)$ is the union of some exact overlaps.
Hence the IFS is  exactly overlapping. 
\end{Example}
Suppose  the endpoints of $f_i(K),  1\leq i\leq m$, are periodic. 
Denote all these endpoints by 
$\mathcal{E}=\{a_1< a_2<a_{3}<\cdots <a_{2m}\}.$
Assume $I$ is the convex hull of $K$. We say $a_j$ and $ a_{j+1}$ are \emph{admissible pair} if there exists some $1\leq k\leq m$ such that 
$a_j, a_{j+1}\in f_{k}(I)$. 
It is easy to see that  for any  admissible pair $a_j, a_{j+1}$, there exists at least one $k$ such that $a_j, a_{j+1}\in f_{k}(I)$. Denote the smallest $k$ by $\alpha(k)$.  The following definition is motivated by the lazy map \cite{EJK}. 
\setcounter{Definition}{7}
\begin{Definition}\label{Def:28}
Let $F=\{f_i\}_{i=1}^{m}$ be an exactly overlapping IFS. 
Suppose  the endpoints of $f_i(K),  1\leq i\leq m$, are periodic. 
We define the lazy algorithm for this system as follows:
for any  admissible pair $a_j, a_{j+1}$  and any 
$x\in [a_j, a_{j+1}]\cap K,$
we implement $T_{\alpha(k)}$ on $x$. The orbit of $x$ generated by this algorithm is called the \textbf{lazy orbit}. 
 \end{Definition}
 We use Example \ref{EX11} to explain this definition. 
 Note that 
 $$T_1(x)=3x, T_2(x)=9x-8/3, T_3(x)=3x-2,$$
 and the endpoints of $\{f_i(K)\}_{i=1}^{3}$ are $\mathcal{E}=\{0,8/27, 1/3, 11/27, 2/3, 1\}$.
  The point $0$ has a unique coding, and therefore it has only one orbit. 
 Similarly, $11/27, 2/3, 1$ also have unique orbits. 
 It can be checked directly that all possible orbits of $8/27$ hit 
 $\{0,8/27, 2/3,8/9\}$ and that  all the orbits of $1/3$ hit $\{0, 1/3,1\}$. 
Note that $1/3$ is the right endpoint of $f_{1}(K)\cap f_2(K)$,  for any points  in 
$[8/27, 1/3]\cap K$, we can implement the expanding maps $T_1$ or $T_2$. 
However, according to our  lazy algorithm, we only choose $T_1$. For the points of 
$[1/3, 11/27]\cap K$, we can only choose $T_2$.  The pair $11/27$ and $2/3$  is not  admissible as there is no $1\leq k\leq 3$ such that $11/27, 2/3\in f_{k}(J).$

In the remaining part of this section,  we assume the IFS to be  exactly overlapping, and the orbits of  the endpoints of 
$\{f_i(K)\}_{i=1}^{m}$ are periodic. Moreover, in order to simplify the calculation, we  always choose the lazy algorithm which is defined in Definition \ref{Def:28}.
We assume the  periodic orbits of $\{a_1,a_2,\cdots, a_{2m}\}$ are 
$\{b_1,b_2,\cdots, b_{s+1}\}$,
i.e. 
$$K=\bigcup_{i=1}^{s}( [b_i, b_{i+1}]\cap K).$$
Generally,  we only know the orbit set $\{b_1,b_2,\cdots, b_{s+1}\}$ is finite. In other words, we may not know the exact relation between $2m$ and $s+1$. 
As  the endpoints of each $f_i(K)$ are periodic, 
for any $1\leq i\leq m$ and  $1\leq j\leq s$, we can find some $k$ such that 
$$T_i(A_j)=A_{i_1}\cup A_{i_2}\cup \cdots\cup A_{i_k},$$ 
where $A_j=[b_j, b_{j+1}]$. Hence,  the collection $\{B_i= [b_i, b_{i+1}]\cap K: 1\leq i\leq s\}$ is a Markov partition of $K$ (\cite{app}).
For some $j$, $(b_j, b_{j+1})\cap K$ may be empty,  we delete $B_j$ from the partition. 
We suppose without loss of  generality that   
$$K=\cup_{j=1}^{s}B_j,$$
and that $(b_j, b_{j+1})\cap K\not= \emptyset$ for all $j=1,\cdots, s$.
We call the sets $B_j=A_j\cap K, 1\leq j\leq s$ \emph{ the  blocks of the Markov partition}. We point out here that for the original definition of $T_i,1\leq i\leq m$, the domains are  $f_i(K)$. However, since $\{T_i \}_{i=1}^{m}$ are linear maps, we can extend the domains to $\mathbb{R}$.

%We give the explanation of the necessity of the exact overlapping IFS.  For a general self-similar set $K$, if for some $f_i(K)\cap f_j(K)\neq\emptyset, %i<j$, we may find some point $x\in (conv(f_i(K)\cap f_j(K)))\cap K$ such that $x\in f_j(K)\setminus(f_i(K))$. However, we choose the lazy algorithm, i.e. %we only consider the lazy orbit of $x$. 
%In other words, we should implement $T_i$ on $x$ as $i<j$. However, $T_i(x)\notin K$ due to $x\notin f_i(K)$. Hence, we may not find the lazy orbit %of $x$. 
%Nevertheless,  if  $ f_i(K)\cap f_j(K)$ is the union of some exact overlaps, by  Definition \ref{exact overlapping IFS}, there exist $\mathbf{t}_q\in %\{1,2\cdots,m\}^{k_q}, 1\leq q\leq p$,  such that $f_{i}(K)\cap  f_{j}(K)=  \cup_{q=1}^{p} f_{\mathbf{t}_q}(K)$, where each $f_{\mathbf{t}_q}%(K)=f_{i\,i_2i_3\cdots i_{k_q}}(K)=f_{j\,j_2j_3\cdots j_r}(K)$ for some $r$.  For any $x\in f_{i}(K)\cap  f_{j}(K)$, we can find some exact %overlap $f_{\mathbf{t}_q}(K)=f_{i\,i_2i_3\cdots i_{k_q}}(K)=f_{j\,j_2j_3\cdots j_r}(K)$  having $x\in f_{i\,i_2i_3\cdots i_{k_q}}(K)=f_{j\,j_2j_3\cdots j_r}%(K)$. Hence,  $T_i(x)$ and $T_j(x)$ are still in $K$. We choose the lazy algorithm, i.e. we implement $T_i$ on $x$ rather than $T_j$ if $i<j$. 
%  Therefore,  according to analysis above, the assumption, i.e. the exact overlapping IFS,  is necessary. 

The  following lemma is important as it allows us to  find the Markov partition of $K$. 
\setcounter{Lemma}{8}
\begin{Lemma}\label{Partition}
Let $\{f_i\}_{i=1}^{m}$ be an exactly overlapping IFS. 
Suppose  that the endpoints of each $f_i(K),1\leq i\leq m,$ are periodic. Then, for any $1\leq i\leq m$ and $1\leq j\leq s$, there exists some $k$ such that 
$$T_i(A_j\cap K)=(A_{i_1}\cap K)\cup (A_{i_2}\cap K)\cup \cdots\cup( A_{i_k}\cap K),$$ where
$$T_i(A_j)=A_{i_1}\cup A_{i_2}\cup \cdots\cup A_{i_k}.$$
\end{Lemma}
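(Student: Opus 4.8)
The plan is to split the statement into an interval-level Markov property, that $T_i(A_j)$ is a union of consecutive partition intervals, and the same equality after intersecting with $K$. Throughout I write $\mathcal{B}=\{b_1,\dots,b_{s+1}\}$ for the breakpoints and $I$ for the convex hull of $K$, and recall that $T_i=f_i^{-1}$ is an affine bijection of $\mathbb{R}$ carrying $f_i(I)$ onto $I$. Since $\mathcal{B}$ is by construction the (finite) union of the orbit sets of the endpoints $a_1,\dots,a_{2m}$, and these endpoints are exactly the two endpoints of each interval $f_\ell(I)$, no breakpoint other than $b_j,b_{j+1}$ lies in the open interval $(b_j,b_{j+1})$; in particular each $f_\ell(I)$ either contains $A_j$ or meets its interior in the empty set. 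I treat only the pairs $(i,j)$ for which $T_i$ is actually applied to the block, that is $A_j\subseteq f_i(I)$, so that $T_i(A_j)\subseteq I$ and the asserted decomposition makes sense.

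The first key step is the forward-invariance of $\mathcal{B}$: if $b\in\mathcal{B}$ and $T_i(b)\in K$, then $T_i(b)\in\mathcal{B}$. Indeed $b=T_{c_1\cdots c_n}(a)$ for some endpoint $a$ and some coding $(c_n)$, so by Lemma \ref{Dynamical lemma} all of $a,T_{c_1}(a),\dots,b$ lie in $K$; since $T_i(b)\in K$ it possesses a coding, and prepending $c_1\cdots c_n i$ to it gives, again by Lemma \ref{Dynamical lemma}, a coding of $a$ whose orbit contains $T_i(b)$, so $T_i(b)\in\mathcal{B}$ by Definition \ref{Orbit set} and the periodicity hypothesis of Definition \ref{Periodic orbits}. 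Granting for the moment that $A_j\cap K\subseteq f_i(K)$ (established below), the endpoints $b_j,b_{j+1}\in A_j\cap K$ lie in $f_i(K)$, hence $T_i(b_j),T_i(b_{j+1})\in K\cap I$ and therefore in $\mathcal{B}$. As $T_i$ is affine and monotone, $T_i(A_j)$ is the interval with endpoints $T_i(b_j),T_i(b_{j+1})$ taken in the appropriate order; being an interval of $I$ with both endpoints in $\mathcal{B}$, it is exactly a union $A_{i_1}\cup\cdots\cup A_{i_k}$ of consecutive blocks, which is the interval-level equation.

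It then remains to pass to $K$. The inclusion $\supseteq$ needs no overlap hypothesis: if $y\in A_{i_\ell}\cap K$, set $x=f_i(y)\in f_i(K)\subseteq K$; applying $f_i$ to $T_i(A_j)=\bigcup_\ell A_{i_\ell}$ yields $A_j=\bigcup_\ell f_i(A_{i_\ell})$, so $x\in A_j\cap K$ with $T_i(x)=y$, giving $y\in T_i(A_j\cap K)$. For $\subseteq$, take $x\in A_j\cap K$; once we know $x\in f_i(K)$ we obtain $T_i(x)\in K$ together with $T_i(x)\in T_i(A_j)=\bigcup_\ell A_{i_\ell}$, whence $T_i(x)\in\bigcup_\ell(A_{i_\ell}\cap K)$, as wanted.

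The main obstacle is thus the clean-overlap property $A_j\cap K\subseteq f_i(K)$ whenever $A_j\subseteq f_i(I)$, and this is precisely where Definition \ref{exact overlapping IFS} enters. By the remark of the first paragraph, the only pieces meeting the interior of $A_j$ are those $f_\ell(K)$ with $A_j\subseteq f_\ell(I)$, so $A_j\cap K=\bigcup_{\ell:\,A_j\subseteq f_\ell(I)}(A_j\cap f_\ell(K))$. If $i$ is the sole such index the claim is immediate; otherwise $A_j$ lies in an overlap region $f_i(I)\cap f_\ell(I)$, and I would argue that on this region the two pieces coincide with their exact overlap, i.e. $A_j\cap f_\ell(K)=A_j\cap f_i(K)$, using that $f_i(K)\cap f_\ell(K)$ is a union of exact overlaps $f_{\mathbf{t}_q}(K)=f_{i\cdots}(K)=f_{\ell\cdots}(K)$ that fill out $f_i(I)\cap f_\ell(I)\cap K$. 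Every contribution $A_j\cap f_\ell(K)$ then equals $A_j\cap f_i(K)\subseteq f_i(K)$, giving $A_j\cap K\subseteq f_i(K)$ and closing the argument. Verifying that the exact overlaps genuinely exhaust $K$ inside each overlap block, leaving no stray points coming from the self-similar structure, is the delicate point where I expect the real work to lie.
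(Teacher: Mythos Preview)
Your approach is essentially the same as the paper's, but considerably more careful. The paper's proof is the four-line calculation
\[
T_i(A_j\cap K)=T_i(A_j)\cap T_i(K)=\Bigl(\bigcup_\ell A_{i_\ell}\Bigr)\cap T_i(K)=\bigcup_\ell\bigl(A_{i_\ell}\cap T_i(K)\bigr)=\bigcup_\ell\bigl(A_{i_\ell}\cap K\bigr),
\]
where the interval-level identity $T_i(A_j)=\bigcup_\ell A_{i_\ell}$ is simply taken as given (from periodicity of the breakpoints), and the last equality is asserted as the ``fact'' that $A_{i_\ell}\cap T_i(K)=A_{i_\ell}\cap K$. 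No further justification is offered.

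Your clean-overlap property $A_j\cap K\subseteq f_i(K)$ is exactly equivalent to the paper's asserted fact: if $y\in A_{i_\ell}\cap T_i(K)$ then $f_i(y)\in A_j\cap K$, and $f_i(y)\in f_i(K)$ gives $y\in K$; conversely the paper's fact applied to $T_i(x)$ recovers yours. So you have correctly isolated the crux that the paper sweeps under the rug, and you rightly recognise that this is where the exactly-overlapping hypothesis must do work. Your forward-invariance argument for $\mathcal B$ is also a genuine addition, since the paper treats the interval-level Markov property as self-evident. In short, you are not missing anything the paper provides; if anything, the ``delicate point'' you flag at the end is left equally open in the paper's own proof.
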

\begin{proof}
\begin{eqnarray*}
T_i(A_j\cap K )&=& T_i(A_j)\cap T_i(K)\\&=
&(A_{i_1}\cup A_{i_2}\cup \cdots\cup A_{i_k})\cap T_i(K)\\&=& (A_{i_1}\cap T_i(K))\cup (A_{i_2}\cap T_i(K)) \cup \cdots\cup (A_{i_k}\cap T_i(K))
\\&=&(A_{i_1}\cap K)\cup (A_{i_2}\cap K)\cup \cdots\cup( A_{i_k}\cap K)
\end{eqnarray*}
The first equality holds as  each $T_i$ is a bijection. The last equality  is due to the fact that for any $1\leq i\leq m$, $A_{i_j}\cap T_i(K)=A_{i_j}\cap K, 1\leq j\leq k$.
\end{proof}
 We denote  by $S$  the adjacency matrix of the Markov partition $\{B_i= [b_i, b_{i+1}]\cap K: 1\leq i\leq s\}$.
 
We exhibit  all the definitions above in the following example. 
\setcounter{Example}{9}
\begin{Example}\label{EX:10}
Let $q>\dfrac{3+\sqrt{5}}{2}$ be any real number and $\rho=q^{-1}$.  Consider the IFS
$$\left\{f_{0}(x)=\dfrac{x}{q}, f_1(x)=\dfrac{x+1}{q},f_{q}(x)=\dfrac{x+q}{q}\right\}.$$
The convex hull of $K$ is $E=[0,(1-\rho)^{-1}]$. Note that
$$f_{0}(E)=\left[0, \dfrac{\rho}{1-\rho}\right], f_1(E)=\left[\rho,\dfrac{2\rho-\rho^2}{1-\rho}\right], f_{q}(E)=\left[1, \dfrac{1}{1-\rho}\right].$$
It is easy to check that $f_0(E)\cap f_1(E)\neq \emptyset$ and $f_0(E)\cap f_q(E)=\emptyset, f_1(E)\cap f_q(E)=\emptyset$ as $q>\dfrac{3+\sqrt{5}}{2}$, see Figure 3. 
Then the considered  IFS is exactly overlapping  as we have $f_0(K)\cap f_1(K)= f_0\circ f_q(K)=f_1\circ f_0(K)$.   Moreover, we can check that the endpoints of $f_i(K), i=0,1,q$, hit  finite points. Hence, this IFS satisfies  the setting of our assumption.

\begin{figure}[h]\label{figure1}
\centering
\begin{tikzpicture}[scale=5]
\draw(0,0)node[below]{\scriptsize $0$}--(.618,0)node[below]{\scriptsize$\frac{1}{1-\rho}$};
\draw(0.3,-0.2)node[below]{\scriptsize $\rho$}--(.918,-0.2)node[below]{\scriptsize$\frac{2\rho-\rho^2}{1-\rho}$};
\draw(1.2,-0.4)node[below]{\scriptsize $1$}--(1.818,-0.4)node[below]{\scriptsize$\frac{1}{1-\rho}$};
\node [label={[xshift=1.5cm, yshift=0cm]$f_0(E)$}] {};
\node [label={[xshift=3cm, yshift=-2cm]$f_1(E)$}] {};
\node [label={[xshift=7.5cm, yshift=-2cm]$f_q(E)$}] {};
\end{tikzpicture}\caption{First iteration}
\end{figure}
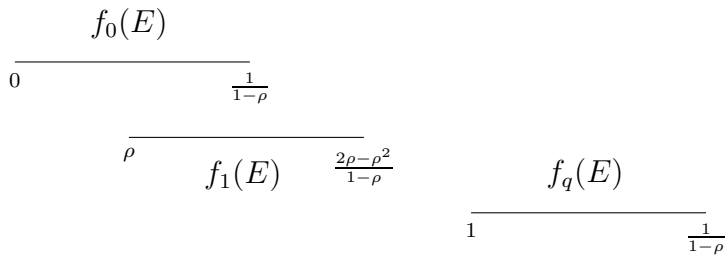

We partition  $K$ via $K=A\cup B \cup C \cup D$
where $$A=\left[0, \rho\right]\cap K, B=\left[\rho,\dfrac{\rho}{1-\rho}\right]\cap K, C=\left[\dfrac{\rho}{1-\rho}, \dfrac{2\rho-\rho^2}{1-\rho}\right]\cap K, D=\left[1,\dfrac{1}{1-\rho}\right]\cap K$$
A simple calculation yields that 
\begin{equation*}\label{MMM}
T_{0}(A)=A\cup B \cup C, T_{0}(B)=D,  T_{1}(C)=C\cup D, T_{q}(D)=A\cup B \cup C \cup D.
\end{equation*}
The adjacency matrix is
 $$
 S=\begin{pmatrix}
  1 & 1 & 1 & 0 \\
 0 & 0 & 0 & 1 \\ 
 0 & 0 & 1 & 1 \\
 1 & 1 & 1 & 1 \\
 \end{pmatrix}
.$$   
\end{Example}       

\medskip
Let $\Sigma$ be the subshift of finite type generated by $S$, i.e.
 $$\Sigma=\{(i_k)_{k=1}^{\infty}\in\{1,2,3,\cdots, s\}^{\mathbb{N}}:S_{i_k,i_{k+1}}=1\,\, \mbox{for any}\,\, k\}.$$
We now state one of the main results of this paper which allows us to calculate the Hausdorff dimension of many  fractal sets.
\setcounter{Theorem}{10}
\begin{Theorem}\label{graph-directed}
Let $K$ be the self-similar attractor of  an exactly overlapping IFS. If the endpoints of each $f_j(K)$ are periodic, then $K$ is a graph-directed self-similar set satisfying the open set condition. In particular, the Hausdorff dimension of $K$ can be calculated explicitly in terms of the adjacency matrix $S$.  Moreover, if the  associated directed graph of $S$ is strongly connected, then  the  associated dimensional Hausdorff measure of $K$ is  positive and finite. 
\end{Theorem}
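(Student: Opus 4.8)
The plan is to exhibit $K$ as the attractor of a Mauldin--Williams graph-directed system whose component sets are exactly the blocks $B_1,\dots,B_s$ of the Markov partition, and then to invoke \cite[Theorem 3]{MW}. I would read Lemma \ref{Partition} in the opposite direction. For each block $B_u$ the lazy algorithm of Definition \ref{Def:28} singles out one expanding map; denote it $T_{i(u)}$. By construction $T_{i(u)}$ is a bijection of $A_u$ onto $\bigcup_{v:\,S_{uv}=1}A_v$, and Lemma \ref{Partition} upgrades this to a bijection of $B_u$ onto $\bigcup_{v:\,S_{uv}=1}B_v$. Writing $\phi_{uv}:=f_{i(u)}=T_{i(u)}^{-1}$, a contracting similarity of ratio $|r_{i(u)}|$, and inverting, I obtain the exact set identity
\[
B_u=\bigcup_{v:\,S_{uv}=1}\phi_{uv}(B_v),\qquad 1\le u\le s .
\]
This is precisely a graph-directed system on the directed graph $G$ with vertex set $\{1,\dots,s\}$ carrying one edge $u\to v$, with similarity $\phi_{uv}$, for each pair with $S_{uv}=1$; its adjacency matrix is $S$. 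Since the induced map on lists $(B_1,\dots,B_s)$ is a contraction in the product of the hyperspaces under the Hausdorff metric, the invariant list is unique, so $K=\bigcup_{u}B_u$ is the graph-directed self-similar set attached to $G$, which is the first assertion.

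Next I would verify the graph-directed open set condition with the natural choice $O_u:=(b_u,b_{u+1})=\operatorname{int}(A_u)$, a nonempty bounded open interval because $(b_u,b_{u+1})\cap K\neq\emptyset$. The interval-level Markov relation $T_{i(u)}(A_u)=\bigcup_{v:\,S_{uv}=1}A_v$ expresses $A_u$ as a tiling by the consecutive subintervals $f_{i(u)}(A_v)$. Passing to interiors gives $\phi_{uv}(O_v)\subseteq O_u$ for every edge $u\to v$, and for fixed $u$ the sets $\phi_{uv}(O_v)$ are pairwise disjoint because they are interiors of subintervals of $A_u$ overlapping only at the partition points $b_j$. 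This is exactly the OSC for $G$.

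With the graph-directed structure and the OSC in hand, \cite[Theorem 3]{MW} applies verbatim. Setting $M_\alpha$ to be the $s\times s$ matrix with entries $(M_\alpha)_{uv}=S_{uv}\,|r_{i(u)}|^{\alpha}$, the Hausdorff dimension $\dim_H K$ is the unique $\alpha$ for which the spectral radius of $M_\alpha$ equals $1$; this is the promised explicit description in terms of $S$. If the directed graph of $S$ is strongly connected, then each $M_\alpha$ is irreducible, and the Mauldin--Williams theorem guarantees that the $\alpha$-dimensional Hausdorff measure of every component $B_u$ is positive and finite; summing over the finitely many blocks yields $0<\mathcal H^{\alpha}(K)<\infty$.

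The only genuinely delicate points, where I would concentrate the argument, are the two set-theoretic steps. The first is that inverting Lemma \ref{Partition} yields the \emph{equality} $B_u=\bigcup_v\phi_{uv}(B_v)$ and not merely an inclusion: this relies on $T_{i(u)}$ being a true bijection on $A_u$ together with the identity $A_{v}\cap T_{i(u)}(K)=A_v\cap K$ established in the proof of Lemma \ref{Partition}. The second is the disjointness needed for the OSC, namely that the subintervals $f_{i(u)}(A_v)$ meet only in the measure-zero set of partition points, so that removing these points with the open sets $O_u$ destroys all overlaps. Everything after these two verifications is a direct citation of Mauldin and Williams.
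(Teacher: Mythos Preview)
Your proposal is correct and follows essentially the same route as the paper: build the graph-directed system on the blocks $B_u$, verify the OSC with the open intervals $O_u=(b_u,b_{u+1})$, and cite Mauldin--Williams. The one minor presentational difference is that the paper separates the identification $K=K^*$ into its own lemma (Lemma~\ref{equal}), proving both inclusions by tracking lazy orbits, whereas you obtain the identity $B_u=\bigcup_v\phi_{uv}(B_v)$ directly from Lemma~\ref{Partition} and then appeal to uniqueness of the invariant list; both arguments are equivalent and your version is arguably tidier.
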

\setcounter{Remark}{11}
\begin{Remark}
We may implement analogous ideas in higher dimensions. 
In \cite[Theorem 1.5]{Hochman} Hochman  proved the following result: 
let the IFS  on  $\mathbb{R} $ defined by algebraic parameters, that is, the contractive ratios and translations are algebraic numbers, there is a dichotomy: either there are exact overlaps (see the Definition \ref{exact overlap}) or the attractor K satisfies 
$\dim_{H}(K)=\min\{1,\dim_{s}(K)\}$, where $\dim_{s}(K)$ is the similarity dimension which is the unique solution $s$ of $\sum_{i=1}^{m}|r_i^{s}|=1$. We will note that our algorithm is effective when  the attractor is  generated by an  exact overlapping IFS, see Example \ref{inhomgeneous}.
\end{Remark}

Before  proving Theorem \ref{graph-directed}, we introduce the notion of a graph-directed self-similar set.  The following definition is from \cite{MW}, and we use the terminology from their paper. A graph-directed construction in $\mathbb{R}$ consists of the following.
\begin{enumerate}
\item A finite union of bounded closed intervals $\cup_{u=1}^{n}J_u$ such that the  interiors of $J_u$ are pairwise disjoint.
\item A directed graph $G=(V,E)$ with vertex set $V=\{1,\ldots,n\}$ and edge set $E.$ Moreover, we assume that for any $u\in V$ there is some $v\in V$ such that $(u,v)\in E.$
\item For each edge $(u,v)\in E$ there exists a similitude $f_{u,v}(x)=r_{uv}x+a_{uv}$, where $r_{uv}\in (0,1)$ and $a_{uv}\in \mathbb{R}$.  Moreover, for each $u \in V$ the set $\{f_{u,v}(J_v^{\circ}):(u,v)\in E\}$ satisfies the open set condition, i.e.,
there exists $n$ open sets $\{J_{u}^{\circ}:u\in V\}$ such that 
\[\bigcup_{(u,v)\in E}f_{u,v}(J_v^{\circ})\subseteq J_u^{\circ},\] and the elements of $\{f_{u,v}(J_v^{\circ}):(u,v)\in E\}$ are pairwise disjoint, where $J^{\circ}$ is the interior of $J$.
\end{enumerate}
The following result is analogous to the self-similar sets,  the detailed proof can be found in \cite[Theorem 1]{MW}. 
\setcounter{Theorem}{12}
\begin{Theorem}\label{Existence}
 For each graph-directed construction, there exists a unique vector of non-empty compact sets $(C_1,\ldots,C_n)$ such that, for each $u\in V$, $C_{u}=\bigcup_{(u,v)\in E}f_{u,v}(C_v)$.
\end{Theorem}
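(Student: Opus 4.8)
The plan is to realize the vector $(C_1,\ldots,C_n)$ as the unique fixed point of a contraction on a complete metric space of tuples of compact sets, exactly mimicking Hutchinson's argument for a single IFS. The open set condition plays no role here (it is needed only for the dimension computation), so I would work with unrestricted compact subsets.

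First I would set up the ambient space. Let $\mathcal{K}$ denote the collection of non-empty compact subsets of $\R$ endowed with the Hausdorff metric $d_H$; it is classical that $(\mathcal{K}, d_H)$ is complete. Form the product $\mathcal{X} = \mathcal{K}^n$ and equip it with $\rho((A_u)_u, (B_u)_u) = \max_{1 \le u \le n} d_H(A_u, B_u)$. As a finite product of complete spaces, $(\mathcal{X}, \rho)$ is complete. Next define $\Phi : \mathcal{X} \to \mathcal{X}$ componentwise by $\Phi((A_1,\ldots,A_n))_u = \bigcup_{(u,v)\in E} f_{u,v}(A_v)$. This is well defined: each $f_{u,v}(A_v)$ is compact as the continuous image of a compact set, a finite union of compact sets is compact, and the union is non-empty because the definition of a graph-directed construction guarantees that every vertex $u$ emits at least one edge. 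A fixed point of $\Phi$ is precisely a vector satisfying $C_u = \bigcup_{(u,v)\in E} f_{u,v}(C_v)$ for all $u$.

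The crux is to show $\Phi$ is a contraction. I would invoke two standard properties of the Hausdorff metric: (i) since each $f_{u,v}$ is a similitude with ratio $r_{uv}$, one has $d_H(f_{u,v}(A), f_{u,v}(B)) = r_{uv}\, d_H(A, B)$; and (ii) the union inequality $d_H\bigl(\bigcup_i A_i, \bigcup_i B_i\bigr) \le \max_i d_H(A_i, B_i)$, valid for any finite family. Combining these gives, for each $u$,
\[
d_H(\Phi(A)_u, \Phi(B)_u) \le \max_{(u,v)\in E} r_{uv}\, d_H(A_v, B_v) \le r\, \rho(A, B),
\]
where $r = \max_{(u,v)\in E} r_{uv} < 1$, since there are finitely many edges and each ratio lies in $(0,1)$. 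Taking the maximum over $u$ yields $\rho(\Phi(A), \Phi(B)) \le r\, \rho(A, B)$.

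Finally, the Banach fixed point theorem applied to $\Phi$ on the complete space $(\mathcal{X}, \rho)$ furnishes a unique fixed point $(C_1,\ldots,C_n)$, which is exactly the asserted vector; existence and uniqueness follow simultaneously. The main obstacle is really just the verification of the union inequality (ii) and the exact scaling (i) under similitudes; both are routine but must be established before the contraction estimate can be invoked. One could optionally observe that iterating $\Phi$ from the starting tuple $(J_1,\ldots,J_n)$ produces a decreasing sequence whose intersection is the fixed point, giving a constructive description of each $C_u$ and confirming $C_u \subseteq J_u$.
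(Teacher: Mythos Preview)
Your proposal is correct and is the standard Hutchinson-type contraction argument on the product of hyperspaces. Note, however, that the paper does not actually prove this theorem: it merely cites \cite[Theorem~1]{MW} and remarks that the result is analogous to the self-similar case. Your argument is essentially the one found in \cite{MW}, so there is nothing to contrast; you have simply supplied the details the paper omits.
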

Let $K^{*}=\cup_{u=1}^{n}C_u$ and call it the graph-directed self-similar set of this construction.
For  each graph-directed construction we can find a weighted adjacency  matrix $A$. This matrix is defined by $A=(r_{u,v})_{(u,v)\in V\times V}$. We let $r_{u,v}=0$ if $(u,v)\notin E$. For each $t\geq 0$ we  can define another weighted adjacency matrix $A^t=(a_{t,u,v})_{(u,v)\in V\times V},$ where $a_{t,u,v}=r_{u,v}^t$.  Let $\Phi(t)$ be the largest nonnegative eigenvalue of $A^t$.
A graph is called  strongly connected if for any two vertices $u,v\in V$, there exists a directed path from $u$ to $v$. A strongly connected component of $G$ is a subgraph $C$ of $G$ such that $C$ is strongly connected, let $SC(G)$ be the set of all the  strongly connected components of $G$.
The following two theorems are the main results of \cite{MW}.
\begin{Theorem}\label{SC}
 For every graph-directed construction such that $G$ is strongly connected, then the Hausdorff dimension of $K^{*}$ is $t_0$, where $t_0$ is the unique solution of the equation $\Phi(t_0)=1$.
\end{Theorem}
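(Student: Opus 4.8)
The plan is to establish the two matching bounds $\dim_H K^*\le t_0$ and $\dim_H K^*\ge t_0$, the former by using the natural path cylinders as efficient covers, the latter by a mass distribution argument built from the Perron--Frobenius eigenvector of $A^{t_0}$. Throughout I write a \emph{path} of length $k$ from $u$ as a vertex sequence $\pi=(u_0,\dots,u_k)$ with $u_0=u$ and $(u_{i-1},u_i)\in E$, set $f_\pi=f_{u_0u_1}\circ\cdots\circ f_{u_{k-1}u_k}$ and $r_\pi=\prod_{i=1}^k r_{u_{i-1}u_i}$, so that iterating $C_u=\bigcup_{(u,v)\in E}f_{u,v}(C_v)$ gives $C_u=\bigcup_{\pi}f_\pi(C_{\mathrm{end}(\pi)})$ over all length-$k$ paths from $u$. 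Since $G$ is strongly connected, $A^t$ is an irreducible nonnegative matrix for every $t\ge 0$, so by Perron--Frobenius $\Phi(t)$ is a simple eigenvalue with strictly positive left and right eigenvectors; moreover $\Phi$ is continuous and strictly decreasing (each entry $r_{u,v}^t$ is strictly decreasing in $t$) with $\Phi(0)\ge 1$ and $\Phi(t)\to 0$, which secures the existence and uniqueness of $t_0$.

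For the upper bound I would use, at level $k$, the family $\{f_\pi(J_{\mathrm{end}(\pi)})\}$ over all length-$k$ paths from $u$ as a cover of $C_u$. Writing $D=\max_v|J_v|$, the $t_0$-cost of this cover is
\begin{equation*}
\sum_{\pi}\bigl(\operatorname{diam}f_\pi(J_{\mathrm{end}(\pi)})\bigr)^{t_0}=\sum_\pi r_\pi^{t_0}|J_{\mathrm{end}(\pi)}|^{t_0}\le D^{t_0}\sum_v\bigl((A^{t_0})^k\bigr)_{u,v},
\end{equation*}
where the last sum is the $u$-th row sum of $(A^{t_0})^k$, since $\bigl((A^{t_0})^k\bigr)_{u,v}=\sum_{\pi:u\to v}r_\pi^{t_0}$. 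Because $\Phi(t_0)=1$ and $A^{t_0}$ is irreducible, its leading eigenvalue is simple and every eigenvalue on the unit circle is semisimple, so the powers $(A^{t_0})^k$ stay bounded and hence the row sums are bounded uniformly in $k$. As the mesh $\max_\pi\operatorname{diam}f_\pi(J_{\mathrm{end}(\pi)})\le (\max_{u,v}r_{u,v})^k D\to 0$, letting $k\to\infty$ yields $\mathcal H^{t_0}(C_u)<\infty$, so $\dim_H K^*\le t_0$.

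For the lower bound I would build a graph-directed measure. Let $(p_u)_{u\in V}$ be the positive right eigenvector of $A^{t_0}$ for eigenvalue $1$, so $\sum_{(u,v)\in E}r_{u,v}^{t_0}p_v=p_u$. A fixed-point argument (the measure analogue of Theorem \ref{Existence}) produces Borel measures $\mu_u$ on $C_u$ with $\mu_u=\sum_{(u,v)\in E}r_{u,v}^{t_0}(f_{u,v})_*\mu_v$ and $\mu_u(C_u)=p_u$; the open set condition guarantees the pieces overlap only on a $\mu_u$-null set, so $\mu_u(f_\pi(C_{\mathrm{end}(\pi)}))=r_\pi^{t_0}p_{\mathrm{end}(\pi)}$. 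I would then verify the mass distribution estimate $\mu_u(B(x,r))\le c\,r^{t_0}$: for each $r$ choose the stopping-time cylinders $f_\pi(C_{\mathrm{end}(\pi)})$ whose generation is the first along each path to have $r_\pi|J_{\mathrm{end}(\pi)}|<r$, so that these cylinders have diameters comparable to $r$ and (by the open set condition, i.e.\ disjoint interiors of the $f_\pi(J_{\mathrm{end}(\pi)})$) a ball of radius $r$ in $\mathbb R$ meets at most a fixed number $N$ of them; summing their masses gives $\mu_u(B(x,r))\le N\max_v p_v\,(r/\min_v|J_v|)^{t_0}$. The mass distribution principle then gives $\mathcal H^{t_0}(C_u)>0$, whence $\dim_H K^*\ge t_0$ and the theorem follows.

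The main obstacle is the lower bound, and specifically the bounded-overlap count $N$ in the mass distribution step: one must convert the open set condition into a uniform geometric packing bound, using that in $\mathbb R$ finitely many intervals with pairwise disjoint interiors and diameters bounded below by a fixed multiple of $r$ can meet a single ball of radius $r$ only boundedly often. The clean organization of this step is what forces the choice of stopping-time cylinders adapted to $r$ rather than cylinders of a fixed combinatorial length, since strong connectedness does not make the ratios $r_{u,v}$ equal and the geometric scales along different paths are genuinely inhomogeneous. The upper bound is comparatively routine once the boundedness of $(A^{t_0})^k$ is extracted from the Perron--Frobenius structure of the irreducible matrix $A^{t_0}$.
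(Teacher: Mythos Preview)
The paper does not give its own proof of this theorem: it states Theorems~\ref{SC} and~\ref{NSC} as quotations of the main results of Mauldin and Williams \cite{MW} and simply cites that reference for the proofs. So there is no in-paper argument to compare yours against.

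Your sketch is essentially the standard Mauldin--Williams argument and is correct in outline. Two minor remarks. First, in the upper bound you invoke semisimplicity of the peripheral spectrum to get boundedness of $(A^{t_0})^k$; a cleaner route is to use the positive right Perron eigenvector $(p_v)$ directly: from $(A^{t_0})^k p=p$ one reads off $\sum_v((A^{t_0})^k)_{u,v}\le p_u/\min_v p_v$, which avoids any spectral subtleties in the imprimitive case. Second, your identification of the main obstacle is apt: the passage from the open set condition to a uniform bounded-overlap constant for the stopping-time cylinders is the genuine content of the lower bound, and in $\mathbb{R}$ it reduces to the elementary fact that a ball of radius $r$ meets at most boundedly many pairwise interior-disjoint intervals of length $\ge cr$. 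With those points made precise, your argument recovers exactly what the paper imports from \cite{MW}.
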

 When the  graph-directed construction $G$ is not strongly connected, we can decompose $G$ into several subgraphs which are each strongly connected. Therefore, the following result still holds. 
 \begin{Theorem}\label{NSC}
If the $G$ in our graph-directed construction is not strongly connected, let $t_1=\max\{t_C: \Phi(t_C)=1,\, C\in SC(G)\}$, then $\dim_{H}(K^{*})=t_1$.
\end{Theorem}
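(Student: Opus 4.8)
The plan is to reduce the non-strongly-connected case entirely to the strongly connected case, Theorem~\ref{SC}, by exploiting the partial-order structure of the strongly connected components together with the countable stability of Hausdorff dimension. Throughout, for a component $C\in SC(G)$ I write $A_C^t$ for the submatrix of $A^t$ indexed by the vertices of $C$ and $\Phi_C$ for its largest nonnegative eigenvalue, so that $t_C$ is characterized by $\Phi_C(t_C)=1$ (this is the meaning of $\Phi(t_C)=1$ in the statement). Keeping only the edges internal to $C$ yields a strongly connected graph-directed construction, and its open set condition is inherited automatically because we use a sub-collection of the original similitudes; by Theorem~\ref{Existence} this restricted construction has a unique attractor $\widetilde{K}_C=\bigcup_{u\in C}\widetilde{C}_u$, whose Hausdorff dimension is exactly $t_C$ by Theorem~\ref{SC}.

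First I would establish the lower bound $\dim_{H}(K^*)\ge t_1$. Choose $C^*\in SC(G)$ with $t_{C^*}=t_1$. The tuple map defining the restricted construction is obtained from the original one by discarding edges, hence it is dominated by the original map and both are monotone contractions in the Hausdorff metric; a standard monotone iteration from the defining intervals $(J_u)$ (for which $\bigcup_{(u,v)\in E}f_{u,v}(J_v)\subseteq J_u$) then forces $\widetilde{C}_u\subseteq C_u$ for every $u\in C^*$. Therefore $\widetilde{K}_{C^*}\subseteq K^*$, and monotonicity of Hausdorff dimension together with Theorem~\ref{SC} gives $\dim_{H}(K^*)\ge\dim_{H}(\widetilde{K}_{C^*})=t_{C^*}=t_1$.

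For the upper bound $\dim_{H}(K^*)\le t_1$ I would use that the condensation of $G$, obtained by collapsing each component to a point, is acyclic: once a directed path leaves a component it can never return. Consequently every infinite admissible path visits only finitely many components and is eventually trapped inside a single component $C$. Coding each point of $K^*$ by such a path, I would write $K^*=\bigcup_{C\in SC(G)}K^*_{C}$, where $K^*_{C}$ collects the points whose coding path is eventually confined to $C$. Each $K^*_{C}$ is covered by a countable family of sets $f_p(\widetilde{C}_u)$, where $p$ ranges over the countably many finite paths ending at a vertex $u\in C$ and $\widetilde{C}_u$ is the tail piece living inside $C$; since each $f_p$ is a similitude, $\dim_{H}(f_p(\widetilde{C}_u))=\dim_{H}(\widetilde{C}_u)\le\dim_{H}(\widetilde{K}_C)=t_C$. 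Countable stability of Hausdorff dimension then yields $\dim_{H}(K^*_{C})\le t_C$, and finite stability over the finitely many components gives $\dim_{H}(K^*)=\max_{C}\dim_{H}(K^*_{C})\le\max_{C}t_C=t_1$.

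The main obstacle is the bookkeeping in the upper bound: I must argue cleanly, from acyclicity of the condensation, that every point admits a coding path eventually trapped in one component, and that the decomposition into finitely many components and countably many entry-paths genuinely covers $K^*$. Trivial components consisting of a single vertex with no internal edge need a separate but easy treatment: they contribute only isolated image points, so one sets $t_C=0$ and they never affect the maximum, except when $G$ has no nontrivial component at all, in which case $K^*$ is countable and $\dim_{H}(K^*)=0=t_1$. Once these edge cases are dispatched, the two bounds coincide and give $\dim_{H}(K^*)=t_1$.
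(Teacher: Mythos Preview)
The paper does not prove this theorem at all: Theorems~\ref{SC} and~\ref{NSC} are simply quoted as the main results of Mauldin and Williams \cite{MW}, with no argument supplied. So there is no ``paper's own proof'' to compare against; your proposal is effectively a self-contained proof of a cited result.

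That said, your outline is correct and is the standard way one derives the non-strongly-connected case from Theorem~\ref{SC}. The lower bound via the inclusion $\widetilde{K}_{C^*}\subseteq K^*$ is fine (the monotone-iteration argument you sketch is exactly what is needed), and the upper bound via the acyclic condensation, eventual trapping of infinite paths in a single component, and countable stability of Hausdorff dimension is also correct. Your handling of trivial components is sensible; note only that for a single vertex with no self-loop the $1\times 1$ weighted matrix is identically zero, so no $t_C$ with $\Phi_C(t_C)=1$ exists and such components are tacitly excluded from the maximum in the statement---your convention $t_C=0$ is the natural extension and does not affect the result.
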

By virtue of  Lemma \ref{Partition}, we can define an adjacency matrix $S$ which characterizes the relationship between the different $B_i=A_i\cap K$. Equivalently, we may  define a directed graph $(V, E)$ such that each $B_i$ is associated with a vertex $V_i$ in this graph. Using the
Markov property of the partition, one has an edge from vertex $V_i$ to vertex $V_j$ if there exists some $T_{i,j}\in\{T_1,T_2,\cdots,T_m\}$ such that $B_j \subset T_{i,j}(B_i)$. This allows us to find a similitude, say $g_{i,j}$,  associated to the edge $V_i\to V_j$.  For instance, if $T_{i,j}(x)=T_k(x)$ for some $ 1\leq k\leq m$, then $g_{i,j}(x)=f_k(x)$.  Recall that $A_i=[b_i,b_{i+1}], 1\leq i\leq s$, and note that $\{A_i\}_{i=1}^{s}$ are disjoint except for the endpoints of the intervals, and also forms a Markov partition for the system $\{T_1,T_2,\cdots,T_m\}$. By means of Theorem \ref{Existence}, for the directed graph $(V,E)$ with associated similitudes between vertices, there exists a unique vector of non-empty compact sets $(C_1,\ldots,C_s)$ such that, for each $u\in V$, $C_{u}=\bigcup_{(u,v)\in E}g_{u,v}(C_v)$. We denote $K^{*}=\cup_{i=1}^{s} C_i$ and have following result.
\setcounter{Lemma}{15}
\begin{Lemma}\label{equal}
$K^{*}=K$.
\end{Lemma}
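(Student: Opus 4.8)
The plan is to show that the vector $(B_1,\ldots,B_s)$ of blocks of the Markov partition satisfies exactly the system of set equations that characterizes $(C_1,\ldots,C_s)$, and then to invoke the uniqueness part of Theorem \ref{Existence}. Recall that by Theorem \ref{Existence} the vector $(C_1,\ldots,C_s)$ is the unique vector of non-empty compact sets with $C_u=\bigcup_{(u,v)\in E}g_{u,v}(C_v)$ for every $u$. Since each $B_u=A_u\cap K$ is non-empty (empty blocks having been discarded) and compact, it suffices to prove that
$$B_u=\bigcup_{(u,v)\in E}g_{u,v}(B_v)\qquad (1\le u\le s).$$
For then $(B_1,\ldots,B_s)$ is a vector of non-empty compact sets satisfying the defining system of the construction, and the uniqueness in Theorem \ref{Existence} forces $B_u=C_u$ for all $u$, giving $K^{*}=\bigcup_{u} C_u=\bigcup_{u} B_u=K$.

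For the inclusion $\supseteq$, recall that an edge $(u,v)\in E$ carrying the similitude $g_{u,v}=f_k$ exists precisely when $B_v\subset T_k(B_u)$. Applying $f_k=T_k^{-1}$ and using $f_k\circ T_k=\mathrm{id}$ gives $g_{u,v}(B_v)=f_k(B_v)\subseteq f_k(T_k(B_u))=B_u$, so the entire union on the right is contained in $B_u$. For the reverse inclusion $\subseteq$, I would take $x\in B_u\subseteq K$ and use the self-similarity $K=\bigcup_{k=1}^{m}f_k(K)$ to write $x=f_k(y)$ with $y\in K$, so that $y=T_k(x)\in K$. Since $x\in A_u$, we have $y\in T_k(A_u)\cap K=T_k(A_u\cap K)$, and Lemma \ref{Partition} shows this set is a union of blocks $B_{i_1}\cup\cdots\cup B_{i_l}$ with $T_k(A_u)=A_{i_1}\cup\cdots\cup A_{i_l}$; choose the block $B_v$ containing $y$. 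Because $B_v\subseteq T_k(B_u)$, the edge $(u,v)$ with $g_{u,v}=f_k$ belongs to $E$, and therefore $x=f_k(y)\in f_k(B_v)=g_{u,v}(B_v)$, which lies in the right-hand union. This establishes the set equation for every $u$.

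The only genuinely delicate point is the reverse inclusion, where one must guarantee that the block $B_v$ containing the preimage $y=T_k(x)$ is actually reached by an edge carrying the correct similitude $f_k$. This is exactly what the Markov property supplied by Lemma \ref{Partition} provides: it forces $T_k(A_u)$, and hence $T_k(B_u)$, to be an \emph{exact} union of partition blocks rather than to cut any block across its boundary, so that the image point $y$ lands in a full block $B_v$ satisfying $B_v\subseteq T_k(B_u)$. Once this is in hand, the equality of the two systems' solutions follows at once from the uniqueness assertion of Theorem \ref{Existence}, and the proof is complete.
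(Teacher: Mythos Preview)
Your approach is correct in outline and genuinely different from the paper's. The paper argues both inclusions directly via codings: for $x\in K$ it follows the lazy orbit to produce an infinite path in $(V,E)$, hence a representation $x=\lim_n g_{j_1,j_2}\circ\cdots\circ g_{j_n,j_{n+1}}(0)$ showing $x\in K^*$, and the reverse inclusion is handled symmetrically. You instead verify that $(B_1,\ldots,B_s)$ solves the graph-directed system and invoke the uniqueness part of Theorem~\ref{Existence}. This is a clean shortcut: it replaces the explicit coding construction by a fixed-point argument, and makes the role of Lemma~\ref{Partition} more transparent.

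There is, however, a small gap in your $\subseteq$ argument. When you write $x=f_k(y)$ using $K=\bigcup_k f_k(K)$, the index $k$ need not be the lazy index $\alpha(u)$ attached to the block $B_u$. In the paper's construction the outgoing edges from $V_u$ carry \emph{only} the similitude $g_{u,\cdot}=f_{\alpha(u)}$ (this is precisely the content of ``we always choose the lazy algorithm''), so the assertion ``the edge $(u,v)$ with $g_{u,v}=f_k$ belongs to $E$'' can fail when $k\neq\alpha(u)$. The fix is immediate: take $k=\alpha(u)$ from the start. Indeed, the entire $\subseteq$ step then collapses to applying $f_{\alpha(u)}$ to the conclusion of Lemma~\ref{Partition}, namely $T_{\alpha(u)}(B_u)=B_{i_1}\cup\cdots\cup B_{i_l}$, which yields
\[
B_u=f_{\alpha(u)}(B_{i_1})\cup\cdots\cup f_{\alpha(u)}(B_{i_l})=\bigcup_{(u,v)\in E}g_{u,v}(B_v)
\]
directly, with no need to pick $y$ first.
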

\begin{proof}
For any $x\in K$, by the lazy algorithm, see Definition \ref{Def:28},  we can find the lazy orbit of $x$, i.e. $$\{T_{i_{1}\ldots i_{k}}(x): k\geq 0\},$$ such that 
 for each $k$, $T_{i_{1}\ldots i_{k}}(x)$ falls into some $B_i=A_i \cap K, 1\leq i\leq s$.  As  such in the graph $(V,E)$, we can find  an infinite path 
 $$V_{j_1}\xrightarrow{g_{j_1, j_2}}V_{j_2}\xrightarrow{g_{j_2, j_3}}V_{j_3}\xrightarrow{g_{j_3, j_4}}V_{j_4}\cdots V_{j_n}\xrightarrow{g_{j_n, j_{n+1}}}\cdots .$$
 such that $x= \lim_{n\to \infty}g_{j_1, j_2} \circ\cdots \circ
g_{j_n, j_{n+1}}(0)$.  Each $g_{j_n, j_{n+1}}\in \{f_1,\cdots, f_m\}$. Hence $x\in K^{*}$, which  yields $K\subset K^{*}$. The converse statement 
$ K^{*}\subset K$ is proved similarly.
\end{proof}
Now we prove that $K^{*}$ satisfies the open set condition in terms of the Markov property of the partition. 
\begin{Lemma}\label{OSC}
Let $\{O_i=(b_i,b_{i+1})\}_{i=1}^{s}$. Then  for each $O_i$,  we have that $\bigcup_j g_{i,j} (O_j)\subset O_i$, and the union is disjoint, i.e., the graph-directed self-similar set $K^{*}$ satisfies the open set condition,
where $g_{i,j}(x)=f_k(x)$ for some $1\leq k\leq m$.
\end{Lemma}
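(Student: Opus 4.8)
The plan is to exhibit the open intervals $\{O_i=(b_i,b_{i+1})\}_{i=1}^{s}$ as the open sets certifying the graph-directed open set condition: for each vertex $V_i$ the images $\{g_{i,j}(O_j):(i,j)\in E\}$ are to be pairwise disjoint and all contained in $O_i$. The decisive observation is that, because we have fixed the lazy algorithm of Definition \ref{Def:28}, every edge leaving a given vertex carries one and the same similitude. Indeed, the points $\{b_1,\ldots,b_{s+1}\}$ are the finite orbits of the endpoints $\{a_1,\ldots,a_{2m}\}$ and therefore contain the $a_\ell$ themselves, so the partition $\{A_i=[b_i,b_{i+1}]\}$ refines the partition into admissible cells $[a_p,a_{p+1}]$. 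Each block $B_i=A_i\cap K$ hence lies in a single admissible cell, on which the lazy algorithm prescribes a single map; denote it $T_{k(i)}$. Consequently $g_{i,j}=f_{k(i)}$ for every edge $V_i\to V_j$.

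With this in hand, I would fix $i$ and apply Lemma \ref{Partition} to $T_{k(i)}$, obtaining $T_{k(i)}(A_i)=A_{i_1}\cup\cdots\cup A_{i_p}$. Since $T_{k(i)}$ is an increasing affine bijection, the intervals $A_{i_1},\ldots,A_{i_p}$ are consecutive and tile $T_{k(i)}(A_i)$ with pairwise disjoint interiors; among them, the edges out of $V_i$ correspond exactly to the blocks $A_{i_l}$ with $(b_{i_l},b_{i_l+1})\cap K\neq\emptyset$, the remaining ones having been deleted from the partition. Applying the inverse contraction $f_{k(i)}=T_{k(i)}^{-1}$, which is again an increasing affine bijection, yields that $f_{k(i)}(A_{i_1}),\ldots,f_{k(i)}(A_{i_p})$ are consecutive subintervals of $A_i$ with pairwise disjoint interiors whose union is all of $A_i=[b_i,b_{i+1}]$.

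Passing to open intervals then closes the argument. The images $g_{i,i_l}(O_{i_l})=f_{k(i)}(O_{i_l})$ are pairwise disjoint by injectivity of $f_{k(i)}$ on the disjoint $O_{i_l}$, and each is strictly contained in $O_i=(b_i,b_{i+1})$ because the tiling pieces avoid both the internal subdivision points and the endpoints $b_i,b_{i+1}$; deleting the blocks with empty $K$-interior only removes some of these pieces and hence preserves both the disjointness and the containment. I expect the only genuinely delicate point to be the single-similitude observation of the first paragraph, and I want to emphasise that it is exactly here that the lazy algorithm is indispensable: if at a block lying in an overlap region one were allowed to use every admissible $T_k$ rather than the single lazy choice, the resulting images $f_k(O_j)$ would overlap and the open set condition would fail, as one already checks at the overlap block $B$ of Example \ref{EX:10}.
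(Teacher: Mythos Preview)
Your argument is correct and follows the same skeleton as the paper's proof (Markov property $\Rightarrow$ inclusion, disjointness of the $O_j$ $\Rightarrow$ disjointness of the images), but you have made explicit the crucial point the paper glosses over: the lazy algorithm of Definition~\ref{Def:28} forces every edge out of a given vertex $V_i$ to carry one and the same similitude $g_{i,j}=f_{k(i)}$, which is precisely what makes the one-line claim ``the union is disjoint as $\{O_j\}$ is'' valid. Two minor cosmetic remarks: the $T_k$ need not be \emph{increasing} (the ratios $r_k$ may be negative), but any affine bijection preserves disjointness so your tiling argument is unaffected; and you do not need \emph{strict} containment $f_{k(i)}(O_{i_l})\subsetneq O_i$ for the open set condition, equality (which occurs when $p=1$) is permitted.
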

\begin{proof}
The union is disjoint as $\{O_j\}$  is. It remains to prove the inclusion. However, this is due to the  Markov property of the partition. More precisely, we have $T_{i,j}(O_i)\supset\bigcup_j (O_j)$, i.e.,  $\bigcup_j T_{i,j}^{-1} (O_j)\subset O_i$, where $T_{i,j}\in \{T_1,T_2,\cdots,T_m\}$. Hence, we  may define some $g_{i,j}(x)=f_k(x), 1\leq k\leq m$, satisfying $\bigcup_j g_{i,j} (O_j)\subset O_i$.
\end{proof}
\begin{proof}[Proof of Theorem \ref{graph-directed}]
The proof follows from Lemma \ref{equal}  and Lemma \ref{OSC}. 
\end{proof}
\subsection{Dimension of $U_{F}$}
 We turn to demonstrating how to find the dimension of the univoque set. In \cite{SKK}, we gave a method which can calculate $\dim_{H}(U_{F})$. However, in \cite{SKK} it was assumed that  $K$ is an interval. In this section,
the self-similar sets are not necessarily  intervals.  We consider the dimension of $U_{F}$ under the same assumptions, i.e. assume the considered  IFS is  exactly overlapping, and the endpoints of $f_i(K),1\leq i\leq m$, are periodic. 
However for simplicity
we assume  that the periodic orbits of the endpoints of  $f_i(K), 1\leq i\leq m,$ never fall into $\cup_{i\neq j}f_i(K)\cap f_j(K)$
as under this assumption 
 each  non-empty $f_i(K)\cap f_j(K)$ is a set of the Markov partition. If the periodic orbits  hit $\cup_{i\neq j}f_i(K)\cap f_j(K)$, then we find the Hausdorff dimension of $U_{F}$ in a similar way. 
 
 Without loss of generality, we assume that there are $t$ non-empty $f_i(K)\cap f_j(K)$. We call them the switch regions, and denote them by $\widehat{B_1}, \widehat{B_2},\cdots, \widehat{B_t}$, more precisely, $\widehat{B_j}=B_{i_j}, 1\leq j\leq t$. These sets are the $i_1, i_2,\cdots i_t$-th sets of the Markov partition.

 Recall the adjacency matrix $S$ corresponding to  the Markov partition. The  corresponding subshift of finite type is
 $$\Sigma=\{(i_k)_{k=1}^{\infty}\in\{1,2,3,\cdots,s\}^{\mathbb{N}}:S_{i_k,i_{k+1}}=1\,\, \mbox{for any}\,\, k\}.$$
We note that the digit $1\leq i\leq s$ is associated with the block $B_i$. 
Now we define another matrix for the calculation of $\dim_{H}(U_F).$

Let $S$ be the adjacency matrix  of the Markov partition,  recall that the indices $i_1,\cdots,i_t$ correspond to the switch regions  $B_{i_1}=\widehat{B_1}, B_{i_2}=\widehat{B_{2}},\cdots, B_{i_t}=\widehat{B_{t}}$. In the matrix $S$,  remove the $i_j$-th row and $i_j$-th column for  $1\leq j\leq t$, and denote this new matrix  by  $S^{'}$.  The corresponding subshift of finite type  is denoted by $\Sigma^{'}$.

We use Example \ref{EX:10} to explain the construction of $S^{'}$. The adjacency matrix is 
 $$
 S=\begin{pmatrix}
  1 & 1 & 1 & 0 \\
 0 & 0 & 0 & 1 \\ 
 0 & 0 & 1 & 1 \\
 1 & 1 & 1 & 1 \\
 \end{pmatrix}
.$$   
Note that there is only one switch region $f_0(K)\cap f_1(K)=f_{0q}(K)=f_{10}(K)$, which is associated with the block $B$. Hence, we cross the second row and second column of $S$, and obtain 
 $$
 S^{'}=\begin{pmatrix}
  1  & 1 & 0 \\
 0  & 1 & 1 \\
 1  & 1 & 1 \\
 \end{pmatrix}
.$$   

Let 
$(j_n)\in \Sigma^{'}$. For any $n\geq 1$, by the definition of Markov partition, there exists an expanding map $T_{j_n, j_{n+1}}$ such that $B_{j_{n+1}}\subset T_{j_n, j_{n+1}}(B_{j_{n}})$. Hence we can define a coding $(d_n)\in \{1,2,\cdots,m\}^{\mathbb{N}}$ in the following way:
$$d_n=i \textrm{ if } T_{j_n, j_{n+1}}=T_i.$$
By the lazy algorithm (Definition \ref{Def:28}), every $(j_n)\in \Sigma^{'}$ is associated with only one $(d_n)\in \{1,2,\cdots,m\}^{\mathbb{N}}$. However, $(d_n)$ may not be the unique coding with respect to $\{f_i\}_{i=1}^{m}$. 
We define the projection of $\Sigma^{'}$ as follows.
\begin{align}\label{Equ2}
\begin{split}
\pi(\Sigma^{'})=\Big\{x=\pi((j_n)): & x=\lim_{n\to \infty}f_{d_1}\circ \cdots\circ f_{d_n}(0)\\
& (d_n) \textrm{ is obtained by the method above} \Big\}.
\end{split}
\end{align}

Now we state the second main result of this paper. 
\setcounter{Theorem}{17}
\begin{Theorem}\label{Univoque set}
Suppose  the IFS of $K$ is  exactly overlapping, and the orbits of  endpoints of each $f_i(K), 1\leq i \leq m,$ are  periodic.  Then  apart from a countable set, $U_{F}$ is a graph-directed self-similar set satisfying the open set condition. 
\end{Theorem}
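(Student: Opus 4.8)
The plan is to rerun the argument that established Theorem~\ref{graph-directed} on the subsystem obtained by deleting the switch regions, and then to check that the resulting graph-directed attractor coincides with $U_{F}$ outside a countable set. Starting from the reformulation~\eqref{univoque reformulation111}, a point $x$ belongs to $U_{F}$ precisely when \emph{every} orbit of $x$ avoids $\bigcup_{k\neq l}(I_{k}\cap I_{l})$. Under the standing simplifying assumption that the periodic orbits of the endpoints never meet $\bigcup_{k\neq l}(I_{k}\cap I_{l})$, each non-empty switch region $I_{k}\cap I_{l}$ is itself one of the Markov blocks $\widehat{B_{j}}=B_{i_{j}}$, so deleting the rows and columns $i_{1},\dots,i_{t}$ from $S$ yields $S'$ together with the sub-shift $\Sigma'$ and the surviving edge-similitudes $g_{i,j}$. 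These data form a graph-directed construction in the sense preceding Theorem~\ref{Existence}; let $U^{*}=\bigcup_{u}C_{u}$ be its attractor. Repeating verbatim the argument of Lemma~\ref{equal}, but following the lazy algorithm only along paths of $\Sigma'$, identifies $U^{*}$ with the set $\pi(\Sigma')$ defined in~\eqref{Equ2}.

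I would then prove $U_{F}=U^{*}$ up to a countable set by two inclusions. For $U_{F}\subseteq U^{*}$: if $x\in U_{F}$, its unique coding is its lazy orbit, and by~\eqref{univoque reformulation111} this orbit never meets a switch region; since each $I_{k}\cap I_{l}$ is closed and contains the relevant endpoints $b_{i}$, the orbit avoids even the boundary of every switch region, so each orbit point lies in a non-switch block, the symbolic path lies in $\Sigma'$, and $x\in\pi(\Sigma')=U^{*}$. For the reverse up to a countable set, observe that the coding tree of $x$ branches exactly at those orbit points lying in a switch region. If $x\in U^{*}$, its lazy orbit follows a path of $\Sigma'$, so every orbit point sits in a non-switch block and can meet a switch region only at a shared endpoint $b_{i}$. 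Should the lazy orbit avoid even these endpoints, each orbit point lies in a single $f_{i}(K)$, the next digit is forced at every step, the coding is unique, and $x\in U_{F}$. Hence any $x\in U^{*}\setminus U_{F}$ must have its lazy orbit eventually hit one of the finitely many periodic endpoints $b_{i}$ that lie in a switch region; by Proposition~\ref{non unique prop} such an $x$ indeed has two codings. For a fixed $b_{i}$ the set of points whose orbit hits $b_{i}$ is the countable collection of images of $b_{i}$ under finite compositions of $f_{1},\dots,f_{m}$, and summing over the finitely many $b_{i}$ shows $U^{*}\setminus U_{F}$ is countable.

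It remains to record the open set condition for $U^{*}$, which is just the restriction of Lemma~\ref{OSC} to the surviving blocks: attaching the open sets $O_{i}=(b_{i},b_{i+1})$ to the non-switch vertices, the inclusions $\bigcup_{j}g_{i,j}(O_{j})\subset O_{i}$ and the disjointness are read off the sub-matrix $S'$ exactly as before. Combining the two inclusions with the open set condition shows that, apart from the countable set isolated above, $U_{F}$ equals the graph-directed self-similar set $U^{*}$ satisfying the open set condition, which is the assertion.

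The main obstacle is the second inclusion. The lazy coding controls only one distinguished orbit of $x$, whereas membership in $U_{F}$ is a statement about \emph{all} orbits of $x$; the two notions are reconciled only through Proposition~\ref{non unique prop}, which localizes every failure of uniqueness at an orbit point lying in a switch region. The delicate step is to argue that, once the lazy path has been confined to $\Sigma'$, such a failure can occur only when the lazy orbit grazes a boundary endpoint $b_{i}$, and then to funnel the entire discrepancy $U^{*}\setminus U_{F}$ into the countable preimage set of the finitely many $b_{i}$. Making this localization precise, while keeping track of the closed and relative-boundary behaviour of the switch regions, is the heart of the proof.
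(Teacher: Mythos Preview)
Your proposal is correct and follows essentially the same route as the paper: the paper establishes $U_{F}\subset\pi(\Sigma')$ (Lemma~\ref{lem:2.19}) and $\pi(\Sigma')\subset U_{F}\cup C$ for a countable $C$ (Lemma~\ref{lem:2.20}), then declares the identification of $\pi(\Sigma')$ with a graph-directed self-similar set satisfying the OSC to be ``analogous to Theorem~\ref{graph-directed}''; you carry out exactly these steps, making the analogy with Lemma~\ref{equal} and Lemma~\ref{OSC} explicit. The only difference is cosmetic: the paper takes $C$ to be the full preimage set of \emph{all} Markov endpoints $b_{i}$, whereas you sharpen this to the preimages of just those $b_{i}$ lying on the boundary of a switch block---both sets are countable, so either choice suffices.
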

\setcounter{Lemma}{18}
\begin{Lemma}\label{lem:2.19}
$\dim_{H}(U_{F})\leq \dim_{H}(\pi(\Sigma^{'}))$. 
\end{Lemma}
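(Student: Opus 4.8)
The plan is to prove the set inclusion $U_F\subseteq \pi(\Sigma')\cup N$ for a suitable countable set $N$; the stated dimension inequality is then immediate from the monotonicity of Hausdorff dimension, since a countable set has Hausdorff dimension zero.

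First I would invoke the reformulation \eqref{univoque reformulation111}, which says that $x\in U_F$ precisely when its unique orbit $\{T_{i_1\cdots i_n}(x):n\ge0\}$ (well defined by Lemma \ref{Dynamical lemma}) never meets $\bigcup_{k\neq l}(I_k\cap I_l)$. By the standing assumption of this subsection, the periodic orbits of the endpoints do not fall into the switch regions, so each non-empty $I_k\cap I_l$ is exactly one block $\widehat{B_q}=B_{i_q}$ of the Markov partition. Hence the orbit of every $x\in U_F$ visits only the non-switch blocks, i.e. the $B_j$ with $j\notin\{i_1,\dots,i_t\}$.

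Next, given $x\in U_F$ with unique coding $(d_n)$, I would record the itinerary $(j_n)_{n\ge1}$ of its orbit, defined by $T_{d_1\cdots d_{n-1}}(x)\in B_{j_n}$ (with $T_{d_1\cdots d_0}(x)=x$). By the previous paragraph each $j_n$ is a non-switch index, and the Markov property forces $S_{j_n,j_{n+1}}=1$; since neither $j_n$ nor $j_{n+1}$ belongs to $\{i_1,\dots,i_t\}$, this entry survives the deletion of the switch rows and columns, so $S'_{j_n,j_{n+1}}=1$ and therefore $(j_n)\in\Sigma'$. Moreover $B_{j_{n+1}}\subset T_{j_n,j_{n+1}}(B_{j_n})$ forces $T_{j_n,j_{n+1}}=T_{d_n}$, so the coding assigned to $(j_n)$ by the construction in \eqref{Equ2} is exactly $(d_n)$; consequently $x=\pi((j_n))\in\pi(\Sigma')$.

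The only delicate point, and the one place I expect a genuine technical wrinkle, is that the itinerary $(j_n)$ is ambiguous when the orbit lands on a common endpoint $b_i$ of two adjacent non-switch blocks. Collecting these exceptional points, they lie in $\bigcup_{(i_1\cdots i_n)} (T_{i_1\cdots i_n})^{-1}\{b_1,\dots,b_{s+1}\}$, a countable union of finite sets, hence a countable set $N$. For every $x\in U_F\setminus N$ the argument above yields $x\in\pi(\Sigma')$, so $U_F\subseteq \pi(\Sigma')\cup N$. Since $\dim_H N=0$, monotonicity of Hausdorff dimension gives $\dim_H(U_F)\le\dim_H(\pi(\Sigma')\cup N)=\dim_H(\pi(\Sigma'))$, completing the proof.
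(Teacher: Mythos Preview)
Your argument is correct and follows the same route as the paper: show that a univoque point's orbit avoids all switch blocks, read off an itinerary in $\Sigma'$, and check that the coding attached to this itinerary by the construction in \eqref{Equ2} reproduces the unique coding of $x$, whence $x\in\pi(\Sigma')$.

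The only difference is that the paper asserts the clean inclusion $U_F\subset\pi(\Sigma')$ in one line, while you hedge with a countable exceptional set $N$ coming from endpoint ambiguities. This caution is harmless but unnecessary: once $j_n$ indexes a non-switch block, $B_{j_n}$ lies in $f_k(K)$ for \emph{exactly one} $k$ (otherwise $B_{j_n}$ would sit inside some $\widehat{B_q}$, which is itself a Markov block and meets $B_{j_n}$ only in endpoints), so the map $T_{j_n,j_{n+1}}$ is forced to equal $T_{d_n}$ regardless of which of two adjacent non-switch blocks you assign to an endpoint. Thus any admissible choice of itinerary already gives $\pi((j_n))=x$, and no exceptional set is needed. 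Either way the dimension inequality follows.
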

\begin{proof}
For any $x\in U_{F}$, there exists a unique coding of $x$, say $(i_n)\in \{1, \cdots, m\}^{\mathbb{N}}$,
such that $T_{i_1i_2\cdots i_n}(x)\notin \widehat{B_1}\cup \cdots \cup \widehat{B_t}$ for any $n\geq 0$. By the definition of $\Sigma^{'}$, we have $U_{F}\subset \pi(\Sigma^{'})$. 
\end{proof}
\begin{Lemma}\label{lem:2.20}
There exists a countable set $C$ such that 
$$\pi(\Sigma^{'})\subset U_{F}\cup C.$$
\end{Lemma}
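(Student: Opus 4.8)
The plan is to set $C := \pi(\Sigma')\setminus U_{F}$ and prove that $C$ is countable, which gives $\pi(\Sigma')\subset U_{F}\cup C$ at once. First I would fix $x\in \pi(\Sigma')$ with $x\notin U_{F}$, writing $x=\pi((j_n))$ for some $(j_n)\in\Sigma'$ and letting $(d_n)\in\{1,\ldots,m\}^{\mathbb{N}}$ be the coding produced from $(j_n)$ by the lazy algorithm of Definition \ref{Def:28}. The point of working in $\Sigma'$ is that the orbit $\{T_{d_1\cdots d_n}(x):n\geq 0\}$ visits only the blocks $B_{j_1},B_{j_2},\ldots$, none of which is a switch region.

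Next I would use that $x\notin U_{F}$. By Proposition \ref{non unique prop}, $x$ admits a second coding $(e_n)\neq(d_n)$. Let $N\geq 0$ be the largest index with $d_k=e_k$ for all $1\leq k\leq N$; this is finite since the two codings are distinct, and $d_{N+1}\neq e_{N+1}$. Put $y:=T_{d_1\cdots d_N}(x)=T_{e_1\cdots e_N}(x)$. Since $T_{d_{N+1}}(y)$ and $T_{e_{N+1}}(y)$ both lie in $K$ (by Lemma \ref{Dynamical lemma}) with $d_{N+1}\neq e_{N+1}$, the point $y$ belongs to the switch region $I_{d_{N+1}}\cap I_{e_{N+1}}$; but $y$ also lies in the non-switch block $B_{j_{N+1}}$. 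The crux of the argument is then the observation that each switch region is a single block $[b_p,b_{p+1}]\cap K$ and that distinct blocks of the Markov partition meet only at the finitely many partition points $b_i$; consequently $y$ must be one of the (at most $2t$) endpoints of the switch regions, a finite set I will call $\mathcal{E}_{\mathrm{sw}}$.

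Finally, inverting the maps gives $x=f_{d_1}\circ\cdots\circ f_{d_N}(y)=f_{d_1\cdots d_N}(y)$ with $y\in\mathcal{E}_{\mathrm{sw}}$, so
\[
C\subseteq \bigcup_{N\geq 0}\ \bigcup_{(d_1,\ldots,d_N)\in\{1,\ldots,m\}^{N}}\ \bigcup_{y\in\mathcal{E}_{\mathrm{sw}}}\{\,f_{d_1\cdots d_N}(y)\,\}.
\]
The right-hand side is a countable union of singletons, since $\mathcal{E}_{\mathrm{sw}}$ is finite and there are only countably many finite words over $\{1,\ldots,m\}$; hence $C$ is countable, as required.

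The step I expect to be the main obstacle is the structural one in the second paragraph: one must verify that the first disagreement of the two codings really does place the corresponding orbit point $y$ in a switch region, and then that simultaneous membership of $y$ in a switch region and in a non-switch block forces $y$ onto the finite endpoint set. This relies essentially on the standing hypothesis of this subsection that each non-empty $f_i(K)\cap f_j(K)$ is a single block of the Markov partition, so that switch regions overlap the other blocks only in their boundary points; without it the intersection could be a positive-dimensional set and the countability conclusion would fail. Once this geometric fact is in place, the remaining counting is routine.
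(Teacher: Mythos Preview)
Your proof is correct and follows essentially the same approach as the paper. Both arguments hinge on the fact that the lazy orbit coming from $(j_n)\in\Sigma'$ avoids the interiors of the switch blocks, so any failure of uniqueness forces the orbit to land on a Markov partition endpoint; the resulting countable set is then the set of $f$-preimages of finitely many endpoints. The paper states the key implication (``if the orbit never hits the endpoints of the $B_j$ then $x\in U_F$'') without further justification, whereas you prove its contrapositive explicitly by locating the first disagreement of two codings and showing the corresponding orbit point lies simultaneously in a switch block and a non-switch block. Your countable set $\mathcal{E}_{\mathrm{sw}}$ of switch-region endpoints is a subset of the paper's set $F$ of all block endpoints, which is a harmless sharpening.
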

\begin{proof}
Let $x\in\pi(\Sigma^{'}) $. By the definition of $\Sigma^{'}$, there exists $(d_n)\in \{1, \cdots, m\}^{\mathbb{N}}$,
such that $T_{d_1d_2\cdots d_n}(x)\notin \widehat{B_1^{\circ}}\cup \cdots \cup \widehat{B_t^{\circ}}$ for any $n\geq 0$, where $\widehat{B_k^{\circ}}=B_{i_k}^{\circ}=A_{i_k}^{\circ}\cap K=(b_{i_k}, b_{i_k+1})\cap K$. We emphasize here that the orbit we choose is the lazy orbit, see Definition \ref{Def:28}.
If the orbit of $x$ never hits the endpoints of each $B_j, 1\leq j\leq s$, then $x$ is a univoque point, i.e. $x\in U_{F}$. 
If there exists some $n_0$ such that $T_{i_1i_2\cdots i_{n_0}}(x)$ hits some endpoint of $B_j, 1\leq j\leq s$, then we have 
$$x\in \cup_{n=1}^{\infty}\cup_{(i_1\cdots i_n)\in\{1,2,\cdots, m\}^{n}}f_{i_1\cdots i_n}(F),$$
where $F$ is the set of all the endpoints of $\cup_{i=1}^{s}B_i$. 
\end{proof}
Now the remaining  proof of Theorem \ref{Univoque set} is analogous to Theorem \ref{graph-directed}.

We finish this subsection by giving one example. 
The following example was investigated in \cite{NW}. We give a very short calculation. 
\setcounter{Example}{20}
\begin{Example}\label{inhomgeneous}
Let $K$ be the self-similar set of the following IFS:
$$\left\{f_1(x)=\dfrac{x}{3},\, f_2(x)=\dfrac{x}{9}+\dfrac{8}{27},\, f_3(x)=\dfrac{x+2}{3}\right\}.$$
\end{Example}
Let $J=[0,1]$, $f_{1}(J)=\left[0,\dfrac{1}{3}\right],f_{2}(J)=\left[\dfrac{8}{27},\dfrac{11}{27} \right],f_{3}(J)=\left[\dfrac{2}{3},1\right].$
After some calculation, we find that $f_1(K)\cap f_2(K)=f_{1331}(K)\cup f_{1332}(K)\cup f_{1333}(K),$
where $f_{1331}(K)=f_{211}(K), f_{1332}(K)=f_{212}(K), f_{1333}(K)=f_{213}(K), $  i.e. $f_1(K)\cap f_2(K)$ is the union of some exact overlaps.
Hence the IFS is  exactly overlapping, and the endpoints of $f_i(K), 1\leq i\leq 3,$ are periodic. 
We partition $K=A\cup B\cup C\cup D\cup E$, where 

 $A=\left[0, \dfrac{8}{27}\right]\cap K, B=\left[\dfrac{8}{27}, \dfrac{1}{3}\right]\cap K,  C=\left[\dfrac{1}{3}, \dfrac{11}{27}\right]\cap K, D=\left[\dfrac{2}{3}, \dfrac{8}{9}\right]\cap K, E=\left[\dfrac{8}{9}, 1\right]\cap K$.
Then we have $$T_{1}(A)=A\cup B\cup C\cup D, T_{1}(B)=E, $$
and 
$$T_{2}(C)= C\cup D \cup E,\,T_{3}(D)=A\cup B\cup C,\,T_{3}(E)= D\cup E.$$
The weighted adjacency  matrix  for $K$ is 
$$
 A^{t}=\begin{pmatrix}
  3^{-t} &   3^{-t} &  3^{-t} &   3^{-t} & 0 \\
   0 & 0& 0 & 0&    3^{-t}  \\ 
 0 & 0 &    9^{-t} &    9^{-t} &    9^{-t} \\ 
   3^{-t}  &    3^{-t} &   3^{-t}  & 0& 0 \\
 0 & 0 & 0 &   3^{-t} &    3^{-t} \\
 \end{pmatrix}
$$Since the block $B=f_1(K)\cap f_2(K)$ is the union of some exact overlaps, then 
 the weighted adjacency matrix for the univoque set is 
$$
 A^{t}=\begin{pmatrix}
  3^{-t}  &  3^{-t} &   3^{-t} & 0 \\

 0 & 9^{-t} &        9^{-t} &    9^{-t} \\ 
   3^{-t}  &       3^{-t}  & 0& 0 \\
 0 & 0 &   3^{-t} &    3^{-t} \\
 \end{pmatrix}
$$
Therefore the Hausdorff dimension of $K$ is $\dfrac{\log \lambda}{\log 9}=\alpha$, where $\lambda$ is the largest solution of $x^3-6x^2+5x^2-1=0$,
and the dimension of $U_F$ is $\dfrac{\log r}{\log 9}=\gamma$, where $r$ is the largest positive root of 
\[x^5-6x^4+9x^3-8x^2+4x-1=0 \]
Moreover,  since  $S$ and $S^{'}$ are irreducible, we can obtain the property of the Hausdorff measure, i.e.
$$0<\mathcal{H}^{\alpha}(K)<\infty.$$
and
$$0<\mathcal{H}^{\gamma}(U_F)<\infty.$$
\setcounter{Remark}{21}
\begin{Remark}
In \cite{DJKL1},  for this example we  proved that $$\dim_{H}(U_k)=\dim_{H}(U_F)$$ for any finite $k\geq2$, 
where $U_k$ is a subset of $K$ such that every point in $U_k$ has exactly $k$ different codings. 
\end{Remark}

\subsection{Points in $K$ with multiple  codings}
In this subsection we give an application of Theorem \ref{Univoque set}. 
 Let
$$U_{k}:=\{x\in K:x\,\, \mbox{ has exactly $k$ codings}\}, k= 1, 2, \cdots, \aleph_0.$$
A simple analysis enables us to find the Hausdorff dimension of $U_{k}$ for some cases. For simplicity, we consider an example.  The deeper results can be found in \cite{DJKL, DJKL1}. This example contains some key ideas which are useful to analyze $U_{k}$. 
Suppose  that $K$ is the attractor of the following IFS,
$$\left\{f_{1}(x)=\lambda x, f_{2}(x)=\lambda x+2\lambda,  f_{3}(x)=\lambda x+3\lambda-\lambda^2,f_4(x)=\lambda x+1-\lambda\right\}.$$
where $0<\lambda<\dfrac{5-\sqrt{21}}{2}$.

The convex hull of $K$ is $E=[0,1]$. 
$$f_1(E)=[0,\lambda], f_2(E)=[2\lambda, 3\lambda],$$ 
$$ f_3(E)=[ 3\lambda-\lambda^2, 4\lambda-\lambda^2], f_4(E)=[1-\lambda, 1].$$
The first iteration of this IFS is the following figure. 

\begin{figure}[h]\label{figure1}
\centering
\begin{tikzpicture}[scale=10]
\draw(0,0)node[below]{\scriptsize 0}--(.2,0)node[below]{\scriptsize$\lambda$};
\draw(.4,0)node[below]{\scriptsize$2\lambda$}--(0.6,0)node[below]{\scriptsize$3\lambda$};
\draw(0.5,-0.1)node[below]{\scriptsize$3\lambda-\lambda^2$}--(0.7,-0.1)node[below]{\scriptsize$4\lambda-\lambda^2$};
\draw(0.8,0)node[below]{\scriptsize$1-\lambda$}--(1,0)node[below]{\scriptsize$1$};
\node [label={[xshift=1.2cm, yshift=0cm]$f_1(E)$}] {};
\node [label={[xshift=4.8cm, yshift=0cm]$f_2(E)$}] {};
\node [label={[xshift=5.8cm, yshift=-1.2cm]$f_3(E)$}] {};
\node [label={[xshift=8.8cm, yshift=0cm]$f_4(E)$}] {};
\end{tikzpicture}\caption{First iteration of $K$}
\end{figure}
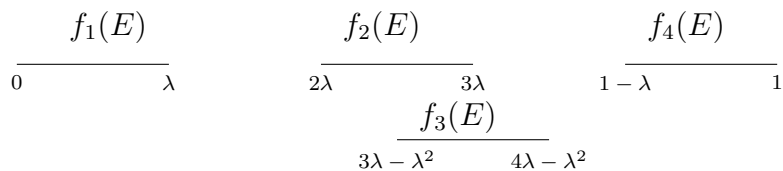

Note that $f_2\circ f_4=f_3\circ f_1$. Hence, we can partition $K$ via 
$$K=A\cup B\cup C\cup D\cup E,$$
where $A=[0,\lambda]\cap K, B=[2\lambda, 3\lambda-\lambda^2]\cap K, C=[3\lambda-\lambda^2,3\lambda]\cap K, D=[3\lambda, 4\lambda-\lambda^2]\cap K, E=[1-\lambda,1]\cap K.$
These blocks have following relations:
$T_1(A)=A\cup B\cup C\cup D\cup E$, $T_2(B)=A\cup B\cup C\cup D$, $T_2(C)=E$, $T_3(D)= B\cup C\cup D\cup E$, $T_4(E)=A\cup B\cup C\cup D\cup E$.
The adjacency matrix is 
$$
 S=\begin{pmatrix}
  1 & 1& 1 & 1 & 1 \\
  1 & 1& 1 & 1 & 0 \\
  0 & 0 & 0 &0&1 \\
0& 1 & 1 & 1&1 \\
  1 & 1& 1 & 1 & 1 \\
 \end{pmatrix}
$$
Since $C=[3\lambda-\lambda^2,3\lambda]\cap K=f_2\circ f_4(K)=f_3\circ f_1(K)=f_2(K)\cap f_3(K)$,
we can define 
$$
 S^{'}=\begin{pmatrix}
  1 & 1 & 1 & 1 \\
  1 & 1& 1 & 0 \\
0& 1  & 1&1 \\
  1 & 1& 1 & 1 \\
 \end{pmatrix}.
$$
We  have following result.
\setcounter{Theorem}{22}
\begin{Theorem}\label{k=1}
Let  $K$ be the attractor of the following IFS,
$$\left\{f_{1}(x)=\lambda x, f_{2}(x)=\lambda x+2\lambda,  f_{3}(x)=\lambda x+3\lambda-\lambda^2,f_4(x)=\lambda x+1-\lambda\right\}.$$
where $0<\lambda<\dfrac{5-\sqrt{21}}{2}$.
Then for any $k\geq 1$,
$\dim_{H}(U_{2^k})=\dim_{H}(U_1)=\dfrac{\log (2+\sqrt{2})}{-\log \lambda}$, where $2+\sqrt{2}$ is the spectral radius of $S^{'}$. Moreover, $U_i=\emptyset, i\neq 2^{k}, k\geq1$, and $U_{\aleph_0}=\emptyset$. 
\end{Theorem}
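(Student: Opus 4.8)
The plan is to decouple the counting statement from the dimension statement, and to drive everything from the single exact overlap $f_2\circ f_4=f_3\circ f_1$. First I would record that among the four images only $f_2(E)$ and $f_3(E)$ meet: the hypothesis $\lambda<\frac{5-\sqrt{21}}{2}$ is precisely the inequality $\lambda^2-5\lambda+1>0$ that forces $f_3(E)\cap f_4(E)=\emptyset$, while the remaining pairs are disjoint for elementary reasons. Hence the unique switch region is $C=f_2(K)\cap f_3(K)=f_{24}(K)=f_{31}(K)$, and by Proposition \ref{non unique prop} a point of $K$ can branch its coding exactly when its orbit lies in $C$; at every other block the next digit is forced. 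The crucial observation is that the two branches issued at $C$ rejoin after one further forced step: if $x\in C$ then $T_2(x)\in E=f_4(K)$ forces the digit $4$ and $T_3(x)\in A=f_1(K)$ forces the digit $1$, and since $f_{24}=f_{31}$ we get $T_{24}(x)=T_{31}(x)=:x'$. Thus each visit to $C$ contributes a factor $2$ to the number of codings while returning the orbit to a single common point $x'$, from which the continuation is identical for both choices.

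From this ``diamond'' structure I would deduce the cardinality dichotomy. Let $m(x)\in\{0,1,2,\dots\}\cup\{\infty\}$ be the number of times the lazy orbit of $x$ (Definition \ref{Def:28}, which selects the digit $2$ at $C$) meets $C$. Because the pair of branches created at any visit reunites at $x'$ before the next visit occurs, the choices made at successive visits are mutually independent, and the codings of $x$ are in bijection with $\{2,3\}^{\,m(x)}$. Consequently $x$ has exactly $2^{m(x)}$ codings when $m(x)<\infty$ and exactly $2^{\aleph_0}$ codings when $m(x)=\infty$. This yields at once that $U_i=\emptyset$ unless $i$ is a power of $2$ or $i$ equals the continuum, and in particular that $U_{\aleph_0}=\emptyset$, which is the corollary stated in the introduction.

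For the dimension of $U_1=U_F$ I would apply Theorem \ref{Univoque set}: up to a countable set, $U_1$ is the graph-directed self-similar set attached to the $C$-free subgraph, with adjacency matrix $S'$ and all edge ratios equal to $\lambda$. Then the weighted matrix is $\lambda^t S'$ and $\Phi(t)=\lambda^t\rho(S')$ with $\rho(S')=2+\sqrt2$; since $S'$ is irreducible (one checks directly that $A,B,D,E$ are mutually reachable), Theorem \ref{SC} gives $\dim_H(U_1)=\frac{\log(2+\sqrt2)}{-\log\lambda}$. To upgrade this to $\dim_H(U_{2^k})=\dim_H(U_1)$ I use two similarity comparisons. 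For the upper bound, cut each $x\in U_{2^k}=\{x:m(x)=k\}$ at the step just after its last visit to $C$, writing $x=f_w(y)$ with $y\in U_1$ and $w$ a finite word with exactly $k$ visits to $C$; as $w$ ranges over these countably many admissible prefixes this gives $U_{2^k}\subseteq\bigcup_w f_w(U_1)$, a countable union of similarity copies of $U_1$, whence $\dim_H(U_{2^k})\le\dim_H(U_1)$. For the lower bound, the block cycle $C\xrightarrow{T_2}E\xrightarrow{T_4}C$ lets me build a single admissible word $w_0$ visiting $C$ exactly $k$ times and ending at a $C$-free vertex $v\in\{A,E\}$; then $f_{w_0}$ carries the full-dimensional univoque piece emanating from $v$ into $U_{2^k}$, and being a similarity it preserves dimension, so $\dim_H(U_{2^k})\ge\dim_H(U_1)$.

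The main obstacle is the rejoining lemma of the first paragraph together with the independence claim underlying the bijection with $\{2,3\}^{m(x)}$: one must verify that no visit to $C$ can interfere with a later one, so that the two branches genuinely reunite at $x'$ before any subsequent digit is chosen, and that distinct choice-sequences really produce distinct codings even in the infinite case, so that $m(x)=\infty$ yields a full continuum of expansions rather than merely countably many.
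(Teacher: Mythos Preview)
Your proposal is correct and rests on the same mechanism as the paper, but you organise it more efficiently. The paper proceeds through a chain of lemmas: it first establishes the translation symmetry $T_1(y)=T_4(y+1-\lambda)$ (Lemma~\ref{univoque+}), so that $T_2(x)$ and $T_3(x)$ have the same number of codings whenever $x$ lies in the switch region; from this it deduces $U_{\text{odd}}=\emptyset$ by parity (Lemma~\ref{odd}), then $U_{2i}=\emptyset$ for $2i\neq 2^s$ by induction on the $2$-adic valuation, and $U_{\aleph_0}=\emptyset$ by a separate swapping argument (Lemma~\ref{countable111}). Your ``diamond'' observation that $T_{24}(x)=T_{31}(x)$ on $C$ is one step beyond Lemma~\ref{univoque+} (it follows from $f_{24}=f_{31}$, or equivalently from that lemma plus the forced digits after $C$), and it lets you replace those three lemmas by a single bijection between codings and $\{2,3\}^{m(x)}$. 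This buys conceptual clarity and brevity; the paper's staged argument, on the other hand, isolates reusable lemmas (notably Lemma~\ref{univoque+} and Lemma~\ref{===}) that generalise to settings where branches need not literally recombine. Your upper bound for $\dim_H(U_{2^k})$ matches Lemma~\ref{one side}, and your lower bound via the cycle $C\to E\to C$ is exactly the inclusion $(f_{24})^k(U_1)\subset U_{2^k}$ obtained from Lemma~\ref{===} by induction.
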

\setcounter{Corollary}{23}
\begin{Corollary}
For any  $x\in K$, $x$ has $2^k, k\geq 0$, expansions or has uncountable expansions. 
\end{Corollary}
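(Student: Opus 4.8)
The plan is to reduce the whole statement to a single combinatorial fact about where a coding can branch. First I would record the geometry forced by $0<\lambda<\frac{5-\sqrt{21}}{2}$. Comparing the four intervals $f_1(E)=[0,\lambda]$, $f_2(E)=[2\lambda,3\lambda]$, $f_3(E)=[3\lambda-\lambda^2,4\lambda-\lambda^2]$, $f_4(E)=[1-\lambda,1]$, every pair is disjoint except $f_2(E)\cap f_3(E)=[3\lambda-\lambda^2,3\lambda]$; the only binding inequality is $4\lambda-\lambda^2<1-\lambda$, equivalently $\lambda^2-5\lambda+1>0$, which is precisely the stated bound. Hence $C=f_2(K)\cap f_3(K)$ is the unique switch region, and since $f_2\circ f_4=f_3\circ f_1$ we have the exact overlap $C=f_{24}(K)=f_{31}(K)$ (Definition \ref{exact overlap}). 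At the level of the Markov blocks one checks that the only digits admissible on $C$ are $2$ and $3$, while the only digit admissible on $E$ (resp.\ $A$) is $4$ (resp.\ $1$); thus the two ways to resolve a point of $C$ are exactly the length-two words $24$ and $31$.

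The structural heart is the reconvergence identity: taking inverses in $f_2\circ f_4=f_3\circ f_1$ gives $T_{24}=T_{31}$ as affine maps, so for every $x\in C$ the two resolutions meet after two steps, $T_{24}(x)=T_{31}(x)=:y$. Granting this, I would let $N(x)\in\{0,1,2,\dots\}\cup\{\infty\}$ denote the number of times an orbit of $x$ meets $C$; reconvergence makes $N(x)$ independent of the choices at the branch points, hence well defined. Because $C$ is the only place a coding can branch and each visit to $C$ contributes an independent binary choice that does not alter the subsequent point $y$, the coding tree of $x$ is a binary tree with exactly $N(x)$ branching levels. Therefore $x$ has exactly $2^{N(x)}$ codings if $N(x)<\infty$, and a continuum of codings if $N(x)=\infty$. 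The ``moreover'' assertions follow at once: every finite number of codings is a power of two, so $U_i=\emptyset$ whenever $i$ is not of the form $2^k$, and no point has countably infinitely many codings, so $U_{\aleph_0}=\emptyset$. In particular $U_1=\{x:N(x)=0\}=U_F$ and $U_{2^k}=\{x:N(x)=k\}$ for $k\ge1$.

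For the value of $\dim_H U_1$ I would apply Theorem \ref{Univoque set}: up to a countable set, $U_F$ is a graph-directed self-similar set satisfying the open set condition, governed by the subshift $\Sigma'$ with adjacency matrix $S'$ on the blocks $\{A,B,D,E\}$. Since every similitude involved has the same ratio $\lambda$, the weighted matrix is $A^t=\lambda^tS'$ and the Mauldin--Williams equation $\Phi(t)=1$ (Theorem \ref{SC}) reads $\lambda^t\rho(S')=1$; as $S'$ is irreducible this gives $\dim_H U_1=\log\rho(S')/(-\log\lambda)$. Computing the characteristic polynomial of $S'$ (its rows for $A$ and $E$ coincide, and the symmetric eigenvectors satisfy $(\mu-2)^2=2$) yields $\rho(S')=2+\sqrt2$, so $\dim_H U_1=\log(2+\sqrt2)/(-\log\lambda)$.

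It remains to show $\dim_H U_{2^k}=\dim_H U_1$ for $k\ge1$, which I would do by two inclusions. Put $g=f_{24}=f_{31}$, a similarity of ratio $\lambda^2$ with $g(K)=C$. For the lower bound, $g^k(U_1)\subseteq U_{2^k}$: if $x=g^k(u)$ with $u\in U_1$ then $x\in C$, and since $T_{24}=g^{-1}$ each resolution peels off one factor of $g$, so the orbit meets $C$ exactly $k$ times before reaching $u$, whose orbit avoids $C$; hence $N(x)=k$. As $g^k$ is a similarity, $\dim_H U_{2^k}\ge\dim_H g^k(U_1)=\dim_H U_1$. For the upper bound, given $x\in U_{2^k}$ take its canonical coding (always resolving $C$ by $24$); after the final resolution the orbit reaches, at a finite time, a point $y$ with $N(y)=0$, i.e.\ $y\in U_1$, and $x=f_w(y)$ for the corresponding prefix $w$. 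Thus $U_{2^k}\subseteq\bigcup_w f_w(U_1)$, a countable union of similar copies of $U_1$, and countable stability of Hausdorff dimension gives $\dim_H U_{2^k}\le\dim_H U_1$. I expect the main obstacle to be the rigorous justification of the reconvergence lemma and the resulting binary-tree description of the coding space---in particular, keeping the count of $C$-visits consistent with the lazy-orbit convention of Definition \ref{Def:28} and checking that no extra $C$-visit is created at the junction between the finite prefix $w$ and the univoque tail $y$; the dimension computation is then routine.
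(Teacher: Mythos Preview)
Your argument is correct, and it is genuinely more direct than the paper's. Both approaches rest on the same algebraic identity $f_2\circ f_4=f_3\circ f_1$, but you exploit its inverse form $T_{24}=T_{31}$ as a \emph{reconvergence} statement: the two branches at every visit to $C$ merge after exactly two steps, so the function $N(x)$ counting $C$-visits is well defined and the coding tree is a complete binary tree of depth $N(x)$. The dichotomy $|\,\text{codings}\,|\in\{2^k:k\ge0\}\cup\{2^{\aleph_0}\}$ falls out in one stroke. The paper instead proves Lemma~\ref{univoque+} (the translation symmetry $T_1(x)=T_4(x+1-\lambda)$, which is your reconvergence identity read one step earlier) and then argues case by case: Lemma~\ref{odd} rules out odd counts by contradiction, Lemma~2.31 rules out $2^{\ell}(2m+1)$ with $m\ge1$ by induction on $\ell$, and Lemma~\ref{countable111} handles $\aleph_0$ separately. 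Your packaging avoids this case analysis entirely.

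For the dimension equality your two-sided bound is also cleaner than the paper's: the paper obtains the lower bound inductively via $f_2\circ f_4(U_{2^k})\subset U_{2^{k+1}}$ (Lemma~\ref{===}) together with Lemmas~\ref{intersection}--\ref{Translation} for the base case, whereas your single inclusion $g^k(U_1)\subseteq U_{2^k}$ with $g=f_{24}$ gives it at once. The upper bound is the same in both (a countable union of similar copies of $U_1$, cf.\ Lemma~\ref{one side}). Your worry about an ``extra $C$-visit at the junction'' is harmless here: since $T_2(C)=E$ and $T_3(C)=A$, and $E$, $A$ are disjoint from $C$, no visit is hidden between a branch point and the reconvergence point, and $u\in U_1$ forces $u\notin C$, so $N(g^k(u))=k$ exactly. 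The one cosmetic point is that the statement you were asked to prove is only the Corollary (the dichotomy on the number of codings); you have in fact re-proved all of Theorem~\ref{k=1}, which is more than required but of course sufficient.
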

To prove the above results, we need several lemmas.
\setcounter{Lemma}{24}
\begin{Lemma}\label{univoque+}
Suppose $x\in [0,\lambda]\cap K$ has exactly $k$ codings, $k\geq 1$. Then $x+1-\lambda\in [1-\lambda, 1]\cap K$  also has exactly $k$ codings.  Similarly,  if a  point $y\in [1-\lambda,1]\cap K$ has exactly $k$ different codings, then $y-(1-\lambda)$  also has exactly $k$ different codings. 
\end{Lemma}
\begin{proof}
Suppose that  $x\in [0,\lambda]\cap K$ has $k$ codings. Every coding begins with digit $1$, i.e. $(1i_2i_3i_4\cdots)$.
We define a corresponding coding $(4i_2i_3i_4\cdots)$. 
Note that $$x=f_1(\lim_{n\to \infty}f_{i_2}\circ f_{i_3}\circ \cdots \circ f_{i_n}(0)),$$
hence, 
we have 
\begin{eqnarray*}
x+1-\lambda&=&f_4(\lim_{n\to \infty}f_{i_2}\circ f_{i_3}\circ \cdots \circ f_{i_n}(0))\\
\end{eqnarray*}
Therefore, $(4x_2x_3x_4\cdots)$ is a coding of $x+1-\lambda$. 
By the definition of $T_i, 1\leq i\leq 4$, it follows that $T_1(x)=T_4(x+1-\lambda)$. In other words, after the first iteration of the expanding maps, the orbits of $x$ and $x+1-\lambda$ coincide. Thus, if $x\in [0,\lambda]\cap K$ has exactly $k$ codings, then $x+1-\lambda\in [1-\lambda, 1]\cap K$  also has precisely $k$ codings. 
\end{proof}
\begin{Lemma}\label{intersection}
$\dim_{H}(U_2)=\dim_{H}(U_1\cap (U_1+1-\lambda))$.
\end{Lemma}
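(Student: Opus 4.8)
The plan is to reduce both sides of the identity to the Hausdorff dimension of a single auxiliary set living inside the unique switch region, exploiting the exact overlap $f_2\circ f_4(K)=f_3\circ f_1(K)$ that defines it. The only non-trivial overlap here is $C=f_2(K)\cap f_3(K)=[3\lambda-\lambda^2,3\lambda]\cap K$; since $0<\lambda<\tfrac{5-\sqrt{21}}{2}<\tfrac14$ one has $\lambda<3\lambda-\lambda^2$ and $3\lambda<1-\lambda$, so $C$ is disjoint from $f_1(K)$ and from $f_4(K)$. Write $C_2:=\{z\in C: z \text{ has exactly two codings}\}$. My goal is to prove the two equalities $\dim_{H}(U_2)=\dim_{H}(C_2)$ and $\dim_{H}(C_2)=\dim_{H}\big(U_1\cap(U_1+1-\lambda)\big)$, which together give the lemma.

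For the second equality I would analyse a single point $z\in C$. Because $C$ meets only $f_2(K)$ and $f_3(K)$, every coding of $z$ begins with the digit $2$ or the digit $3$; moreover $T_2(z)=:w\in f_4(K)=[1-\lambda,1]\cap K$ and $T_3(z)=:v\in f_1(K)=[0,\lambda]\cap K$, and the exact-overlap relation $f_2(w)=z=f_3(v)$ forces $w=v+1-\lambda$. Splitting the codings of $z$ according to their first digit shows that the number of codings of $z$ equals the number of codings of $w$ plus that of $v$. Hence $z\in C_2$ if and only if both $w$ and $v$ are univoque, i.e. iff $w\in U_1$ and $w-(1-\lambda)=v\in U_1$, that is iff $w\in U_1\cap(U_1+1-\lambda)$. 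Thus the affine similarity $T_2$ maps $C_2$ bijectively onto $U_1\cap(U_1+1-\lambda)$, and since similarities preserve Hausdorff dimension, the second equality follows.

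For the first equality I would track the first visit of an orbit to $C$. If $x\in U_2$, then its two codings agree on a common prefix and first differ at some step $n_0+1$; the common value $z:=T_{i_1\cdots i_{n_0}}(x)$ must lie in a switch region, hence in $C$, while for every earlier time the orbit lies in a non-switch block, which is contained in a single $f_k(K)$ and therefore forces its digit. Consequently the prefix $i_1\cdots i_{n_0}$ is uniquely determined by $x$, the point $z$ inherits exactly two codings (so $z\in C_2$), and $x=f_{i_1}\circ\cdots\circ f_{i_{n_0}}(z)$. This exhibits $U_2\subset\bigcup_{n\ge0}\bigcup_{(i_1\cdots i_n)}f_{i_1}\circ\cdots\circ f_{i_n}(C_2)$, a countable union of similarity copies of $C_2$; since also $C_2\subset U_2$ (empty prefix), countable stability and similarity-invariance of Hausdorff dimension yield $\dim_{H}(U_2)=\dim_{H}(C_2)$.

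The routine points are the additivity of the coding count at $z$ and the translation relation $w=v+1-\lambda$, both immediate from the exact overlap and compatible with Lemma \ref{univoque+}. The step needing the most care is the claim that the prefix of a two-coding point is forced: this rests on the fact that, away from $C$, every block of the Markov partition lies in a unique $f_k(K)$. Here I would invoke Proposition \ref{non unique prop} together with the reformulation \eqref{univoque reformulation111} (an orbit avoids $C$ forever precisely when its continuation is univoque) to be sure that ``exactly two codings'' corresponds to the orbit meeting $C$ exactly once, with the meeting point in $C_2$, rather than to some more degenerate pattern of branching.
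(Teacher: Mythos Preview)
Your proof is correct and follows essentially the same route as the paper's. The paper establishes the set identity $U_2=\bigcup_{n}\bigcup_{(i_1\cdots i_n)\in D} f_{i_1\cdots i_n}\circ f_2\big(U_1\cap(U_1+1-\lambda)\big)$ directly, which is just your two steps combined (your auxiliary set $C_2$ is precisely $f_2\big(U_1\cap(U_1+1-\lambda)\big)$); introducing $C_2$ explicitly, as you do, is a slightly tidier organisation of the same argument.
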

\begin{proof}
It is sufficient to prove that 
$$U_2=\cup_{n=1}^{\infty}\cup_{(i_1i_2\cdots i_n)\in D } f_{i_1i_2\cdots i_n}\circ f_2( U_1\cap  (U_1+1-\lambda)),$$
where $D$ is the collection of all possible finite blocks appearing in unique codings of $K$. 

For any $x\in U_2 $, there exists a finite block $(i_1i_2\cdots i_n)\in D$ such that $T_{i_1i_2\cdots i_n}(x)$ falls into $f_2(K)\cap f_3(K)$ for the first time, i.e.  $T_{i_1\cdots i_k}(x)\notin f_2(K)\cap f_3(K)$ for $0\leq k\leq n-1$, and $T_{i_1i_2\cdots i_n}(x)\in f_2(K)\cap f_3(K)$.  Here when $k=0$,  $T_{i_0}$ means the identity map. 
 Since $x$ has exactly two codings, it follows that 
$T_2(T_{i_1i_2\cdots i_n}(x)), T_3(T_{i_1i_2\cdots i_n}(x))\in U_1$. By the definition of $T_i,1\leq  i\leq 4$,   we have $T_2(x)=T_3(x)+1-\lambda$.  Hence, $T_3(T_{i_1i_2\cdots i_n}(x))+1-\lambda=T_2(T_{i_1i_2\cdots i_n}(x))$,
which yields $T_2(T_{i_1i_2\cdots i_n}(x))\in U_1\cap (U_1+1-\lambda),$ i.e. $x\in f_{i_1i_2\cdots i_n}\circ f_2( U_1\cap  (U_1+1-\lambda))$. The converse inclusion can be proved similarly. 
\end{proof}

Lemma \ref{univoque+} together with Lemma \ref{intersection} imply the following lemma.
\begin{Lemma}\label{Translation}
$U_1\cap (U_1+1-\lambda)$ is  exactly all the univoque points in $f_4(K)$, i.e.
$$U_1\cap (U_1+1-\lambda)=\{x\in K:  x\,\,\mbox{has a unique coding which has form} \,\, (4x_2x_3\cdots) \},
$$
where $(x_2x_3\cdots)$ is a unique coding. Moreover,
$$\dim_{H}(U_2)=\dim_{H}(U_1\cap (U_1+1-\lambda))=\dim_{H}(U_1).$$
\end{Lemma}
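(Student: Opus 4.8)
The plan is to prove the set equality first and then read off the dimension statement, since the first equality in the display is exactly Lemma \ref{intersection} and the second will be a formal consequence. The key geometric input I would establish at the outset is that the two extreme blocks are \emph{isolated} copies of the first and fourth images: $[0,\lambda]\cap K=f_1(K)$ and $[1-\lambda,1]\cap K=f_4(K)$, and each of these is disjoint from the other three images $f_i(K)$. All the separations reduce to inequalities among the endpoints $0,\lambda,2\lambda,3\lambda,3\lambda-\lambda^2,4\lambda-\lambda^2,1-\lambda,1$, and the only nontrivial one is $f_3(K)\cap f_4(K)=\varnothing$, i.e. $4\lambda-\lambda^2<1-\lambda$, equivalently $\lambda^2-5\lambda+1>0$. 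This is precisely what the hypothesis $\lambda<\dfrac{5-\sqrt{21}}{2}$ guarantees (the other separations, e.g. $3\lambda<1-\lambda$, are then automatic). The consequence I need is that every coding of a point $x\in f_4(K)$ is forced to begin with the digit $4$, so that deleting the leading $4$ gives a bijection between codings of $x$ and codings of $T_4(x)$. In particular $x\in f_4(K)$ is univoque if and only if $T_4(x)\in U_1$, which identifies the univoque points of $f_4(K)$ with $f_4(U_1)$ and with the displayed set $\{x: x$ has a unique coding $(4x_2x_3\cdots)$ and $(x_2x_3\cdots)$ is unique$\}$.

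For the set equality $U_1\cap(U_1+1-\lambda)=f_4(U_1)$ I would invoke the translation bijection from Lemma \ref{univoque+}, which carries $[0,\lambda]\cap K=f_1(K)$ onto $[1-\lambda,1]\cap K=f_4(K)$ via $y\mapsto y+1-\lambda$ and preserves the number of codings. For the inclusion $\subset$, take $x\in U_1\cap(U_1+1-\lambda)$ and write $x=y+1-\lambda$ with $y\in U_1$; since $x\leq 1$ forces $y\leq\lambda$, we have $y\in[0,\lambda]\cap K=f_1(K)$, hence $x\in f_4(K)$, and $x\in U_1$, so $x$ is a univoque point of $f_4(K)$. For $\supset$, a univoque $x\in f_4(K)$ satisfies $y:=x-(1-\lambda)\in f_1(K)$ and $y$ again has exactly one coding by Lemma \ref{univoque+}, so $y\in U_1$; thus $x\in U_1+1-\lambda$ while also $x\in U_1$. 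This proves the first assertion of the lemma.

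The dimension chain then follows with no further work. The identity $\dim_{H}(U_2)=\dim_{H}(U_1\cap(U_1+1-\lambda))$ is Lemma \ref{intersection}, and since I have just shown $U_1\cap(U_1+1-\lambda)=f_4(U_1)$, which is the image of $U_1$ under the similarity $f_4$ of ratio $\lambda>0$, Hausdorff dimension is preserved and $\dim_{H}(U_1\cap(U_1+1-\lambda))=\dim_{H}(f_4(U_1))=\dim_{H}(U_1)$. Combining the two equalities yields the claimed $\dim_{H}(U_2)=\dim_{H}(U_1\cap(U_1+1-\lambda))=\dim_{H}(U_1)$.

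The step I expect to be the main obstacle is the geometric bookkeeping of the first paragraph: one must check, using the precise bound $\lambda<\dfrac{5-\sqrt{21}}{2}$, that $f_4(K)$ is genuinely separated from $f_1(K),f_2(K),f_3(K)$, so that the leading coding digit of any $x\in f_4(K)$ is forced to be $4$. Once this separation is secured, the first-digit bijection, the translation bijection of Lemma \ref{univoque+}, and the similarity-invariance of Hausdorff dimension combine routinely to give the result.
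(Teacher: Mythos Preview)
Your argument is correct and is exactly the fleshing-out the paper has in mind: the paper's own proof is the single sentence ``Lemma \ref{univoque+} together with Lemma \ref{intersection} imply the following lemma,'' and you have supplied precisely that implication, via the isolation of $f_4(K)$ (giving $U_1\cap f_4(K)=f_4(U_1)$), the translation bijection of Lemma \ref{univoque+} (giving $U_1\cap(U_1+1-\lambda)=f_4(U_1)$), and similarity-invariance of Hausdorff dimension together with Lemma \ref{intersection} for the dimension chain.
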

 \begin{Lemma}\label{one side}
 For any $k\geq 1$
$\dim_{H}(U_{2^k})\leq \dim_{H}(U_1)$.
 \end{Lemma}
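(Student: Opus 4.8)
The plan is to prove the slightly stronger monotonicity statement
$\dim_{H}(U_{2^k})\le \dim_{H}(U_{2^{k-1}})$ for every $k\ge 1$ (with the convention $U_{2^0}=U_1$), by generalizing the first-hit decomposition used in Lemma~\ref{intersection}, and then to chain these inequalities down to $U_1$. The engine is the observation that the only switch region of this IFS is $C=f_2(K)\cap f_3(K)$ (all other pairwise intersections $f_i(E)\cap f_j(E)$ are empty for $0<\lambda<\tfrac{5-\sqrt{21}}{2}$), so that outside $C$ every orbit has a forced continuation, and each visit to $C$ multiplies the number of codings by exactly two.

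First I would record the doubling relation. Fix $x\in U_{2^k}$ with $k\ge 1$. Since $x\notin U_1$, its orbit must enter $C$; let $n$ be the first time it does, so that the prefix $(i_1\cdots i_n)$ is forced (each intermediate orbit point lies in a unique piece $f_i(K)$ and hence has a unique continuation, cf.\ Proposition~\ref{non unique prop}) and $z:=T_{i_1\cdots i_n}(x)\in C$. Writing $y=T_2(z)$ and $y'=T_3(z)$, the exact overlap $f_2\circ f_4=f_3\circ f_1$ gives $z\in f_2(f_4(K))=f_3(f_1(K))$, whence $y\in f_4(K)=[1-\lambda,1]\cap K$ and $y'\in f_1(K)=[0,\lambda]\cap K$ with $y'=y-(1-\lambda)$, since $T_2(z)=T_3(z)+(1-\lambda)$. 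The codings of $x$ split according to whether the $(n+1)$-st digit is $2$ or $3$, so their number equals the number of codings of $y$ plus the number of codings of $y'$; by Lemma~\ref{univoque+} these two counts are equal, so this number is twice the number of codings of $y$. As it equals $2^{k}$, we conclude that $y$ has $2^{k-1}$ codings, i.e.\ $y\in U_{2^{k-1}}$.

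Next I would assemble the covering. The previous step shows $x=f_{i_1\cdots i_n}\circ f_2(y)$ with $y\in U_{2^{k-1}}$ and $(i_1\cdots i_n)$ ranging over the countable family $D$ of forced finite prefixes, so that
\begin{equation*}
U_{2^k}\subseteq \bigcup_{n\ge 0}\ \bigcup_{(i_1\cdots i_n)\in D} f_{i_1\cdots i_n}\circ f_2\big(U_{2^{k-1}}\big).
\end{equation*}
Each map $f_{i_1\cdots i_n}\circ f_2$ is a similarity, and Hausdorff dimension is invariant under similarities and countably stable; therefore $\dim_{H}(U_{2^k})\le \dim_{H}(U_{2^{k-1}})$. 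Iterating from $k$ down to $0$ yields $\dim_{H}(U_{2^k})\le\dim_{H}(U_1)$, as required.

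The step I expect to require the most care is the justification that the prefix before the first visit to $C$ is genuinely forced and that the first-hit decomposition accounts for all codings of $x$ without double counting: one must check that no intermediate orbit point lies in an overlap (using that $C$ is the \emph{unique} switch region here) and that the two branches continuing through $T_2$ and $T_3$ are disjoint, which holds because they differ in their first new digit. A secondary point worth stating explicitly is that the doubling relation automatically forces $y$ into $U_{2^{k-1}}$, so the argument does not presuppose the classification $U_i=\varnothing$ for $i\ne 2^{s}$ that Theorem~\ref{k=1} is aiming to establish.
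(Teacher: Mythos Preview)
Your argument is correct, but the paper reaches the same conclusion by a much shorter and more elementary route. Instead of stepping down one level at a time via the inclusion $U_{2^k}\subset\bigcup f_{i_1\cdots i_n}\circ f_2(U_{2^{k-1}})$, the paper simply asserts the direct inclusion
\[
U_{2^k}\subset \bigcup_{n=1}^{\infty}\ \bigcup_{(i_1\cdots i_n)\in\{1,2,3,4\}^n} f_{i_1\cdots i_n}(U_1),
\]
from which the dimension bound follows immediately by countable stability. The reason this inclusion holds is that if $x$ has only finitely many codings, then following \emph{any} one of its codings $(i_n)$, the number of codings of $T_{i_1\cdots i_n}(x)$ is nonincreasing in $n$ and must eventually stabilise at $1$ (if it stabilised at $m\ge 2$, every subsequent digit would be forced, contradicting $m\ge 2$), so some iterate lies in $U_1$.

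Compared with yours, the paper's argument needs neither the first-hit bookkeeping nor Lemma~\ref{univoque+}, and in fact gives $\dim_H(U_m)\le\dim_H(U_1)$ for every finite $m$ without any reference to the power-of-two structure. Your approach, on the other hand, yields the finer monotonicity $\dim_H(U_{2^k})\le\dim_H(U_{2^{k-1}})$ and essentially reproduces the mechanism behind Lemma~\ref{intersection} at each step; this is perfectly valid but is more than what is required for the lemma as stated.
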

 \begin{proof}
 The lemma immediately follows from the following inclusion:
 $$U_{2^k}\subset \cup^{\infty}_{n=1}\cup_{(i_1i_2\cdots i_n)\in\{1,2,3,4\}^n}f_{i_1i_2\cdots i_n}(U_1).$$
 \end{proof}
  We  now prove by induction that for any $k\geq 2$, $\dim_{H}(U_{2^k})=\dim_{H}(U_1)$.
 \begin{Lemma}\label{===}
 For any $k\geq 1$, we have
 $f_2\circ f_4 (U_{2^{k}})\subset U_{2^{k+1}}.$
 \end{Lemma}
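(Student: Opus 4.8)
The plan is to track exactly how the coding multiplicity propagates through the maps $f_1,f_4$ and through the single overlap $f_2(K)\cap f_3(K)$, exploiting the exact-overlap identity $f_2\circ f_4=f_3\circ f_1$ that underlies this example.

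First I would record two separation facts valid for $0<\lambda<\tfrac{5-\sqrt{21}}{2}$. From the first-iteration picture, $f_1(E)=[0,\lambda]$ meets none of $f_2(E),f_3(E),f_4(E)$, and $f_4(E)=[1-\lambda,1]$ meets none of $f_1(E),f_2(E),f_3(E)$; the last nonoverlap, $4\lambda-\lambda^2\le 1-\lambda$, is precisely where the bound on $\lambda$ enters, since it is equivalent to $\lambda^2-5\lambda+1\ge 0$. Consequently, by Lemma \ref{Dynamical lemma}, every coding of $f_1(x)$ has first digit $1$ and every coding of $f_4(x)$ has first digit $4$, and deleting that leading digit is a bijection onto the codings of $x$. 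Hence if $x\in U_{2^k}$, then $f_1(x)$ and $f_4(x)$ each have exactly $2^k$ codings.

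Next, set $y:=f_2\circ f_4(x)$. The identity $f_2\circ f_4=f_3\circ f_1$ gives $y=f_3\circ f_1(x)$, so $y\in f_2(K)\cap f_3(K)$. I would then check that $y$ lies in no other piece: since $y\in f_2(E)\cap f_3(E)\subset[3\lambda-\lambda^2,3\lambda]$, the inequality $2\lambda>\lambda$ excludes $f_1(E)=[0,\lambda]$, while $3\lambda<1-\lambda$ (which holds because $\lambda<\tfrac14$) excludes $f_4(E)=[1-\lambda,1]$. Therefore, by Lemma \ref{Dynamical lemma} (equivalently Proposition \ref{non unique prop}), every coding of $y$ begins with $2$ or with $3$, and these are the only options. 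The two families are then counted separately: a coding of $y$ beginning with $2$ corresponds, after deleting the leading $2$, exactly to a coding of $f_4(x)$, giving $2^k$ of them; a coding beginning with $3$ corresponds, after deleting the leading $3$, exactly to a coding of $f_1(x)$, giving another $2^k$. Since the two families are disjoint (they differ in the first digit) and exhaust all codings of $y$, the point $y$ has precisely $2^k+2^k=2^{k+1}$ codings, i.e. $y\in U_{2^{k+1}}$. As $x\in U_{2^k}$ was arbitrary, this yields $f_2\circ f_4(U_{2^k})\subset U_{2^{k+1}}$.

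The only delicate points — and where I expect to spend the care — are the two separation claims: verifying that $f_1(K)$ and $f_4(K)$ each meet no other cylinder, so that prepending $1$ or $4$ is a genuine bijection on coding sets, and that the overlap at $y$ is confined to $f_2(K)\cap f_3(K)$ with no spurious third possibility for the leading digit. Both reduce to the elementary inequalities above, all controlled by the hypothesis $\lambda<\tfrac{5-\sqrt{21}}{2}$; once they are in place, the bijections between coding sets, and hence the exact doubling of the multiplicity, are immediate.
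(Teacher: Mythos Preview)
Your proof is correct and follows essentially the same route as the paper's. The only cosmetic difference is that the paper states the separation at the second level---$f_2\circ f_4(K)$ meets no second-level cylinder $f_i\circ f_j(K)$ other than $f_3\circ f_1(K)$, forcing the first two digits to be $24$ or $31$---whereas you obtain the same conclusion in two first-level steps (first digit is $2$ or $3$; then the isolation of $f_4(K)$ and $f_1(K)$ pins down the second digit and the coding count); your version is more explicit about the inequalities on $\lambda$ that underlie these separations.
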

 \begin{proof}
Given any $x\in f_2\circ f_4 (U_{2^{k}})$, since $f_2\circ f_4=f_3\circ f_1$, and $f_2\circ f_4(K)$ does not intersect with $f_i\circ f_j (K)\in \cup_{(p,q)\notin\{ (2,4),(3,1)\}}\{f_p\circ f_q(K)\}$, it follows that the first two digits of $x$ should be $24$ or $31$.  Therefore, 
 $$x=f_2\circ f_4(y)=f_3\circ f_1(y),$$
 where $y\in U_{2^{k}}$. It is easy to see that  $T_2(x)=f_4(y)$ and $T_3(x)=f_1(y)$ have exactly $2^k$ expansions, respectively. 
 Hence, $x$ has  precisely $2^{k+1}$ expansions.
 \end{proof}
The first statement of Theorem \ref{k=1} now is a straightforward consequence of Lemma \ref{===}, \ref{one side} and \ref{Translation}.
It remains to prove that $U_i=\emptyset, i\neq 2^{k}, k\geq1$. 
\begin{Lemma}\label{odd}
 $U_{2i+1}=\emptyset, i\in  \mathbb{N}$.
\end{Lemma}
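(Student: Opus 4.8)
The plan is to prove the sharper statement that every $x\in K$ has either a unique coding or an \emph{even} number of codings; the assertion $U_{2i+1}=\emptyset$ for $i\geq 1$ is then immediate. Write $c(x)$ for the number of codings of $x$. The starting observation is that the only non-trivial first-generation overlap is the switch region $C:=f_2(K)\cap f_3(K)=f_2\circ f_4(K)=f_3\circ f_1(K)$. Indeed, since $0<\lambda<\tfrac{5-\sqrt{21}}{2}$, the intervals $f_1(E),\dots,f_4(E)$ (with $E=[0,1]$ the convex hull) are pairwise disjoint except for $f_2(E)\cap f_3(E)=[3\lambda-\lambda^2,3\lambda]$; in particular the bound $4\lambda-\lambda^2<1-\lambda$ is exactly the hypothesis on $\lambda$, and $3\lambda<1-\lambda$ holds as well, so $C\subset[3\lambda-\lambda^2,3\lambda]$ is disjoint from $f_1(K)=[0,\lambda]\cap K$ and from $f_4(K)=[1-\lambda,1]\cap K$. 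Consequently, if $x\notin C$ then $x$ lies in a unique piece $f_j(K)$, so every coding of $x$ begins with the same digit $j$, and deleting this digit gives a bijection between the codings of $x$ and those of $T_j(x)$; hence $c(x)=c(T_j(x))$.

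Branching can therefore occur only at $C$. Fix $z\in C$ and let $y\in K$ be the unique point with $z=f_2\circ f_4(y)=f_3\circ f_1(y)$, which exists because $f_2\circ f_4=f_3\circ f_1$ is injective with image $C$. Since $z\in f_2(K)\cap f_3(K)$ but $z\notin f_1(K)\cup f_4(K)$, every coding of $z$ begins with $2$ or with $3$, and these two families are disjoint, so $c(z)=c(T_2(z))+c(T_3(z))=c(f_4(y))+c(f_1(y))$. Now $f_1(y)\in f_1(K)$ and $f_4(y)\in f_4(K)$, and each of these pieces meets no other first-generation piece; arguing exactly as in the previous paragraph (or invoking Lemma \ref{univoque+}), $c(f_1(y))=c(y)=c(f_4(y))$. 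Therefore $c(z)=2c(y)$, so every point of the switch region has an even number of codings.

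Finally, take an arbitrary $x\in K$ and iterate the reduction $x\mapsto T_j(x)$ from the first paragraph along its forced orbit, which is well defined and unique as long as the current point stays outside $C$, with $c$ unchanged at each step. Two cases arise: either the orbit never enters $C$, in which case no digit is ever chosen and $x$ has a unique coding, so $c(x)=1$; or the orbit meets $C$ for the first time at some point $z$, and then $c(x)=c(z)=2c(y)$ is even. In either case $c(x)\in\{1\}\cup 2\mathbb{N}$, which proves that $U_{2i+1}=\emptyset$ for all $i\geq 1$.

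The main obstacle is making the reduction bijections exact rather than correct only up to a countable error set: one must verify that the first digit is genuinely forced whenever $x\notin C$, that no deeper-generation overlaps are created (this follows from self-similarity, since $C$ is the unique first-generation overlap and each $f_j$ is injective), and that points landing on block endpoints are still treated correctly by the membership criterion ``$x\in f_j(K)$''. Once these bookkeeping points are settled, the counting identity $c(z)=2c(y)$ on the switch region does all the work.
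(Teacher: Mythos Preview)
Your proof is correct and follows essentially the same approach as the paper: both arguments track the orbit until it first hits the switch region $C$, then use the symmetry $c(T_2(z))=c(T_3(z))$ (the paper via Lemma~\ref{univoque+}, you via $c(f_1(y))=c(y)=c(f_4(y))$) to conclude that $c(z)$ is even. Your formulation $c(z)=2c(y)$ is a slightly cleaner packaging of the paper's equation $a_0=2k+1-a_0$, but the underlying idea is identical.
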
 
\begin{proof}
We start with proving $U_{3}=\emptyset$. 
If $x\in U_3 \neq \emptyset $, then we can find some  finite block $(i_1i_2\cdots i_n)$ such that $T_{i_1i_2\cdots i_n}(x)$ falls into $f_2(K)\cap f_3(K)$ for the first time. Since $x\in U_3$, it follows that either $T_3(T_{i_1i_2\cdots i_n}(x))$ has a unique coding and $T_2(T_{i_1i_2\cdots i_n}(x))$ has exactly two codings, or $T_2(T_{i_1i_2\cdots i_n}(x))$ has a unique coding and $T_3(T_{i_1i_2\cdots i_n}(x))$ has exactly two codings. We prove  that these two cases are impossible. By Lemma \ref{univoque+}, it is enough to consider the case $T_3(T_{i_1i_2\cdots i_n}(x))$ has a unique coding, and  $T_2(T_{i_1i_2\cdots i_n}(x))$ has two codings. Note that 
$$ T_3((T_{i_1i_2\cdots i_n}(x)))+1-\lambda =T_2(T_{i_1i_2\cdots i_n}(x)).$$
As $T_{i_1i_2\cdots i_n}(x)\in f_2(K)\cap f_3(K)$, we have $T_3((T_{i_1i_2\cdots i_n}(x)))\in [0,\lambda]\cap K$, by Lemma \ref{univoque+}, we know that $$ T_3((T_{i_1i_2\cdots i_n}(x)))+1-\lambda =T_2(T_{i_1i_2\cdots i_n}(x))$$ has a unique coding, which leads to a contradiction. Thus,
$U_3=\emptyset$.  For a general odd number $2k+1$, the proof is similar. If $U_{2k+1}\neq \emptyset$, then there exists a point $x\in U_{2k+1}$ and  a finite sequence $(i_1i_2\cdots i_n)$ such that $T_{i_1i_2\cdots i_n}(x)$ falls into $f_2(K)\cap f_3(K)$ for the first time.  Then, $T_2(T_{i_1i_2\cdots i_n}(x))$ has exactly $a_0$ expansions while $T_3(T_{i_1i_2\cdots i_n}(x))$ has $2k+1-a_0$ expansions, where $1\leq a_0\leq 2k$. 
However, by the same argument as above $T_2(T_{i_1i_2\cdots i_n}(x))$ and $T_3(T_{i_1i_2\cdots i_n}(x))$ have exactly the same number of different expansions, leading to $a_0=2k+1-a_0$ which is impossible. Hence,  $U_{2k+1}=\emptyset$.

%Now by induction, suppose  $U_{2i+1}=\emptyset,1\leq  i\leq k$. If $U_{2k+3}\neq \emptyset$, then we can find a point $x\in U_{2k+3}$ and  a finite %sequence $(i_1i_2\cdots i_n)$ such that $T_{i_1i_2\cdots i_n}(x)$ falls into $f_2(K)\cap f_3(K)$ for the first time. Then $T_2(T_{i_1i_2\cdots i_n}(x))$ %has  exactly $a_0$ expansions while $T_3(T_{i_1i_2\cdots i_n}(x))$ has $2k+3-a_0$ expansions, where $1\leq a_0\leq 2k+2$. 
%Since $2k+3$ is an odd number, $a_0$ or $2k+3-a_0$ should be an odd number.  We suppose $a_0\geq 3$ is odd, then  it follows by induction  that %$U_{a_0}=\emptyset$, which yields $U_{2k+3}=\emptyset$. If $a_0=1$, then for the similar reason (when we prove $U_3=\emptyset$) we can also %find a contradiction.
%Therefore, we prove $U_{2k+3}=\emptyset$. 
 \end{proof}
\begin{Lemma}
$U_{2i}=\emptyset, i\geq 3$, where $2i\neq 2^s$ for any $s\geq 1$. 
\end{Lemma}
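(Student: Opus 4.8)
The plan is to reuse the ``halving'' mechanism already exploited in the proof of Lemma~\ref{odd} and combine it with a strong induction. The central observation is that, for any $x\in K$ that is neither univoque nor has uncountably many codings, the coding tree of $x$ branches only when the orbit hits the unique switch region $C=f_2(K)\cap f_3(K)$. Until the first such hit the orbit is deterministic, because any orbit point lying outside $C$ belongs to exactly one of $f_1(K),\ldots,f_4(K)$ (all other pairwise intersections being empty), so the next digit is forced. Hence there is a well-defined first-hit point $y=T_{i_1\cdots i_n}(x)\in C$, and prepending the forced block $i_1\cdots i_n$ gives a bijection between codings of $y$ and codings of $x$; in particular the two counts agree. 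Since $C$ is disjoint from $f_1(K)\cup f_4(K)$ (here one uses $\lambda<\tfrac14$, which follows from $\lambda<\tfrac{5-\sqrt{21}}{2}$), the first digit of any coding of $y$ is $2$ or $3$, whence the number of codings of $y$ is the sum of those of $T_2(y)\in E$ and $T_3(y)\in A$. The relation $T_2(y)=T_3(y)+(1-\lambda)$ together with Lemma~\ref{univoque+} forces these two counts to be equal. Writing $M$ for their common value yields the key identity: every point of $K$ with exactly $N\ge 2$ codings satisfies $N=2M$, where $M$ is the number of codings of the genuine point $T_3(y)\in K$ and $1\le M<N$.

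With this identity in hand I would argue by strong induction on the even integers that are not powers of $2$. Suppose, for contradiction, that $N$ is the least even integer which is not a power of $2$ and for which $U_N\neq\emptyset$; note $N\ge 6$. Pick $x\in U_N$ and let $M=N/2$ be the count produced above, so that $U_M\neq\emptyset$. If $M$ were a power of $2$ then $N=2M$ would be one too, contradicting the choice of $N$; hence $M$ is not a power of $2$, and since $N\ge 6$ we have $M\ge 3$.

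It remains to derive a contradiction according to the parity of $M$. If $M$ is odd, then $M\ge 3$ together with $U_M\neq\emptyset$ contradicts Lemma~\ref{odd}. If $M$ is even, then $M$ is an even non-power-of-$2$ with $M<N$ and $U_M\neq\emptyset$, contradicting the minimality of $N$. Either way we reach a contradiction, so no such $N$ exists; in particular $U_{2i}=\emptyset$ whenever $i\ge 3$ and $2i\neq 2^{s}$ for every $s\ge 1$.

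The routine parts are the verification that $C$ is separated from $f_1(K)$ and $f_4(K)$ and the bookkeeping that the coding tree branches exactly at hits of $C$; both follow from the geometry recorded in the figure and from Proposition~\ref{non unique prop}. The only genuinely delicate point, which I expect to be the main obstacle, is justifying the additivity of codings at the first-hit point $y$ for a point with \emph{finitely} many codings: one must ensure that passing to $T_2(y),T_3(y)$ neither loses nor double-counts any coding, and that $M$ is a bona fide coding count of an actual point of $K$, so that Lemma~\ref{odd} and the induction hypothesis genuinely apply to it. Once the identity $N=2M$ is secured, the inductive contradiction is immediate.
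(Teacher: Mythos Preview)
Your proposal is correct and follows essentially the same approach as the paper: both arguments rest on the halving identity at the first hit of the switch region (so that a point with $N$ codings produces a point with $N/2$ codings) and then reduce to Lemma~\ref{odd}. The only difference is organizational---you argue via a minimal counterexample among even non-powers-of-two, whereas the paper writes $2i=2^{\ell}(2m+1)$ with $m\ge 1$ and inducts on $\ell$ for fixed $m$---but these are equivalent packagings of the same descent.
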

\begin{proof} By assumption $2i=2^{\ell}(2m+1)$ for some $\ell\ge 1$ and $m\ge 1$. For each fixed $m$, the proof is done by induction on $\ell$. For $\ell =1$ we have $2i=2(2m+1)$.  If $x\in U_{2(2m+1)}$, then there exists a finite sequence $(j_1j_2\cdots j_n)$ such that $T_{j_1j_2\cdots j_n}(x)$  falls into $f_2(K)\cap f_3(K)$ for the first time. By a similar argument as in Lemma \ref{odd}, the points  $T_2(T_{j_1j_2\cdots j_n}(x))$ and $T_3(T_{j_1j_2\cdots j_n}(x))$ have exactly $i=2m+1$ expansions, which is impossible since $U_{2m+1}=\emptyset$.  Suppose it is true for $2i=2^{\ell}(2m+1)$, i.e. $U_{2^{\ell}(2m+1)}=\emptyset$, and assume $x\in U_{2^{{\ell}+1}(2m+1)}$. Then there exists a finite sequence $(j_1j_2\cdots j_n)$ such that $T_{j_1j_2\cdots j_n}(x)$  falls into $f_2(K)\cap f_3(K)$ for the first time, and as above $T_2(T_{j_1j_2\cdots j_n}(x))$ and $T_3(T_{j_1j_2\cdots j_n}(x))$ have exactly $i=2^{\ell}(2m+1)$ expansions, which is impossible since $U_{2^{\ell}(2m+1)}=\emptyset$. Thus,
$U_{2^{{\ell}+1}(2m+1)}=\emptyset$.
\end{proof}
 Finally, we end the proof of Theorem \ref{k=1} with the following lemma.
\begin{Lemma}\label{countable111}
$U_{\aleph_0}=\emptyset$. 
 \end{Lemma}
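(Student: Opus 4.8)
The plan is to argue by contradiction: assume some $x\in K$ has exactly $\aleph_0$ codings and show it must in fact have uncountably many, which is absurd. The whole argument is organized around the fact that $C=f_2(K)\cap f_3(K)$ is the \emph{only} overlap of the system (for $0<\lambda<\frac{5-\sqrt{21}}{2}$ the convex hulls $f_1(E),\dots,f_4(E)$ are otherwise separated by gaps). Consequently, by Lemma \ref{Dynamical lemma}, along any orbit the next digit is uniquely forced as long as the current point lies outside $C\cap K$, and a branching into two admissible digits $2$ or $3$ can occur only when the orbit sits in $C$. Writing $N(y)$ for the number of codings of a point $y$, the first step is to record the basic recursion: if the forced orbit of $x$ never meets $C$ then $x\in U_F$ and $N(x)=1$; otherwise let $z\in C\cap K$ be the first hitting point, note that the prefix leading to $z$ is common to every coding of $x$, and that each coding then continues through digit $2$ (landing at $T_2(z)$) or digit $3$ (landing at $T_3(z)$), so that
\[
N(x)=N(T_2(z))+N(T_3(z)).
\]

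Next I would upgrade this to a symmetric recursion. Since $T_2(z)=T_3(z)+(1-\lambda)$ with $T_3(z)\in[0,\lambda]\cap K$ and $T_2(z)\in[1-\lambda,1]\cap K$, the digit-swapping bijection underlying Lemma \ref{univoque+} sets up a one-to-one correspondence between the codings of $T_3(z)$ and those of $T_2(z)$; this correspondence is cardinality-blind, so $N(T_2(z))=N(T_3(z))$ whether this count is finite, $\aleph_0$, or uncountable. Hence $N(x)=2\,N(T_3(z))$, and if $N(x)$ is infinite then \emph{both} children satisfy $N(T_2(z))=N(T_3(z))=\aleph_0$. Now I iterate: starting from $p_{\emptyset}:=x$, for every finite binary word $w$ with $N(p_w)=\aleph_0$ I let $z_w$ be the first point at which the forced orbit of $p_w$ meets $C$ and set $p_{w0}:=T_2(z_w)$, $p_{w1}:=T_3(z_w)$, each again with count $\aleph_0$. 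This produces a complete binary tree of genuine branch points inside the coding tree of $x$. Reading off, for each $\omega\in\{0,1\}^{\mathbb{N}}$, the coding obtained by taking branch $\omega_n$ at the $n$-th branch point (and filling in the forced segments in between), and observing that distinct $\omega$'s first disagree at some branch point and hence produce codings with different digits there, I obtain at least $2^{\aleph_0}$ distinct codings of $x$. This contradicts $N(x)=\aleph_0$, so $U_{\aleph_0}=\emptyset$.

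The crux of the whole argument, and the step I expect to be the main obstacle, is the passage from ``$N(x)$ infinite'' to ``\emph{both} branches infinite.'' Without the symmetry $N(T_2(z))=N(T_3(z))$ supplied by Lemma \ref{univoque+}, the recursion $N(x)=N(T_2(z))+N(T_3(z))=\aleph_0$ would be compatible with a ``comb'': one child univoque and the other carrying all the codings, sustaining a single infinite spine that sheds countably many finite branches and thus only $\aleph_0$ codings in total. It is precisely the translation identity $T_2(z)=T_3(z)+(1-\lambda)$ that forbids this. The remaining care is bookkeeping: justifying that every segment between consecutive branch points is genuinely forced (again because $C$ is the only overlap), and observing that the countably many orbits that eventually land on a partition endpoint cannot manufacture an $\aleph_0$-point either, so they may be absorbed into the exceptional set exactly as in Lemma \ref{lem:2.20}.
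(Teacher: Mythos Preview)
Your argument is correct and rests on the same mechanism as the paper's proof, though organized differently. The paper first observes (implicitly via K\"onig's lemma) that a point with infinitely many codings has an orbit hitting $C=f_2(K)\cap f_3(K)$ infinitely often, checks that such an orbit avoids the two boundary points $3\lambda-\lambda^2$ and $3\lambda$ (since from either of these one falls into the fixed orbit and never returns to $C$), and then uses the exact overlap $f_2\circ f_4=f_3\circ f_1$ to swap the block $24$ for $31$ independently at each of the infinitely many hits, producing $2^{\aleph_0}$ codings. Your recursion $N(x)=2\,N(T_3(z))$ encodes precisely the same identity---Lemma~\ref{univoque+} \emph{is} the statement $T_4T_2=T_1T_3$ rephrased as equality of coding counts---and your binary tree of branch points is the same uncountable family of codings, built inductively rather than by swapping along a fixed spine. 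Your packaging has the mild advantage that the boundary points of $C$ need no separate treatment: since the forced prefix gives $N(z_w)=N(p_w)=\aleph_0$, the branch point $z_w$ is automatically not one of the two endpoints (which have exactly two codings each). One small remark: your closing sentence about absorbing orbits that land on partition endpoints into an exceptional set as in Lemma~\ref{lem:2.20} is out of place here---you are proving that $U_{\aleph_0}$ is empty, not comparing it to a graph-directed set up to a countable error---and can simply be dropped.
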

 \begin{proof}
 Note that the switch region is $f_2\circ f_4(K)=f_3\circ f_1(K)=f_2(K)\cap f_3(K)$. We decompose the switch region by 
$$f_2(K)\cap f_3(K)=\bigg[\{3\lambda-\lambda^2\}\cup \{3\lambda\}\bigg]\cup \bigg[(f_2(K)\cap f_3(K))\setminus(\{3\lambda-\lambda^2\}\cup \{3\lambda\}) \bigg].$$
Note that $3\lambda-\lambda^2$ has exactly two codings $(241^{\infty})$ and $(31^{\infty})$, and
$3\lambda$ also has exactly two codings $(24^{\infty})$ and $(314^{\infty})$. Assume $x\in K$ has infinitely many codings, we will show $x$ has uncountably many codings. By hypothesis, we can find an orbit of $x$,
denoted  by $\{T^{n}(x)\}^{\infty}_{n=0}$, that hits the switch region infinitely many times. The orbit  $\{T^{n}(x)\}^{\infty}_{n=0}$ cannot hit $3\lambda-\lambda^2$ or $3\lambda$ as $3\lambda-\lambda^2$ and $3\lambda$ have two exactly codings, respectively.  Hence, $\{T^{n}(x)\}^{\infty}_{n=0}$ hits $ \{(f_2(K)\cap f_3(K))\setminus(\{3\lambda-\lambda^2\}\cup \{3\lambda\}) \}$ for infinitely many times. Since $f_2\circ f_4(K)=f_3\circ f_1(K)=f_2(K)\cap f_3(K)$, we can replace the finite block $24$ by $31$ for infinitely many times in the coding of $x$, implying that $x$ has uncountably many codings. So either $x$ has a finite number of expansions or $x$ has uncountably many, hence $U_{\aleph_0}=\emptyset$.
 \end{proof}
 Although for different self-similar sets,
 analyzing the multiple codings of $K$ may vary from each other, the main ideas, i.e. Lemmas \ref{intersection}, \ref{univoque+}, \ref{one side}, \ref{odd}, are very useful to study the multiple codings of $K$. In \cite{DJKL, DJKL1}, some generalizations are considered, and  a new proof of Theorem \ref{k=1} is given. 
\section{Dynamical discription of  $\tilde{U}_{\beta}$}
In  this section we  concentrate on  $\beta$-expansions, and 
give a new dynamical criterion  under which   $\tilde{U}_{F}$ is a subshift of finite type.  For simplicity, we substitute  $U_F$ and $\tilde{U}_{F}$ by  $U_{\beta}$ and $\tilde{U}_{\beta}$, respectively. 
The motivation of this section is to generalize a result of  de Vries  and   Komornik  regarding the calculation of  $\dim_{H}(U_{\beta})$, see \cite[Theorem 1.8]{MK}. We begin with  the definition of the greedy  and quasi-greedy expansions. 
\begin{Definition}
Let $1<\beta<2$, 
the \textbf{greedy map} $G:\mathcal{A}_{\beta}\to \mathcal{A}_{\beta},$ is defined by
\begin{equation*}
G(x)=\left\lbrace\begin{array}{cc}
                 \beta x\mod 1& x\in[0,1)\\

                 \beta x-1& x\in\Big[1, \frac{1}{\beta-1}\Big]
                \end{array}\right.
\end{equation*}
\end{Definition}
For any $n\geq 1$ and $x\in\mathcal{A}_{\beta}$, we define $a_n(x)=\lfloor\beta G^{n-1}(x)\rfloor$, where $\lfloor y\rfloor$ denotes the integer part of $y\in\mathbb{R}$.
We then  have
\begin{eqnarray*}
x&=&\dfrac{a_{1}(x)}{\beta}+\dfrac{G(x)}{\beta}=
 \dfrac{a_{1}(x)}{\beta}+\dfrac{a_2(x)}{\beta^2}+\dfrac{G^2(x)}{\beta^2}\\
&&\vdots\\
&=&\sum_{n=1}^{\infty}\dfrac{a_n(x)}{\beta^n}
\end{eqnarray*}
The sequence  $(a_n)_{n=1}^{\infty}\in\{0,1\}^{\mathbb{N}}$ generated by $G$ is called the \emph{greedy expansion}  for $x$ in base $\beta$ or \emph{greedy coding} for $x$ in base $\beta$. The orbit $\{G^{n}(x)\}_{n=1}^{\infty}$ is called the \emph{greedy orbit} of $x$ in base $\beta$.
In this section, we consider only $\dfrac{1+\sqrt{5}}{2}<\beta<2$ as $U_{\beta}=\{0,(\beta-1)^{-1}\}$ if $1<\beta\leq \dfrac{1+\sqrt{5}}{2}$, see \cite{GS}. 

Similarly, we can define the quasi-greedy orbit  of $1$, denoted it by $\{Q^{n}(1)\}_{n=1}^{\infty}.$
\setcounter{Definition}{1}
\begin{Definition}
If the greedy orbit of $1$ is infinite, we identify the  \textbf{quasi-greedy orbit  of $1$} with the greedy one, i.e.,  
$\{Q^{n}(1)\}_{n=1}^{\infty}=\{G^{n}(1)\}_{n=1}^{\infty}$. Otherwise, let $(a_1a_2\cdots a_n 0^{\infty})$ be the greedy  expansion of $1$, we define the quasi-greedy coding of $1$ by $(a_1a_2\cdots a_n^{-})^{\infty}$, where $a_n^{-}=a_n-1$.  In this case, the quasi-greedy orbit of $1$ is defined by 
$$Q^{i}(1)=(\sigma^{i}( a_1a_2\cdots a_n^{-})^{\infty}))_{\beta}, 1\leq i\leq n$$ 
and $Q^{kn+i}(1)=Q^{i}(1)$ for any $k\geq 0$, 
where $\sigma$ is the left shift, and  $(b_n)_{\beta}:=\sum_{n=1}^{\infty}b_n\beta^{-n}$. For simplicity, throughout this section we denote the quasi-greedy expansion of $1$ by $(\eta_i)_{i=1}^{\infty}$. 
\end{Definition}
\begin{Definition}
We say  \textbf{the quasi-greedy orbit of $1$ hits the point $\beta^{-1}(\beta-1)^{-1}$ ($\beta^{-1}$) for the first time }if there exists a minimal number $k\geq 1$ such that  $Q^{k}(1)=\beta^{-1}(\beta-1)^{-1}$  ($Q^{k}(1)=\beta^{-1}$) and $$Q^{i}(1)\in [0,\beta^{-1})\cup (\beta^{-1}(\beta-1)^{-1}, (\beta-1)^{-1}],\, 1\leq i\leq k-1.$$
\end{Definition}
Similarly, we can define the  quasi-greedy orbit of $1$ hits the interval $(\beta^{-1}, \beta^{-1}(\beta-1)^{-1})$ for the first time, i.e. there exists a smallest $n_0$ such that $Q^{k}(1)\notin (\beta^{-1}, \beta^{-1}(\beta-1)^{-1})$ for any $1\leq k\leq n_0-1$, and $Q^{n_0}(1)\in (\beta^{-1}, \beta^{-1}(\beta-1)^{-1}).$
\setcounter{Lemma}{3}
\begin{Lemma}\label{Remark}
If the quasi-greedy orbit of $1$ hits $\beta^{-1}(\beta-1)^{-1}$ for the first time, then the greedy orbit of $1$ is finite. Moreover there exists some $n_0$ such that the quasi-greedy expansion of $1$ is $$(\eta_i)=(a_1a_2\cdots a_{n_0} \overline{ a_1a_2\cdots a_{n_0})} ^{\infty}$$ for some $(a_1a_2\cdots a_{n_0})$, where $a_{n_0}=1$.
\end{Lemma}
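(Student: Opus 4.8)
The plan is to work entirely on the symbolic side. Write $\eta=(\eta_i)$ for the quasi-greedy expansion of $1$, put $y_i:=Q^i(1)=(\sigma^i\eta)_\beta=\sum_{j\ge1}\eta_{i+j}\beta^{-j}$, abbreviate the right endpoint of the switch region by $p:=\beta^{-1}(\beta-1)^{-1}$, and introduce the reflection $R(x):=(\beta-1)^{-1}-x$, which on sequences is digitwise complementation $\overline{c}_i:=1-c_i$, since $(\overline{c})_\beta=(\beta-1)^{-1}-(c)_\beta$. Note that $R$ is a decreasing involution fixing $[\beta^{-1},p]$ setwise and interchanging $[0,\beta^{-1})$ with $(p,(\beta-1)^{-1}]$, because $R(\beta^{-1})=p$ and $R(p)=\beta^{-1}$. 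I will use three standard facts about quasi-greedy expansions: every shift $\sigma^i\eta$ is again quasi-greedy; the quasi-greedy expansion of a given value is unique; and for a quasi-greedy $c$ one has the first-digit dictionary $c_1=1\Rightarrow (c)_\beta>\beta^{-1}$ and $c_1=0\Rightarrow (c)_\beta<p$, where the strictness comes from the fact that a quasi-greedy sequence is neither $10^\infty$ nor has $1^\infty$ as a tail.

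The heart of the argument is a reflection induction showing that the orbit of $1$ after time $k$ is the $R$-image of the orbit before time $k$. Concretely I will prove, for $0\le i\le k$, that $y_{k+1+i}=R(y_i)$, together with $\eta_{k+1+i}=1-\eta_i$ for $1\le i\le k$. The base case uses $y_k=p$: since $y_k\ge p$, having $\eta_{k+1}=0$ would force $y_k<p$ by the dictionary, so $\eta_{k+1}=1$, whence $y_{k+1}=\beta y_k-1=(\beta-1)^{-1}-1=R(1)=R(y_0)$. For the inductive step, the first-hitting hypothesis gives $y_i\notin[\beta^{-1},p]$ for $0\le i\le k-1$, so the dictionary pins $\eta_{i+1}$ down exactly ($\eta_{i+1}=0\iff y_i<\beta^{-1}$ and $\eta_{i+1}=1\iff y_i>p$); since $R$ swaps the two outer intervals, one reads off from $y_{k+1+i}=R(y_i)\notin[\beta^{-1},p]$ that $\eta_{k+1+i+1}=1-\eta_{i+1}$, and a one-line affine computation then yields $y_{k+1+(i+1)}=R(y_{i+1})$.

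Running the induction through $i=k$ gives $y_{2k+1}=R(y_k)=R(p)=\beta^{-1}$. Since $\sigma^{2k+1}\eta$ is quasi-greedy with value $\beta^{-1}$, the dictionary forces its first digit to be $0$, i.e. $\eta_{2k+2}=0$; hence $y_{2k+2}=\beta\cdot\beta^{-1}-0=1=y_0$, and uniqueness of the quasi-greedy expansion of $1$ gives $\sigma^{2k+2}\eta=\eta$, so $\eta$ is purely periodic with period $2(k+1)$. Setting $n_0:=k+1$ and $a_i:=\eta_i$, the digit relations $\eta_{k+1}=1$ and $\eta_{k+1+i}=1-\eta_i$ (valid for all $i$ by periodicity) give $\eta=(a_1\cdots a_{n_0}\,\overline{a_1\cdots a_{n_0}})^\infty$ with $a_{n_0}=\eta_{k+1}=1$, which is the asserted form. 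Finally, the greedy expansion of $1$ is finite: if it were infinite it would coincide with $\eta$, but a greedy expansion of $1$ obeys the strict Parry condition $\sigma^n\eta\prec\eta$ for all $n\ge1$, which is impossible for the purely periodic $\eta$ because $\sigma^{2(k+1)}\eta=\eta$.

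I expect the main obstacle to be the first-digit dictionary and, above all, getting every strict inequality right at the two boundary points $\beta^{-1}$ and $p$: these are exactly the places where a quasi-greedy sequence can behave differently from a greedy one, and they are what make the induction close at $i=k$ (where $y_k=p$ sits on the boundary) rather than break down. Once the dictionary and the involution property of $R$ are in place, the reflection induction and the periodicity conclusion are routine.
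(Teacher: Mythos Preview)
Your proof is correct and follows the same reflection-symmetry idea as the paper: the paper's argument is that $G^k(\bar 1)+G^k(1)=(\beta-1)^{-1}$ whenever the orbit avoids the switch region, so once the orbit hits $p$ and lands at $\bar 1=R(1)$, symmetry carries it to $\beta^{-1}$ after the same number of steps, forcing finiteness and the stated periodic form. Your version is in fact more careful than the paper's, as you supply the induction and the boundary-digit determinations (the ``first-digit dictionary'' at $\beta^{-1}$ and $p$) that the paper leaves implicit.
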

\begin{proof}
Note that if $G^{k}(1)\in[0,\beta^{-1}]\cup [\beta^{-1}(\beta-1)^{-1}, 1] $ for all $1\leq k\leq n$, then  $G^{k}(\bar{1})+G^{k}(1)=(\beta-1)^{-1}$,  where $\bar{1}=(\beta-1)^{-1}-1$. If the quasi-greedy orbit of $1$ hits $\beta^{-1}(\beta-1)^{-1}$ for the first time, then there exists $t_0\geq 1$ such that $G^{t_0}(1)=\beta^{-1}(\beta-1)^{-1}$. Hence, we have that $G^{t_0+1}(1)=\bar{1}$. By symmetry, after $t_0$ step the greedy orbit of $\bar{1}$ will hit $\beta^{-1}$ as $\beta^{-1}+\beta^{-1}(\beta-1)^{-1}=(\beta-1)^{-1}$. Hence the greedy orbit of $1$ is finite.  The second statement follows immediately from the first statement if we take $n_0=t_0+1$.  
\end{proof}
The following theorem characterizes the criteria of the unique expansions, the proof of this result can be found in \cite{MK} or  some references therein. 
\setcounter{Theorem}{4}
\begin{Theorem}\label{Uniquecodings}
 Let $(a_n)_{n=1}^{\infty}$ be a coding of $x\in [0, (\beta-1)^{-1}]$. Then $(a_n)_{n=1}^{\infty}\in \widetilde{U}_{\beta}$ if and only if 
 $$(a_{k+1}a_{k+2}\cdots)<(\eta_n)_{n=1}^{\infty}$$ wherever $a_k=0$, 
  $$\overline{(a_{k+1}a_{k+2}\cdots)}<(\eta_n)_{n=1}^{\infty}$$ wherever $a_k=1$. 
\end{Theorem}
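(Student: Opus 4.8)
The plan is to combine the dynamical reformulation of uniqueness from Section~2 with the classical lexicographic (Parry-type) comparison against the quasi-greedy expansion of $1$. Write $\pi(c)=\sum_{n\ge1}c_n\beta^{-n}$ for $c\in\{0,1\}^{\mathbb N}$, so that a coding $(a_n)$ of $x=\pi(a)$ has orbit points $t_k:=T_{a_1\cdots a_k}(x)=\pi(\sigma^k a)$ under the two branches $T_0(y)=\beta y$, $T_1(y)=\beta y-1$, and $I_0\cap I_1=[\beta^{-1},\beta^{-1}(\beta-1)^{-1}]$ is the switch region. First I would invoke Proposition~\ref{non unique prop} together with the reformulation \eqref{univoque reformulation111}, specialised to the two maps $f_0,f_1$, to record the purely geometric criterion: $(a_n)\in\tilde U_\beta$ if and only if $t_k\notin[\beta^{-1},\beta^{-1}(\beta-1)^{-1}]$ for every $k\ge0$. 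Indeed, an orbit point lying in the switch region is exactly a place where both digits $0$ and $1$ extend the coding, so avoiding the (closed) switch region at every step is the same as the digit being forced at every step.

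Second, I would turn this geometric condition into the two announced inequalities by reading off the digit $a_k$. If $a_k=0$ then $t_{k-1}=\beta^{-1}t_k$, and since $a_k=0$ already forces $t_{k-1}\le\beta^{-1}(\beta-1)^{-1}$, avoidance of the switch region is equivalent to $t_{k-1}<\beta^{-1}$, i.e.\ to $t_k=\pi(\sigma^k a)<1$. If $a_k=1$ then $t_{k-1}=\beta^{-1}(1+t_k)$ and, since $a_k=1$ forces $t_{k-1}\ge\beta^{-1}$, avoidance is equivalent to $t_{k-1}>\beta^{-1}(\beta-1)^{-1}$, i.e.\ to $\pi(\overline{\sigma^k a})<1$; here I use the reflection $y\mapsto(\beta-1)^{-1}-y$ and the identity $\pi(\bar c)=(\beta-1)^{-1}-\pi(c)$ to rewrite $(\beta-1)^{-1}-t_k$ as the value of the complemented tail. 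Thus $(a_n)\in\tilde U_\beta$ iff $\pi(\sigma^k a)<1$ whenever $a_k=0$ and $\pi(\overline{\sigma^k a})<1$ whenever $a_k=1$.

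Third, and this is the heart of the matter, I would convert each value inequality $\pi(c)<1$ into the lexicographic inequality $(c_n)\prec(\eta_n)$, where $(\eta_n)$ is the quasi-greedy expansion of $1$. I expect this to be the main obstacle, because for $\beta<2$ the lexicographic order does \emph{not} agree with the numerical order on all of $\{0,1\}^{\mathbb N}$ — for instance $01^\infty\prec10^\infty$ while $\pi(01^\infty)>\pi(10^\infty)$ — so $\pi(c)<1\Leftrightarrow c\prec\eta$ fails pointwise and one cannot naively compare values with lexicographic tails. The equivalence is recovered only on the admissible tails that actually occur. The clean route is Parry's characterisation: $(b_n)$ is the greedy expansion of $\pi(b)$ precisely when $\sigma^k b\prec\eta$ at every position with $b_k=0$. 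Its proof rests on the monotonicity of the greedy algorithm — numerical and lexicographic order \emph{do} agree when restricted to greedy expansions — and on the fact, established in Lemma~\ref{Remark}, that the quasi-greedy expansion of $1$ never ends in $0^\infty$; the latter is exactly why the quasi-greedy, rather than the greedy, expansion of $1$ is the correct comparison sequence, since it excludes the boundary coincidences at value $1$.

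Finally I would assemble the pieces. The condition ``$\pi(\sigma^k a)<1$ whenever $a_k=0$'' is, after the comparison lemma, precisely the greedy-admissibility of $(a_n)$; and applying the same lemma to the complemented sequence under the reflection $y\mapsto(\beta-1)^{-1}-y$, which exchanges greedy and lazy expansions, the condition ``$\pi(\overline{\sigma^k a})<1$ whenever $a_k=1$'' is the lazy-admissibility of $(a_n)$. A coding is the unique coding of its value if and only if it is simultaneously the greedy and the lazy expansion, so the two conditions together are equivalent to $(a_n)\in\tilde U_\beta$, which is the stated criterion. I note that the hypothesis $\beta>\tfrac{1+\sqrt5}{2}$ is not needed for this characterisation itself; it is only used to guarantee that $U_\beta$ is strictly larger than the trivial set $\{0,(\beta-1)^{-1}\}$.
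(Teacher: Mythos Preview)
The paper does not give its own proof of this theorem: immediately before the statement it says ``the proof of this result can be found in \cite{MK} or some references therein'', so there is no in-paper argument to compare against. What you have written is essentially the standard proof (the one in \cite{MK} and its predecessors): identify ``unique'' with ``simultaneously greedy and lazy'' via the switch-region criterion of Proposition~\ref{non unique prop}/equation~\eqref{univoque reformulation111}, and then translate each of these into a lexicographic tail condition against the quasi-greedy expansion $(\eta_n)$ of $1$.

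Your outline is correct, including the subtlety you flag in Step~3: the equivalence $\pi(c)<1\Longleftrightarrow c\prec\eta$ is false on all of $\{0,1\}^{\mathbb N}$ and must be argued only for tails of sequences already satisfying the full family of constraints. The way you package this---``$\pi(\sigma^k a)<1$ whenever $a_k=0$'' is the dynamical definition of greedy, and Parry's theorem identifies greedy with ``$\sigma^k a\prec\eta$ whenever $a_k=0$''---is exactly right for expansions of points in $[0,(\beta-1)^{-1}]$ (note the condition is only at positions with $a_k=0$, not at all $k$; the sequence $1^\infty$ shows the difference). The reflection argument for the lazy half is likewise standard.

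One small correction: your appeal to Lemma~\ref{Remark} for the fact that $(\eta_n)$ never ends in $0^\infty$ is misplaced. That lemma concerns what happens when the quasi-greedy orbit of $1$ hits $\beta^{-1}(\beta-1)^{-1}$; the statement you need is simply the definition of the quasi-greedy expansion (Definition~3.2): if the greedy expansion of $1$ is finite one replaces it by its periodic extension, which by construction does not terminate in $0^\infty$. You should cite the definition rather than Lemma~\ref{Remark}.
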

In this theorem $``<"$ means the lexicographic order.  We shall use this symbol when we compare two sequences. There is no risk of confusion in using the
symbol $``<"$ for numbers and for the lexicographic order.
In \cite[Theorem 1.8]{MK},   de Vries and Komornik  proved  the following theorem.
\setcounter{Theorem}{5}
\begin{Theorem}\label{criterion}
$\widetilde{U}_{\beta}$ is a subshift of finite type  if and only if $\beta\in (1,2)\setminus \overline{\mathcal{U}}$, where $\mathcal{U}$ is the set of $\beta$ for which  the $\beta$-expansion of $1$ is unique. 
\end{Theorem}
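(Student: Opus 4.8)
The plan is to translate the membership criterion of Theorem \ref{Uniquecodings} into a statement about the forbidden words of $\widetilde{U}_{\beta}$, and then to read off finiteness of that forbidden set from the combinatorics of the quasi-greedy expansion $(\eta_n)$ of $1$. By Theorem \ref{Uniquecodings}, a sequence $(a_n)$ lies in $\widetilde{U}_{\beta}$ precisely when $(a_{k+1}a_{k+2}\cdots)<(\eta_n)$ at every index with $a_k=0$ and $\overline{(a_{k+1}a_{k+2}\cdots)}<(\eta_n)$ at every index with $a_k=1$. Thus $\widetilde{U}_{\beta}$ is a two-sided lexicographic subshift completely determined by $(\eta_n)$, and it is invariant under the reflection $\overline{\phantom{a}}$ because the $0$-constraint and the $1$-constraint are interchanged by the global flip $0\leftrightarrow 1$. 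Since a subshift is of finite type exactly when it admits a finite set of forbidden words, the whole problem reduces to deciding when these two families of lexicographic constraints can be captured by finitely many blocks of bounded length.

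First I would record the accessible implication and recast it dynamically. If the $0$-constraint "a $0$ may be followed only by a tail strictly below $(\eta_n)$" can be violated solely through blocks of bounded length, then by reflection the same holds for the $1$-constraint, and $\widetilde{U}_{\beta}$ is an SFT. This is exactly the content already available from \cite[Lemma 2.1]{SKK} and \cite[Theorem 2.4]{SKK}: once the greedy orbit of $1$ enters the open switch region $(\beta^{-1},\beta^{-1}(\beta-1)^{-1})$, the associated coding space is a subshift of finite type. The point is that $G^{k}(1)\in(\beta^{-1},\beta^{-1}(\beta-1)^{-1})$ says precisely that the tail $\sigma^{k}((\eta_n))$ is squeezed strictly between $\overline{(\eta_n)}$ and $(\eta_n)$, i.e. the orbit of $1$ makes a genuine two-branch choice at step $k$. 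Hence the combinatorial task becomes the equivalence: the forbidden set of $\widetilde{U}_{\beta}$ is finite if and only if the greedy orbit of $1$ meets the open switch region, the nontrivial direction being that an orbit of $1$ which never escapes into that region forces arbitrarily long minimal forbidden words.

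Then I would identify the dynamical condition "the greedy orbit of $1$ meets the open switch region" with "$\beta\notin\overline{\mathcal{U}}$". By definition $\beta\in\mathcal{U}$ means $1$ has a unique expansion, which is equivalent to the greedy orbit of $1$ never enjoying a two-branch choice, i.e. never entering the open switch region; so for $\beta\in\mathcal{U}$ the orbit does not escape and $\widetilde{U}_{\beta}$ is not an SFT. The delicate remaining point is the passage to the closure: I would argue that "entering the open switch region" is an open condition on $\beta$ whose complement is exactly $\overline{\mathcal{U}}$. Concretely, the connected components of $(1,2)\setminus\overline{\mathcal{U}}$ are the maximal intervals on which the greedy expansion of $1$ is eventually periodic and the orbit strictly enters the switch region, so $\widetilde{U}_{\beta}$ is an SFT there; the endpoints of these intervals, together with $\mathcal{U}$ itself, exhaust $\overline{\mathcal{U}}$, and at such boundary bases the orbit of $1$ can at best hit the endpoints $\beta^{-1}$ or $\beta^{-1}(\beta-1)^{-1}$, but not the interior.

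The main obstacle I anticipate is precisely this boundary analysis: showing that for every $\beta\in\overline{\mathcal{U}}\setminus\mathcal{U}$ the set $\widetilde{U}_{\beta}$ still fails to be an SFT. Such bases are accumulation points of univoque bases, so although $(\eta_n)$ is eventually periodic one cannot conclude finiteness of forbidden words naively; one must use the fine structure of the univoque set — the stable and unstable intervals of de Vries--Komornik and the self-similar combinatorics around the Komornik--Loreti-type sequences — to produce, for each prescribed window length $N$, an admissible word of length $N$ that becomes forbidden only beyond length $N$. Controlling the strict versus non-strict inequalities of Theorem \ref{Uniquecodings} at these limit sequences, where the tails $\sigma^{k}((\eta_n))$ approach but never strictly enter the switch region, is the crux of the argument and is exactly where the closure operation genuinely enters the statement.
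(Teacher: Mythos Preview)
The paper does not actually prove Theorem~\ref{criterion}: it is quoted from de~Vries--Komornik \cite[Theorem 1.8]{MK}. What the paper proves is the equivalent dynamical criterion (Theorem~\ref{Dynamical}), and that proof \emph{uses} Theorem~\ref{criterion} together with the characterisation of $\overline{\mathcal U}$ in Theorem~\ref{quasi-expansion}, both imported from \cite{MK}. So you are attempting a self-contained argument where the paper simply cites the literature.

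Your plan has a concrete error at the boundary step. You assert that $\widetilde U_\beta$ is an SFT precisely when the (greedy) orbit of $1$ meets the \emph{open} switch region $(\beta^{-1},\beta^{-1}(\beta-1)^{-1})$, and that bases in $\overline{\mathcal U}\setminus\mathcal U$ are exactly those where the orbit may touch the endpoints $\beta^{-1}$ or $\beta^{-1}(\beta-1)^{-1}$ but not the interior. This is false on the right endpoint: Lemma~\ref{QUASI1} (and Lemma~\ref{Remark}) show that if the quasi-greedy orbit of $1$ hits $\beta^{-1}(\beta-1)^{-1}$ for the first time, then $(\eta_i)=(a_1\cdots a_{n_0}\,\overline{a_1\cdots a_{n_0}})^\infty$, so $\sigma^{n_0}(\overline{\eta})=\eta$ and $\widetilde U_\beta$ \emph{is} an SFT; such $\beta$ lie in $(1,2)\setminus\overline{\mathcal U}$, not in the boundary. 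Conversely, Lemma~\ref{QUASI22} shows that hitting $\beta^{-1}$ first keeps $\beta$ in $\overline{\mathcal U}$. Hence the correct dynamical condition is the \emph{half-open} interval $(\beta^{-1},\beta^{-1}(\beta-1)^{-1}]$ for the quasi-greedy orbit, and the two endpoints behave asymmetrically. Your claimed dichotomy ``interior $\Leftrightarrow$ SFT, endpoints $\Leftrightarrow$ not SFT'' breaks down, and with it your identification of $\{\text{orbit misses open switch region}\}$ with $\overline{\mathcal U}$.

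A secondary issue is the repeated conflation of the greedy and quasi-greedy orbits of $1$. When the greedy expansion of $1$ is finite these differ, and it is precisely the quasi-greedy orbit that governs Theorem~\ref{Uniquecodings} and the characterisation of $\overline{\mathcal U}$ in Theorem~\ref{quasi-expansion}; the endpoint analysis above cannot even be stated correctly for the greedy orbit. Your closing paragraph correctly flags the boundary case as the crux, but the proposed treatment of it is not yet right.
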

Equivalentlly we shall prove
\begin{Theorem}\label{Dynamical}
$\widetilde{U}_{\beta}$ is a subshift of finite type if and only if the quasi-greedy orbit of 1 falls into the interval $\left(\dfrac{1}{\beta}, \dfrac{1}{\beta(\beta-1)}\right]$. 
\end{Theorem}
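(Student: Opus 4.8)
The plan is to prove Theorem \ref{Dynamical} directly from the lexicographic description of unique codings (Theorem \ref{Uniquecodings}); since de Vries and Komornik's Theorem \ref{criterion} already says $\widetilde U_\beta$ is a subshift of finite type exactly when $\beta\in(1,2)\setminus\overline{\mathcal U}$, this will simultaneously recast their criterion in the stated dynamical form. The engine of everything is the one-step recursion for the quasi-greedy orbit, $Q^{n+1}(1)=\beta\,Q^n(1)-\eta_{n+1}$, together with the digit rule $\eta_{n+1}=1\iff Q^n(1)>\beta^{-1}$. Writing $\bar 1=(\beta-1)^{-1}-1$, the pivotal computation is that if $Q^m(1)\in\left(\beta^{-1},\frac{1}{\beta(\beta-1)}\right]$ then $\eta_{m+1}=1$ and
\[
Q^{m+1}(1)=\beta\,Q^m(1)-1\le \beta\cdot\tfrac{1}{\beta(\beta-1)}-1=\bar 1,
\]
with equality precisely when $Q^m(1)$ equals the right endpoint $\frac{1}{\beta(\beta-1)}$.

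First I would isolate the purely dynamical equivalence: the quasi-greedy orbit meets $\left(\beta^{-1},\frac{1}{\beta(\beta-1)}\right]$ if and only if $Q^n(1)\le\bar 1$ for some $n\ge 1$. The forward direction is the displayed inequality. For the converse I take the least $n^{\ast}$ with $Q^{n^{\ast}}(1)\le\bar 1$; since $Q^0(1)=1>\bar 1$ we have $n^{\ast}\ge 1$ and $Q^{n^{\ast}-1}(1)>\bar 1$. If $\eta_{n^{\ast}}=0$ then $Q^{n^{\ast}}(1)=\beta\,Q^{n^{\ast}-1}(1)>\beta\bar 1>\bar 1$, a contradiction, so $\eta_{n^{\ast}}=1$, whence $Q^{n^{\ast}-1}(1)>\beta^{-1}$; rearranging $Q^{n^{\ast}}(1)\le\bar 1$ gives $Q^{n^{\ast}-1}(1)\le\frac{1}{\beta(\beta-1)}$, so the previous point already lies in the half-open switch region. (Throughout the range $\frac{1+\sqrt5}{2}<\beta<2$ considered here one has $\bar 1<\beta^{-1}$, so this region does sit strictly to the right of $\bar 1$.)

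It then remains to prove that $\widetilde U_\beta$ is SFT if and only if $Q^n(1)\le\bar 1$ for some $n$, and here I would argue directly from Theorem \ref{Uniquecodings}. The two branches of that criterion forbid, after a digit $0$, any tail with $(a_{k+1}a_{k+2}\cdots)\succeq(\eta_i)$, and after a digit $1$, any tail with $\overline{(a_{k+1}a_{k+2}\cdots)}\succeq(\eta_i)$. When the orbit descends, $Q^{n_0}(1)\le\bar 1$, the tail $\sigma^{n_0}(\eta_i)$ is dominated by $\overline{(\eta_i)}$, so a word that keeps tying with $(\eta_i)$ runs into the two branches simultaneously — one constraint pushing the continuation below $(\eta_i)$ and the complementary one pushing it above — and this clash caps the length of the essential forbidden words, giving finitely many of them and hence a subshift of finite type. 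Conversely, if $Q^n(1)>\bar 1$ for every $n$, the ties with $(\eta_i)$ survive for arbitrarily long, producing an infinite family of forbidden words none of which contains a shorter one, so $\widetilde U_\beta$ is not of finite type.

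The delicate point, and the reason the interval is closed on the right, is the boundary case $Q^{m}(1)=\frac{1}{\beta(\beta-1)}$, where the descent is only to $Q^{m+1}(1)=\bar 1$ with equality rather than a strict drop. Here I would invoke Lemma \ref{Remark}: reaching $\frac{1}{\beta(\beta-1)}$ forces the greedy orbit of $1$ to be finite and the quasi-greedy expansion to take the reflected periodic shape $(a_1\cdots a_{n_0}\,\overline{a_1\cdots a_{n_0}})^{\infty}$ with $a_{n_0}=1$. For such $(\eta_i)$ the block $a_1\cdots a_{n_0}\,\overline{a_1\cdots a_{n_0}}$ cannot be completed to a unique coding at all — the $0$-rule and the $1$-rule impose contradictory requirements on the same tail — so only finitely many forbidden words survive and $\widetilde U_\beta$ is again SFT. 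The main obstacle throughout is precisely this finite-type-versus-sofic threshold: eventual periodicity of $(\eta_i)$ by itself only yields a sofic shift, and the whole force of the theorem is that the genuinely finite-type structure emerges exactly when the quasi-greedy orbit of $1$ enters the (closed-on-the-right) switch region.
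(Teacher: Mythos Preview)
Your opening reduction is clean and correct: the equivalence ``the quasi-greedy orbit meets $\left(\beta^{-1},\frac{1}{\beta(\beta-1)}\right]$ if and only if $Q^n(1)\le\bar 1$ for some $n$'' is a crisp reformulation, and your minimality argument for the converse direction is nicely done. This single observation packages together what the paper handles as separate interior and right-endpoint cases.

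The gap is in your second step, where you propose to derive ``$\widetilde U_\beta$ is SFT $\Leftrightarrow$ $Q^n(1)\le\bar 1$ for some $n$'' directly from Theorem~\ref{Uniquecodings}. What you have written is heuristic, not proof. Three specific issues: (i) the passage from the numerical inequality $Q^{n_0}(1)\le\bar 1$ to the lexicographic inequality $\sigma^{n_0}(\eta_i)\preceq\overline{(\eta_i)}$ is not automatic---$\overline{(\eta_i)}$ is the \emph{smallest} infinite expansion of $\bar 1$ (the quasi-lazy one), not the quasi-greedy expansion $(\alpha_i)$ of $\bar 1$ that monotonicity would give you, and indeed the paper's own Lemma~\ref{HITINTERIOR} only asserts $\overline{(\eta_i)}\le(\alpha_i)$; (ii) the phrase ``this clash caps the length of the essential forbidden words'' does not identify a finite forbidden set or bound the word lengths; (iii) for the converse, ``ties survive arbitrarily long, producing an infinite antichain of forbidden words'' does not actually exhibit such an antichain or explain why no finite forbidden set can cut down the shift correctly.

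The paper avoids all of this by working indirectly: for the forward direction it invokes \cite[Theorem~2.4]{SKK} when the orbit hits the open switch region, and Lemma~\ref{QUASI1} (via Lemma~\ref{Remark}) when it first hits the right endpoint; for the converse it uses Theorem~\ref{criterion} to get $\beta\notin\overline{\mathcal U}$, then Theorem~\ref{quasi-expansion} and Lemma~\ref{QUASI22} to force the orbit into $\left(\beta^{-1},\frac{1}{\beta(\beta-1)}\right]$. Your dynamical equivalence could profitably replace the paper's Lemmas~\ref{HITINTERIOR} and~\ref{QUASI22}, but you still need to close the loop through Theorems~\ref{criterion} and~\ref{quasi-expansion} rather than through a bare forbidden-word analysis---unless you are prepared to essentially reprove those results from scratch.
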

We partition the proof of Theorem \ref{Dynamical} into several lemmas.
\setcounter{Lemma}{7}
\begin{Lemma}\label{QUASI1}
If the quasi-greedy orbit of $1$ hits $\beta^{-1}(\beta-1)^{-1}$ for the first time, then $\tilde{U}_{\beta}$ is a SFT.
\end{Lemma}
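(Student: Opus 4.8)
The plan is to turn the lexicographic test of Theorem~\ref{Uniquecodings} into a genuine finite-memory condition by exhibiting an explicit finite list of forbidden words. First I would record the shape of the data supplied by the hypothesis: by Lemma~\ref{Remark} the quasi-greedy expansion of $1$ is purely periodic, $(\eta_i)=(a_1\cdots a_{n_0}\overline{a_1\cdots a_{n_0}})^{\infty}$ with $a_{n_0}=1$, of period $N:=2n_0$. Writing $u=a_1\cdots a_{n_0}$ and $w:=\eta_1\cdots\eta_N=u\,\overline{u}$, the two facts I will lean on are the reflection identity $\sigma^{n_0}((\eta_i))=\overline{(\eta_i)}$ (equivalently $\sigma^{n_0}(\overline{(\eta_i)})=(\eta_i)$, and $\overline{w}=\overline{u}\,u$ is the length-$N$ prefix of $\overline{(\eta_i)}$), together with the boundary values $w_{n_0}=a_{n_0}=1$ and $w_{N}=\overline{a_{n_0}}=0$.

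The crux is a structural lemma: the word $w$ (and, symmetrically, $\overline{w}$) can never occur as a factor of any sequence in $\widetilde{U}_{\beta}$. Suppose $w$ occupies the positions $k+1,\dots,k+N$ of some $(c_n)\in\widetilde{U}_{\beta}$. Applying Theorem~\ref{Uniquecodings} at the interior index $k+n_0$, where $c_{k+n_0}=w_{n_0}=1$, the tail $\sigma^{k+n_0}((c_n))$ begins with the block $\overline{u}$, which is exactly the first $n_0$ symbols of $\overline{(\eta_i)}$; since the inequality is strict and $\sigma^{n_0}(\overline{(\eta_i)})=(\eta_i)$, this forces $\sigma^{k+N}((c_n))>(\eta_i)$. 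Applying the same theorem at the terminal index $k+N$, where $c_{k+N}=w_{N}=0$, forces $\sigma^{k+N}((c_n))<(\eta_i)$. The contradiction proves the claim. I expect this to be the main obstacle, because it is precisely the mechanism that prevents the lexicographic comparison of a tail with the periodic sequence $(\eta_i)$ from requiring unbounded look-ahead: the only tails that would need arbitrarily long agreement with $(\eta_i)$ are those beginning with $w$, and such beginnings are now excluded outright.

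I would then define the finite set of forbidden words
\[
\mathcal{F}=\{0v:\ |v|=N,\ v\ge w\}\ \cup\ \{1v:\ |v|=N,\ v\le \overline{w}\},
\]
where $\le,\ge$ denote the lexicographic order on length-$N$ words, and show that $\widetilde{U}_{\beta}$ equals the subshift $X_{\mathcal{F}}$ determined by avoiding $\mathcal{F}$. For $\widetilde{U}_{\beta}\subseteq X_{\mathcal{F}}$: if a factor $0v$ with $v>w$ appeared, the condition of Theorem~\ref{Uniquecodings} at that $0$ would already fail within the first period, while the case $v=w$ is ruled out by the structural lemma; the words $1v$ with $v\le\overline{w}$ are handled symmetrically. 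For $X_{\mathcal{F}}\subseteq\widetilde{U}_{\beta}$: if $(c_n)$ avoids $\mathcal{F}$, then at every index $k$ with $c_k=0$ the block $c_{k+1}\cdots c_{k+N}$ is strictly less than $w$, so the comparison is decided strictly inside the first period and $\sigma^{k}((c_n))<(\eta_i)$; likewise $c_k=1$ gives $c_{k+1}\cdots c_{k+N}>\overline{w}$, hence $\overline{\sigma^{k}((c_n))}<(\eta_i)$. Thus both inequalities of Theorem~\ref{Uniquecodings} hold for all $k$.

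Since $\mathcal{F}$ consists of finitely many words, all of length $N+1$, the equality $\widetilde{U}_{\beta}=X_{\mathcal{F}}$ exhibits $\widetilde{U}_{\beta}$ as a subshift of finite type, which is the assertion of Lemma~\ref{QUASI1}. Beyond the structural lemma the only care needed is bookkeeping the strictness of the lexicographic inequalities, i.e.\ ensuring that the boundary case $v=w$ (and $v=\overline{w}$) is genuinely excluded rather than merely $v>w$; this is exactly what the structural lemma delivers, so no separate argument is required.
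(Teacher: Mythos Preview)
Your argument is correct, but it takes a genuinely different route from the paper's own proof. The paper does not construct forbidden words at all; instead it invokes Theorem~\ref{criterion} (the de~Vries--Komornik criterion: $\tilde U_\beta$ is an SFT iff $\beta\notin\overline{\mathcal U}$) together with Theorem~\ref{quasi-expansion} (the lexicographic characterization of $\overline{\mathcal U}$), so that it suffices to exhibit a single index $k_0$ with $\overline{\eta_{k_0}\eta_{k_0+1}\cdots}=(\eta_i)$. Since Lemma~\ref{Remark} gives $(\eta_i)=(u\,\overline u)^\infty$, one has $\sigma^{n_0}\overline{(\eta_i)}=(\eta_i)$ on the nose, and the proof is finished in one line.

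What you do instead is work directly from the unique-expansion criterion (Theorem~\ref{Uniquecodings}) and produce an explicit finite forbidden set $\mathcal F$ of words of length $N+1=2n_0+1$, together with the structural lemma that the period word $w=u\,\overline u$ (and its reflection) cannot occur as a factor of any unique expansion. This is more work but is self-contained: it does not lean on the de~Vries--Komornik classification of $\overline{\mathcal U}$, and it actually identifies the SFT presentation of $\tilde U_\beta$. The paper's proof is shorter precisely because it outsources the hard step to \cite{MK}; your proof trades brevity for constructiveness and avoids that dependency. Both arguments ultimately hinge on the same identity $\sigma^{n_0}\overline{(\eta_i)}=(\eta_i)$ --- in the paper it is used to violate the $\overline{\mathcal U}$-condition, while in your proof it is the engine of the structural lemma.
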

Before we prove this lemma, we review one important result, see \cite{MK}. 
\setcounter{Theorem}{8}
\begin{Theorem}\label{quasi-expansion}
Let $(\eta_n)$ be the quasi-greedy expansion of $1$, then $\beta\in \overline{\mathcal{U}}$ if and only if 
$$\overline{\eta_{k}\eta_{k+1}\eta_{k+2}\cdots}<\eta_{1}\eta_{2}\eta_{3}\cdots$$ for any $k\geq 1$. 
\end{Theorem}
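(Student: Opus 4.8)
The plan is to convert the statement into a purely combinatorial fact about the quasi-greedy expansion $(\eta_i)$ and then obtain $\overline{\mathcal{U}}$ from $\mathcal{U}$ by a monotone approximation. I would first record the standing facts: since $\frac{1+\sqrt5}{2}<\beta<2$ we have $\eta_1=1$; the quasi-greedy expansion always satisfies $\sigma^k(\eta_i)\le(\eta_i)$ for every $k$; and the map $\beta\mapsto(\eta_i(\beta))$ is strictly increasing for the lexicographic order, so $\beta_n\to\beta$ forces coordinatewise convergence $(\eta_i(\beta_n))\to(\eta_i(\beta))$ from the appropriate side. I would also note that the inequality $\overline{\eta_k\eta_{k+1}\cdots}<\eta_1\eta_2\cdots$ is automatic whenever $\eta_k=1$ (its left side then begins with $0<1=\eta_1$), so the condition carries content only at positions with $\eta_k=0$. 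Applying Theorem~\ref{Uniquecodings} to the coding $(\eta_i)$ of $1$ itself, together with the admissibility $\sigma^k(\eta_i)\le(\eta_i)$, I would establish the combinatorial description: $\beta\in\mathcal{U}$ if and only if the greedy orbit of $1$ is infinite (equivalently $(\eta_i)$ is aperiodic, equivalently $\sigma^k(\eta_i)<(\eta_i)$ strictly for all $k$) \emph{and} the displayed reflection inequality holds for all $k$. The nontrivial point here is a short propagation argument showing that the two one-sided conditions of Theorem~\ref{Uniquecodings} collapse to the single reflection condition of the statement.

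For the implication ``lexicographic condition $\Rightarrow\beta\in\overline{\mathcal{U}}$'' I would distinguish two cases. If the greedy orbit of $1$ is infinite, then $(\eta_i)$ is aperiodic, the strict shift inequalities hold automatically, and combined with the hypothesis this is exactly the criterion for $(\eta_i)$ to be the unique coding of $1$; hence $\beta\in\mathcal{U}\subset\overline{\mathcal{U}}$. If the greedy orbit is finite, then $(\eta_i)$ is periodic, and the \emph{strictness} of the hypothesis rules out the self-reflective period $(a_1\cdots a_{n_0}\overline{a_1\cdots a_{n_0}})^{\infty}$ singled out in Lemma~\ref{Remark}. In this remaining periodic subcase $\beta\notin\mathcal{U}$, so I would manufacture univoque bases $\beta_n\uparrow\beta$ by replacing the tail of long initial blocks of $(\eta_i)$ with an aperiodic, reflection-admissible continuation of Thue--Morse type that strictly satisfies both inequalities of Theorem~\ref{Uniquecodings}; monotonicity of $\beta\mapsto(\eta_i(\beta))$ then yields $\beta_n\to\beta$, so $\beta\in\overline{\mathcal{U}}$.

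For the converse I would take $\beta_n\in\mathcal{U}$ with $\beta_n\to\beta$. By the combinatorial description each $(\eta_i(\beta_n))$ satisfies the reflection inequality; letting $n\to\infty$ and using coordinatewise convergence gives the non-strict version $\overline{\eta_k\eta_{k+1}\cdots}\le\eta_1\eta_2\cdots$ for all $k$. It remains to exclude equality. Equality for some $k$ forces $(\eta_i)$ to be exactly the self-reflective periodic sequence of Lemma~\ref{Remark}, that is, the quasi-greedy orbit of $1$ hits the right endpoint $\beta^{-1}(\beta-1)^{-1}$ of the switch region; I would then argue that a one-sided neighbourhood of such a $\beta$ contains no univoque base (the quasi-greedy expansions there all begin with the block $a_1\cdots a_{n_0}$ and fail the reflection test), contradicting $\beta\in\overline{\mathcal{U}}$. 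This upgrades $\le$ to $<$ and closes the equivalence.

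The main obstacle is the strictness bookkeeping at the boundary: one must cleanly separate the self-reflective periodic bases of Lemma~\ref{Remark}, which lie just outside $\overline{\mathcal{U}}$, from the ordinary periodic bases that are genuine accumulation points of $\mathcal{U}$, and one must make the approximating construction in the periodic subcase explicit enough that each approximant provably meets the full unique-coding criterion of Theorem~\ref{Uniquecodings}. The preliminary collapse of the two-sided criterion to the single reflection condition is the other point requiring care; once both are in place, the remaining steps are just monotonicity and passage to the limit.
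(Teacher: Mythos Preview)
The paper does not prove this theorem. It is introduced with the sentence ``Before we prove this lemma, we review one important result, see \cite{MK}'' and is simply quoted from de Vries and Komornik; no argument is supplied. There is therefore no proof in the paper against which to compare your proposal.

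For what it is worth, your outline follows the standard route taken in \cite{MK} (and earlier in the Komornik--Loreti literature): characterise $\mathcal{U}$ combinatorially via Theorem~\ref{Uniquecodings}, use monotonicity of $\beta\mapsto(\eta_i(\beta))$ to pass to the closure, and handle the boundary case where equality $\overline{\sigma^{k-1}(\eta_i)}=(\eta_i)$ would force the self-reflective period of Lemma~\ref{Remark}. You have correctly identified the two genuinely delicate points: the explicit construction of univoque approximants $\beta_n\uparrow\beta$ in the periodic subcase (in the literature this is done either by a Thue--Morse tail as you suggest or, more economically, by showing that the smallest univoque base above each de Vries--Komornik interval accumulates to its right endpoint), and the exclusion of the self-reflective bases from $\overline{\mathcal{U}}$ on the converse side. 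One caution on the latter: your proposed argument that ``a one-sided neighbourhood of such a $\beta$ contains no univoque base'' needs the full two-sided version, since a priori the approximating $\beta_n\in\mathcal{U}$ could approach from above; in \cite{MK} this is handled by showing that each self-reflective base is the \emph{left} endpoint of a maximal interval in $(1,2)\setminus\overline{\mathcal{U}}$, which gives the required gap on the correct side.
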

By Theorems \ref{criterion} and \ref{quasi-expansion}, to prove Lemma \ref{QUASI1},
  it is enough to prove that there exists $k_0$ such that $\overline{\eta_{k_0}\eta_{k_0+1}\eta_{k_0+2}\cdots}=(\eta_n)$.
  \begin{proof}[Proof of Lemma \ref{QUASI1}]
By Lemma \ref{Remark}, we may assume the quasi-greedy expansion of $1$ is $$(\eta_i)=(a_1a_2\cdots a_{n_0} \overline{ a_1a_2\cdots a_{n_0})} ^{\infty}$$ for some $(a_1a_2\cdots a_{n_0})$. 
From this it follows that  $\sigma^{{n_0}} \overline{(\eta_i)}=(\eta_i)$, and  Lemma \ref{QUASI1} is proved. 
\end{proof}
\setcounter{Lemma}{9}
\begin{Lemma}\label{HITINTERIOR}
If  there exists $k_0\geq 1$ such that 
$$\overline{\eta_{k_0+1}\eta_{k_0+2}\eta_{k_0+3}\cdots}>\eta_{1}\eta_{2}\eta_{3}\cdots,$$ then the quasi-greedy orbit of $1$ hits the interior of the switch region. \end{Lemma}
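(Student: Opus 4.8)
The plan is to convert the purely combinatorial hypothesis into the geometric statement by producing an explicit iterate that lies strictly inside $\left(\dfrac{1}{\beta},\dfrac{1}{\beta(\beta-1)}\right)$. The essential difficulty is that a lexicographic inequality between $\beta$-expansions does not in general reflect the numerical order of the corresponding reals, so the decisive task is to extract a genuine \emph{value} estimate from the hypothesis. First I record what the hypothesis forces on the digits. Throughout $\eta_1=\eta_2=1$ (this uses $\beta>\tfrac{1+\sqrt5}{2}$), and since $(\eta_n)$ is the quasi-greedy expansion of $1$ it is shift-maximal, whence $Q^{p}(1)=\sum_{n\ge 1}\eta_{p+n}\beta^{-n}\le 1$ for every $p\ge 0$. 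Let $p\ge 1$ be the first index at which $\overline{\eta_{k_0+1}\eta_{k_0+2}\cdots}$ and $\eta_1\eta_2\cdots$ disagree. Because the inequality is strict and the alphabet is $\{0,1\}$, the larger digit sits on the left, so $\overline{\eta_{k_0+p}}=1$, $\eta_p=0$, and $\overline{\eta_{k_0+q}}=\eta_q$ for $1\le q\le p-1$; in particular $\eta_{k_0+1}=\overline{\eta_1}=0$, so $Q^{k_0}(1)\le\beta^{-1}$.

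Second, I convert this into a value bound through the reflection $x\mapsto(\beta-1)^{-1}-x$, which acts on expansions by digitwise complementation. Set $R:=\big(\overline{\eta_{k_0+1}\eta_{k_0+2}\cdots}\big)_\beta=(\beta-1)^{-1}-Q^{k_0}(1)$. Using the first-difference description, the terms $\overline{\eta_{k_0+n}}-\eta_n$ vanish for $n<p$ and equal $+1$ at $n=p$, so
\[
R-1=\beta^{-p}\big(1-Q^{p}(1)\big)+\sum_{n>p}\overline{\eta_{k_0+n}}\,\beta^{-n}\ \ge\ 0 .
\]
Both summands are nonnegative, and they vanish simultaneously only if $Q^{p}(1)=1$ and $\overline{\eta_{k_0+n}}=0$ for all $n>p$; the latter would force $(\eta_n)$ to end in $1^\infty$, which is incompatible with its representing $1$ under shift-maximality. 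Hence $R>1$, i.e. $Q^{k_0}(1)<(\beta-1)^{-1}-1=\bar 1$. This is the step that carries the whole argument: the particular shape of a complemented shift that agrees with $\eta$ up to its first discrepancy is exactly what lets the lexicographic inequality be upgraded to a numerical one, and I expect it to be the main obstacle.

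Finally I trace the orbit backwards to exhibit the interior point. Let $j\le k_0$ be the largest index with $\eta_j=1$ (it exists since $\eta_1=1$). For $j<i\le k_0$ the digit $\eta_i$ is $0$, so $Q^{i}(1)=\beta\,Q^{i-1}(1)$, and therefore $Q^{j}(1)=\beta^{-(k_0-j)}Q^{k_0}(1)\le Q^{k_0}(1)<\bar 1$. On the other hand $\eta_j=1$ gives $Q^{j-1}(1)>\beta^{-1}$ together with $Q^{j}(1)=\beta\,Q^{j-1}(1)-1<\bar 1=(\beta-1)^{-1}-1$, that is $Q^{j-1}(1)<\beta^{-1}(\beta-1)^{-1}$. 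Thus $Q^{j-1}(1)\in\big(\beta^{-1},\beta^{-1}(\beta-1)^{-1}\big)$, the interior of the switch region. Here $j\ge 2$, since $j=1$ would yield $Q^{1}(1)=\beta-1<\bar 1$ and hence $\beta<\tfrac{1+\sqrt5}{2}$, contrary to hypothesis; so $Q^{j-1}(1)$ is a genuine iterate and not the starting point $1$. Beyond the strictness in the second step, the only bookkeeping I anticipate is the quasi-greedy convention at the endpoints $\beta^{-1}$ and $\beta^{-1}(\beta-1)^{-1}$ (assigning digits $0$ and $1$ respectively), which governs the equivalences $Q^{i}(1)>\beta^{-1}\Leftrightarrow\eta_{i+1}=1$ used above; everything else is a direct computation.
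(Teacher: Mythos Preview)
Your argument is correct. Both you and the paper arrive at the same pivotal inequality $Q^{k_0}(1)<\bar 1$, but the routes differ. The paper observes that $\overline{\eta}$ is itself an infinite expansion of $\bar 1$, hence is lexicographically bounded above by the quasi-greedy expansion $(\alpha_i)$ of $\bar 1$; the hypothesis then gives $\sigma^{k_0}(\eta)<\overline{\eta}\le(\alpha_i)$, and the monotonicity of the quasi-greedy coding yields $Q^{k_0}(1)<\bar 1$ in one stroke. You instead bypass the structural theory of quasi-greedy expansions and obtain $R>1$ by the explicit first-difference decomposition
\[
R-1=\beta^{-p}\bigl(1-Q^{p}(1)\bigr)+\sum_{n>p}\overline{\eta_{k_0+n}}\,\beta^{-n},
\]
together with the observation that $(\eta_n)$ cannot end in $1^{\infty}$. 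This is more elementary and self-contained, at the price of a longer computation.

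Where your write-up genuinely adds to the paper is the final step. The paper simply asserts ``hence the quasi-greedy orbit of $1$ must fall into $(\beta^{-1},\beta^{-1}(\beta-1)^{-1})$'' after obtaining $Q^{k_0}(1)<\bar 1$; you make this explicit by backtracking to the last index $j\le k_0$ with $\eta_j=1$, showing $Q^{j}(1)\le Q^{k_0}(1)<\bar 1$ and hence $Q^{j-1}(1)\in(\beta^{-1},\beta^{-1}(\beta-1)^{-1})$, and then ruling out $j=1$ via $\beta>\tfrac{1+\sqrt5}{2}$. The equivalence $\eta_{i+1}=1\Leftrightarrow Q^{i}(1)>\beta^{-1}$ that you use follows (as you indicate) from the fact that $(\eta_n)$ does not terminate in $0^{\infty}$ and that $Q^{i}(1)\le 1$ for all $i$; no further endpoint convention is needed. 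Your remark that $\eta_2=1$ is correct but not actually used in the argument.
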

\begin{proof}
Suppose $$\eta_{k_0+1}\eta_{k_0+2}\eta_{k_0+3}\cdots< \overline{\eta_{1}\eta_{2}\eta_{3}\cdots}$$ for some $k_0\geq 1$,
 and let  $(\alpha_i)$ be  the quasi-greedy expansion of $\bar{1}$.
 Since the quasi-greedy expansion is the largest infinite sequence in the sense of  lexicographical ordering, we have 
 $$\overline{\eta_{1}\eta_{2}\eta_{3}\cdots}\leq (\alpha_i).$$   Thus, $$\eta_{k_0+1}\eta_{k_0+2}\eta_{k_0+3}\cdots<(\alpha_i). $$
By  monotonicity of the quasi-greedy expansion \cite{MK}, it follows  that $$(\eta_{k_0+1}\eta_{k_0+2}\eta_{k_0+3}\cdots)_{\beta}:=\sum_{j=1}^{\infty}\eta_{j+k_0}\beta^{-j}<\bar{1}.$$ Hence, the quasi-greedy orbit of $1$ must fall into $(\beta^{-1},\beta^{-1}(\beta-1)^{-1})$.
\end{proof}
\begin{Lemma}\label{QUASI22}
If the quasi-greedy orbit of $1$ hits $\beta^{-1}$ for the first time, then we have that $$\overline{\eta_{k+1}\eta_{k+2}\eta_{k+3}\cdots}<\eta_{1}\eta_{2}\eta_{3}\cdots,$$ for any $k\geq 1$. Consequently, $\tilde{U}_{\beta}$ is not a subshift of finite type. 
\end{Lemma}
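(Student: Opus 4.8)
The plan is to pin down the orbit's fate after it reaches the left endpoint, then obtain the inequality in two stages: a cheap non-strict inequality, followed by a strictness statement that carries the real content.

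First I would determine what happens once the orbit reaches $\beta^{-1}$. Writing $Q^{N}(1)=\beta^{-1}$ for the first hit and using $Q^{k}(1)=\sum_{j\ge 1}\eta_{k+j}\beta^{-j}$, the relation $\sum_{j\ge 1}\eta_{N+j}\beta^{-j}=\beta^{-1}$ forces $\eta_{N+1}=0$: if $\eta_{N+1}=1$ the remaining digits would have to vanish, i.e. the expansion would terminate, which quasi-greedy expansions never do. Hence $Q^{N+1}(1)=\beta Q^{N}(1)-\eta_{N+1}=1=Q^{0}(1)$, so the quasi-greedy orbit is purely periodic with $(\eta_i)=(\eta_1\cdots\eta_N 0)^{\infty}$ and $\eta_1=1$. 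By the first-hitting hypothesis the (now finite) orbit is contained in $[0,\beta^{-1}]\cup(\beta^{-1}(\beta-1)^{-1},(\beta-1)^{-1}]$; in particular it misses the interior of the switch region $(\beta^{-1},\beta^{-1}(\beta-1)^{-1})$ and also the right endpoint $\beta^{-1}(\beta-1)^{-1}$.

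Because the orbit avoids the interior of the switch region, the contrapositive of Lemma \ref{HITINTERIOR} immediately yields the non-strict inequality $\overline{\eta_{k+1}\eta_{k+2}\cdots}\le\eta_1\eta_2\cdots$ for every $k\ge 1$, so the only thing left is to rule out equality, and this is the crux. Suppose $\overline{\eta_{k+1}\eta_{k+2}\cdots}=\eta_1\eta_2\cdots$ for some $k\ge 1$, equivalently $\sigma^{k}(\eta_i)=\overline{(\eta_i)}$. Passing to values, and using $(\overline{\eta_i})_{\beta}=(\beta-1)^{-1}-1=:\bar 1$, this says $Q^{k}(1)=\bar 1$. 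Now I would exploit the reflection $R(x)=(\beta-1)^{-1}-x$, which satisfies $R T_0=T_1 R$ and $R T_1=T_0 R$; thus $R$ interchanges the two branches and preserves both the interior of the switch region and its complement (it swaps the endpoints $\beta^{-1}\leftrightarrow\beta^{-1}(\beta-1)^{-1}$). Since the orbit, and hence its reflection, avoids the interior of the switch region, the digit assigned at a reflected point is forced to be the complement of the digit at the original point; an induction on $j$ then gives $Q^{k+j}(1)=R(Q^{j}(1))$ for all $j\ge 0$, with base case $Q^{k}(1)=\bar 1=R(1)=R(Q^{0}(1))$. Taking $j=N$ yields $Q^{k+N}(1)=R(\beta^{-1})=\beta^{-1}(\beta-1)^{-1}$, i.e. the right endpoint of the switch region would lie in the periodic orbit, contradicting the conclusion of the previous paragraph. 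Hence equality never occurs and the inequality is strict for every $k\ge 1$.

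Finally, the strict inequality $\overline{\eta_{k+1}\eta_{k+2}\cdots}<\eta_1\eta_2\cdots$ for all $k\ge 1$, together with the trivial case $\overline{\eta_1\eta_2\cdots}<\eta_1\eta_2\cdots$ (immediate from $\eta_1=1$), is exactly the criterion of Theorem \ref{quasi-expansion}, whence $\beta\in\overline{\mathcal{U}}$; Theorem \ref{criterion} then gives that $\tilde U_{\beta}$ is not a subshift of finite type. I expect the strictness step to be the main obstacle: the non-strict inequality is essentially free from Lemma \ref{HITINTERIOR}, but excluding equality is precisely what distinguishes the present case (hitting $\beta^{-1}$) from the case of Lemma \ref{QUASI1} (hitting $\beta^{-1}(\beta-1)^{-1}$), where equality genuinely holds and $\tilde U_{\beta}$ is a subshift of finite type. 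The reflection-plus-first-hitting argument is what makes this dichotomy rigorous.
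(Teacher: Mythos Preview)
Your proof is correct and follows essentially the same route as the paper's: both obtain the non-strict inequality from the contrapositive of Lemma~\ref{HITINTERIOR}, then rule out equality by observing that $\sigma^{k}(\eta_i)=\overline{(\eta_i)}$ would place $\bar 1$ on the quasi-greedy orbit, which (via the reflection symmetry and periodicity) forces the orbit to hit $\beta^{-1}(\beta-1)^{-1}$, contradicting the first-hit hypothesis. Your version is more explicit in establishing the periodicity $Q^{N+1}(1)=1$ and in spelling out the intertwining $RT_0=T_1R$, but the underlying argument is the same as the paper's (which states the contrapositive direction ``never hits $\beta^{-1}(\beta-1)^{-1}$, and subsequently never hits $\bar 1$'').
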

\begin{proof}
Suppose the quasi-greedy orbit of $1$ hits $\beta^{-1}$ for the first time, but 
$$\eta_{k_0+1}\eta_{k_0+2}\eta_{k_0+3}\cdots\leq \overline{\eta_{1}\eta_{2}\eta_{3}\cdots}$$ for some $k_0\geq 1$.
By Lemma \ref{HITINTERIOR}, 
we have $$\eta_{k_0+1}\eta_{k_0+2}\eta_{k_0+3}\cdots\geq \overline{\eta_{1}\eta_{2}\eta_{3}\cdots}$$ implying
$$\eta_{k_0+1}\eta_{k_0+2}\eta_{k_0+3}\cdots= \overline{\eta_{1}\eta_{2}\eta_{3}\cdots}.$$
Therefore
there exists $k_0\geq 1$ such that $\sigma^{k_0}(\overline{\eta_i})=(\eta_i)$.
By the assumption of the lemma, we know that  the  greedy orbit of $1$ is finite. Hence, the quasi-greedy expansion of $1$ has the form
$(\eta_i)=(a_1a_2\cdots a_n^{-})^{\infty}$ for some $(a_1a_2\cdots a_n^{-})$, where $a_n=1$.
Since $(\eta_i)=(a_1a_2\cdots a_n^{-})^{\infty}$, it follows that the quasi-greedy orbit of $1$ never hits the point $(\beta)^{-1}(\beta-1)^{-1}$, and subsequently never hits the point $\bar{1}=(2-\beta)(\beta-1)^{-1}$. 
However, we have $\sigma^{k_0}(\eta_i )= (\overline{(\eta_i)})$, which yields that the quasi-greedy orbit of $1$ hits $\bar{1}$, leading a contradiction. 
\end{proof}
\begin{proof}[Proof of Theorem \ref{Dynamical}]
If the quasi-greedy orbit of 1 hits the interior of switch region or  $(\beta)^{-1}(\beta-1)^{-1}$ for the first time, then $\tilde{U}_{\beta}$ is a subshift of finite type in terms of  \cite[Theorem 2.4]{SKK} and Lemma \ref{QUASI1}.
Conversely, if $\tilde{U}_{\beta}$ is a subshift of finite type, then we have $\beta\notin \overline{\mathcal{U}}$, which implies that the quasi-greedy expansion of $1$ hits the switch region eventually. By Lemma \ref{QUASI22}, it follows that 
the quasi-greedy orbit of 1 hits the interior of switch region or  $(\beta)^{-1}(\beta-1)^{-1}$ for the first time. 
\end{proof}

\section{Univoque set for $\beta$-expansions}
    Let $1<\beta<2$. In this section, we give another application of our result to $\beta$-expansions. For simplicity, we still use the definitions and notation defined in the last section. 
The main goal  of this section  is to calculate $\dim_{H}(U_{\beta})$ when $\tilde{U}_{\beta}$ is not a subshift of finite type, in general this is a hard task. However, in this  scenario the technique of periodic  orbits 
allows us to find $\dim_{H}(U_{\beta})$ in many cases. 
By Theorem 2.4 in \cite{SKK} we have that $\tilde{U}_{\beta}$ is a subshift of finite type if the orbit of $1$ falls into $(\beta^{-1}, \beta^{-1}(\beta-1)^{-1})$ and that  $\dim_{H}(U_{\beta})$ can be calculated in terms of Mauldin and Williams' work \cite{MW}. Hence we suppose in this section that the greedy orbit of $1$ is eventually periodic and that it never hits $(\beta^{-1}, \beta^{-1}(\beta-1)^{-1})$.  

One can express the univoque set $U_{\beta}$ as
$$U_{\beta}=\left\{x\in[0, (\beta-1)^{-1}]: G^{n}(x)\notin [\beta^{-1}, \beta^{-1}(\beta-1)^{-1}]\,\, \mbox{for any }\,\,n\geq 0\right \}.$$

Now we   partition $[0,(\beta-1)^{-1}]$ via the   eventually periodic greedy orbits of $1$ and $(2-\beta)(\beta-1)^{-1}$.
Let $\bar{1}=(2-\beta)(\beta-1)^{-1}$ be the reflection of 1.   Since the greedy orbit of $1$ takes only finitely many values, by symmetry it follows that the greedy orbit of $\bar{1}$ also takes only finitely many values, see the first statement in the proof of Lemma \ref{Remark}. Without loss of generality, we may assume that the values of the greedy orbits of $1$ and $\bar{1}$  are $c_1<c_2<\cdots<c_p$. If $\beta^{-1}$, $\beta^{-1}(\beta-1)^{-1}$, $0$,  $1$  and $(\beta-1)^{-1}$ are not in $\{c_1,c_2,\cdots,c_p\}$, then we add these  points to this set. For simplicity, we  still assume that our partition points of $[0,(\beta-1)^{-1}]$ are $\{c_1,c_2,\cdots,c_p\}$. 
Now we have that 
$$[0,(\beta-1)^{-1}]=\bigcup_{i=1}^{p-1}[c_i, c_{i+1}].$$
It is easy to see that the image of each interval of the partition is the union of some elements of the Markov partition, see the following example.
\begin{Example}\label{Multi}
Let $\beta$ be the largest positive root of $x^{3}=x^{2}+x+1$. 
The  greedy orbit of $1$ takes three vaules, i.e. $G(1)=\beta-1, G^2(1)=\beta^{-1}, G^n(1)=0, n\geq  3$. Similarly,  $G(\bar{1})=(\beta-1)^{-1}\beta^{-2}$,  $G^{2}(\bar{1})=(\beta-1)^{-1}\beta^{-1}$, $G^{3}(\bar{1})=\bar{1}$.

Let $A=\left[0, \dfrac{2-\beta}{\beta-1}\right]$, $B=\left[ \dfrac{2-\beta}{\beta-1},\dfrac{1}{(\beta-1)\beta^2} \right]$, 
 $C=\left[ \dfrac{1}{(\beta-1)\beta^2},\dfrac{1}{\beta} \right]$,  $D=\left[\beta^{-1}, \dfrac{1}{(\beta-1)\beta}\right]$, $E=\left[\dfrac{1}{(\beta-1)\beta}, \beta-1\right]$,
$F=\left[\beta-1,1\right]$, $G=[1, (\beta-1)^{-1}]$.
Hence we give a Markov partition of $[0,1)$ and they have following relations:

$T_0(A)=A\cup B$, $T_0(B)=C\cup D$, $T_0(C)=E\cup F$, $T_1(D)=A$, $T_1(E)=B\cup C$, $T_1(F)=D\cup E$, $T_1(G)=F\cup G.$
The corresponding adjacency matrix is given by  
$$
 S=\begin{pmatrix}
  1 & 1 & 0 & 0& 0 & 0 & 0\\
   0 & 0& 1 & 1& 0 & 0 & 0\\ 
 0 & 0 & 0 & 0& 1 & 1& 0\\ 
 1 & 0 & 0 & 0& 0 & 0& 0\\
 0 & 1 & 1 & 0& 0 & 0& 0\\
 0 & 0 & 0 & 1& 1 & 0& 0\\
  0 & 0 & 0 & 0& 0 & 1& 1\\
 \end{pmatrix}.
$$
\end{Example}
Here we should emphasize that the algorithm in the example is not the greedy map for all the points in $[0, (\beta-1)^{-1}]$. For example, for the point $\beta^{-1}$, we can implement $T_0$ (as $\beta^{-1}$ is the right endpoint of $[\beta^{-2}(\beta-1)^{-1} ,\beta^{-1}]$) or $T_1$ (as $\beta^{-1}$ is the left endpoint of $[ \beta^{-1}, \beta^{-1}(\beta-1)^{-1}]$). But for the sets of partition, we do use the greedy algorithm, i.e. for any $[c_i, c_{i+1}]$, we find the largest $k$ such that $T_k$ works on $[c_i, c_{i+1}]$. 

Recall the Markov partition of  $[0, (\beta-1)^{-1}]= \bigcup_{i=1}^{p-1}[c_i, c_{i+1}]$, we identify each intervals $[c_i, c_{i+1}]$ with a block $A_i$. In this section, we use the set of blocks $\{A_i\}_{i=1}^{p-1}$  representing the intervals $\{[c_i, c_{i+1}]\}_{i=1}^{p-1}$. 
Let $S$ be the corresponding adjacency matrix of the Markov partition, and $\Sigma$ the associated subshift of finite type, i.e.
$$ \Sigma=\{({i_n})\in\{1,2,3\cdots,p-1\}^{\mathbb{N}}:S_{{i_n}, {i_{n+1}}}=1\}.$$
Note  that we are assuming that  the greedy orbit of $1$ never hits $(\beta^{-1},\beta^{-1}(\beta-1)^{-1})$. 
 Hence, by the definition of  Markov partition, the switch region is one of the sets of this partition.  We denote its associated block by $A_i$. 

Let $S$ be the underlying  adjacency matrix, and  the associated block of the switch region be $A_i$. We remove the $i$-th row and $i$-th column of $S$, and denote this new matrix by $S^{'}$.  
For  Example \ref{Multi} we mentioned above, 
$$
 S^{'}=\begin{pmatrix}
  1 & 1 & 0 & 0 & 0 & 0\\
   0 & 0& 1 & 0 & 0 & 0\\ 
 0 & 0 & 0 & 1 & 1 & 0\\ 
 0 & 1 & 1 & 0 & 0 & 0\\
 0 & 0 & 0 & 1 & 0 & 0\\
 0 & 0 & 0 & 0 &1 &1\\
 \end{pmatrix}
$$
Using the matrix $S^{'}$, we can define a subshift of finite type $\Sigma^{'},$
i.e. 
$$ \Sigma^{'}=\{({i_n})\in\{1,2,3\cdots,p-2\}^{\mathbb{N}}:S^{'}_{{i_n}, {i_{n+1}}}=1\}.$$
Let 
$(j_n)\in \Sigma^{'}$. For any $n\geq 1$, by the definition of Markov partition, there exists an expanding map $T_{j_n}=T_0 \mbox{ or } T_1$ such that $A_{j_{n+1}}\subset T_{j_n}(A_{j_{n}})$. Hence we can define a coding $(a_n)\in \{0,1\}^{\mathbb{N}}$ in the following way:
$$a_n=i \textrm{ if } T_{j_n}=T_i, i=0 \mbox{ or }1.$$
Define the projection of $\Sigma^{'}$. 
$$
\pi(\Sigma^{'})=\Big\{x=\pi((j_n)):  x=\sum_{n=1}^{\infty}\dfrac{a_n}{\beta^n},  
 (a_n) \textrm{ is obtained by the method above} \Big\}.
$$
Note that after we delete the switch region in the Markov partition $\{A_i\}_{i=1}^{p-1}$, $\beta^{-1}$ and 
$\beta^{-1}(\beta-1)^{-1}$ are still in $\pi(\Sigma^{'})$. We give a simple proof for this fact.  As we delete the switch region, i.e. correspondingly, we delete the block $A_i=[\beta^{-1}, \beta^{-1}(\beta-1)^{-1}]$ in the partition $\cup_{k=1}^{p-1}A_k$,  the point $\beta^{-1}$ only belongs to the block $A_{i-1}$, that is,  $\beta^{-1}$ is the right endpoint of $A_{i-1}$. Hence  we can only  implement $T_0$ on $\beta^{-1}$. Then the orbit of $1$ goes to the point $1$,
By the  assumption,  the greedy orbit of $1$ does not hit $(\beta^{-1}, \beta^{-1}(\beta-1)^{-1})$. Hence, the orbit of $1$ is always in $[0,\beta^{-1}]\cup [\beta^{-1}(\beta-1)^{-1},1]=\cup_{k=1}^{i-1}A_k\cup \cup_{k=i+1}^{p-1}A_k$, which implies that $\beta^{-1}\in \pi(\Sigma^{'})$. Analogous discussion is still correct for $\beta^{-1}(\beta-1)^{-1}$. 

Now we  state  the main result of  this section.
\setcounter{Theorem}{1}
\begin{Theorem}\label{Maintheorem}
Let  $1<\beta<2$. If the quasi-greedy orbit of $1$ does not hit $(\beta^{-1}, \beta^{-1}(\beta-1)^{-1}]$, and  the greedy expansion of $1$ in base $\beta$ is eventually periodic,  then we have $\dim_{H}(U_{\beta})=\dfrac{\log r}{\log \beta}$, where $r$ is the spectral radius of matrix $S^{'}$. 
\end{Theorem}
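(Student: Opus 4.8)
The plan is to reduce the statement to the graph-directed dimension theorems of Mauldin and Williams (Theorems \ref{SC} and \ref{NSC}) applied to the reduced subshift $\Sigma'$, following the scheme of the proof of Theorem \ref{Univoque set}. The essential new feature is that here the generating IFS $\{f_0,f_1\}$, $f_i(x)=(x+i)/\beta$, need \emph{not} be exactly overlapping, so Theorem \ref{Univoque set} does not apply verbatim; instead I would exploit that a univoque orbit never meets the overlap $[\beta^{-1},\beta^{-1}(\beta-1)^{-1}]$, which makes the exactness of the overlap irrelevant for the computation of $\dim_{H}(U_{\beta})$. First I would record that the eventual periodicity of the greedy orbit of $1$ makes the orbit finite, and, by the symmetry relation in the proof of Lemma \ref{Remark}, the greedy orbit of $\bar{1}=(2-\beta)(\beta-1)^{-1}$ is finite as well; hence the partition points $\{c_1,\dots,c_p\}$ form a finite set. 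Since $T_0,T_1$ send each $c_i$ and each of $0,\beta^{-1},\beta^{-1}(\beta-1)^{-1},1,(\beta-1)^{-1}$ to another partition point, $\{A_i=[c_i,c_{i+1}]\}_{i=1}^{p-1}$ is a genuine Markov partition for the greedy dynamics, so each block $A_j$ carries a single expanding map $T_k$ (the largest admissible digit) with $T_k(A_j)$ equal to a union of blocks. The hypothesis that the quasi-greedy orbit of $1$ avoids the interior of the switch region is exactly what guarantees that the greedy orbit of $1$ stays inside $[0,\beta^{-1}]\cup[\beta^{-1}(\beta-1)^{-1},(\beta-1)^{-1}]$, so that deleting the switch-region block to form $S'$ is consistent with the dynamics.

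Next I would match $U_{\beta}$ with $\pi(\Sigma')$ up to a countable set, in direct analogy with Lemmas \ref{lem:2.19} and \ref{lem:2.20}. If $x\in U_{\beta}$, its unique greedy orbit never enters $[\beta^{-1},\beta^{-1}(\beta-1)^{-1}]$, so the itinerary of blocks it visits avoids the switch-region block and yields a path in $\Sigma'$; thus $U_{\beta}\subseteq\pi(\Sigma')$. Conversely, a point $x\in\pi(\Sigma')$ has an orbit avoiding the \emph{interior} of the switch region: if this orbit also avoids every block endpoint $c_i$, then no digit choice ever arises and $x\in U_{\beta}$; otherwise $x$ lies in the countable set $C=\bigcup_{n\geq1}\bigcup_{(a_1\cdots a_n)}f_{a_1\cdots a_n}(\{c_1,\dots,c_p\})$. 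The delicate point, already flagged before the statement, is that the endpoints $\beta^{-1}$ and $\beta^{-1}(\beta-1)^{-1}$ remain in $\pi(\Sigma')$ although they are not univoque; they are harmlessly absorbed into $C$. This gives $\pi(\Sigma')\subseteq U_{\beta}\cup C$ with $C$ countable.

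I would then verify, exactly as in Lemma \ref{OSC}, that $\pi(\Sigma')$ is a graph-directed self-similar set satisfying the open set condition, using the open intervals $O_i=(c_i,c_{i+1})$: since all edges leaving a fixed block carry the single contraction $f_k=T_k^{-1}$ and the $O_{j}$ are pairwise disjoint, the images $f_k(O_{j})$ are disjoint and contained in the parent interval. For the dimension, the crucial simplification is that both $f_0$ and $f_1$ contract by the uniform ratio $\beta^{-1}$, so the weighted matrix of Mauldin and Williams is $A^t=\beta^{-t}S'$, with largest eigenvalue $\beta^{-t}r$ where $r=\rho(S')$; solving $\Phi(t)=\beta^{-t}r=1$ gives $t=\log r/\log\beta$. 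If the directed graph of $S'$ is strongly connected this is Theorem \ref{SC}; otherwise I would invoke Theorem \ref{NSC} together with the fact that for a nonnegative matrix $\rho(S')$ equals the maximum of the Perron roots $\rho(S'|_C)$ over the strongly connected components $C$, whence $t_1=\max_C \log\rho(S'|_C)/\log\beta=\log r/\log\beta$ in all cases. Since $C$ is countable it has Hausdorff dimension $0$, so $\dim_{H}(U_{\beta})=\dim_{H}(\pi(\Sigma'))=\log r/\log\beta$.

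The step I expect to be the main obstacle is the set identification of the second paragraph: establishing that removing the switch-region block captures $U_{\beta}$ up to a countable set \emph{without} the exact-overlap hypothesis. One must argue carefully that the only non-univoque points admitted into $\pi(\Sigma')$ are those whose orbit touches the boundary of the switch region or a block endpoint, and that these all lie in the countable set $C$; and one must check that the greedy labelling used to define the Markov partition is consistent with the coding $(a_n)$ read off from an arbitrary path in $\Sigma'$.
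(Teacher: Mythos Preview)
Your proposal is correct and follows essentially the same route as the paper: sandwich $U_{\beta}$ between $\pi(\Sigma')$ and $\pi(\Sigma')$ minus a countable set (the analogue of Lemmas \ref{lem:2.19} and \ref{lem:2.20}), verify the open set condition as in Lemma \ref{OSC}, and apply Mauldin--Williams with the uniform contraction ratio $\beta^{-1}$. The paper's own proof is only a short outline, and you supply several details it leaves implicit---in particular the explicit spectral computation $\Phi(t)=\beta^{-t}r$ and the handling of the non--strongly-connected case via Theorem \ref{NSC}; the paper also takes the slightly smaller countable set $C=\bigcup_{n}\bigcup_{(i_1\cdots i_n)}f_{i_1\cdots i_n}\{\beta^{-1},\beta^{-1}(\beta-1)^{-1}\}$ rather than preimages of all block endpoints, but this is immaterial.
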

We give an outline for the proof of this theorem. 
The main ideas are analogous to  the proofs of Lemmas \ref{lem:2.19} and \ref{lem:2.20}. 
We can show that there exists a countable set $C$ such that
$U_{\beta}\subset \pi(\Sigma^{'})\subset U_{\beta}\cup C$, i.e. $\pi(\Sigma^{'})=U_{\beta}\cup C$, where $C=\cup_{n=0}^{\infty}\cup_{(i_1\cdots i_n)\in \{0,1\}^{n}}f_{i_1\cdots i_n}\{\beta^{-1}, \beta^{-1}(\beta-1)^{-1}\}$.  Then $\pi(\Sigma^{'})$ is a graph-directed self-similar set satisfying the open set condition, see  Lemma \ref{OSC}. Therefore, the stated result is proved. 
\setcounter{Remark}{2}
\begin{Remark}
We explain here why we generalize  de Vries and Komornik's result regarding  the calculation of  $\dim_{H}(U_{\beta})$. In the last section, we proved a necessary and sufficient condition  which can determine when  $\tilde{U}_{\beta}$ is a subshift of finite type. If the quasi-greedy orbit of $1$ hits $(\beta^{-1},\beta^{-1}(\beta-1)^{-1})$, then $\tilde{U}_{\beta}$ is a subshift of finite type. For this case we can implement the idea of Theorem 2.4 from  \cite{SKK}, and find $\dim_{H}(U_{\beta})$ using the machinery of  Mauldin and Williams \cite{MW}, see the details in \cite{SKK}. If  the quasi-greedy orbit of $1$ hits $\beta^{-1}(\beta-1)^{-1}$ for the first time, then $\tilde{U}_{\beta}$ is  also a  subshift of finite type. For this case, the greedy expansion of $1$ is finite (Lemma \ref{Remark}), and we can make use of Theorem \ref{Maintheorem}  to calculate $\dim_{H}(U_{\beta})$. 
However, Theorem \ref{Maintheorem} enables us to calculate some cases such that $\tilde{U}_{\beta}$ is not a subshift of finite type,  see some examples below.  As such we generalize  the result of    de Vries and Komornik. Moreover, if the self-similar set $K$ is an interval, then the IFS is not necessarily   exactly overlapping.  In this case the univoque set  can  be identified with an open dynamical system.  Hence, similar results as Theorem \ref{Maintheorem} can be found. 
\end{Remark}

Motivated by Theorem \ref{Maintheorem}, we prove that $\tilde{U}_{\beta}$ is a sofic shift if and only if $\tilde{U}_{\beta}$ is  a  subshift of finite type. This result is essentially proved by de Vries and Komornik \cite[Theorem 1.8]{MK}. To do so, we introduce some definitions and results  for sofic shift. We adopt the definitions from  Chapter 3 of \cite{LM}. 
\setcounter{Definition}{3}
\begin{Definition}
Let $X$ be the full shift over alphabet $\mathcal{A}$. A subset $Y$ of $X$ is called a \textbf{subshift} if $Y=X_{\mathcal{F}}$ for some collection $\mathcal{F}$ of forbidden blocks over $\mathcal{A}$, i.e. any sequence in  $Y$ does not contain  blocks from $\mathcal{F}$. 
If the collection $\mathcal{F}$ is finite, then $Y$ is called a \textbf{subshift of finite type}. 
\end{Definition}
\begin{Definition}\label{SoficDefinition}
Let $(\Sigma_{S}, \sigma)$ be a subshift of $\{0,1\}^{\mathbb{N}}$.  We say $(\Sigma_{S}, \sigma)$ is a \textbf{sofic shift} if it is a factor of  some subshift of finite type $(\Sigma, \sigma)$, i.e. there exists a continuous map  $\phi: \Sigma\to \Sigma_{S}$ such that  $\phi$ is onto
and 
$$\sigma\circ \phi=\phi\circ \sigma.$$
The map $\phi$ is called a \textbf{factor map}.  If  $(\Sigma_{S}, \sigma)$  is a sofic shift but not a subshift of finite type, then we call  $(\Sigma_{S}, \sigma)$  a \textbf{strictly sofic shift}. 
\end{Definition}
In \cite{LM} Lind and Marcus give another equivalent definition of  a sofic shift.  For the greedy $\beta$-expansions, 
 some related results should be mentioned. The following result was proved by Akiyama and Scheicher \cite{ASK}. 
\setcounter{Theorem}{5}
\begin{Theorem}
Given any $1<\beta<2$. Then  the set of all the greedy expansions in base $\beta$ is a sofic shift if and only if the greedy expansion of $1$ is eventually periodic.  Moreover, 
the set of all the greedy expansions in base $\beta$ is a subshift of finite type  if and only if the greedy expansion of $1$ is finite.
\end{Theorem}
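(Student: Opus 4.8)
The plan is to work throughout with the lexicographic description of the greedy shift. Write $\Sigma_{\beta}^{g}$ for the set of all greedy expansions in base $\beta$ and $(\eta_n)$ for the quasi-greedy expansion of $1$. The starting point is the greedy analogue of Theorem \ref{Uniquecodings}: a $0$--$1$ sequence lies in (the closure of) $\Sigma_{\beta}^{g}$ if and only if each of its shifts is lexicographically dominated by $(\eta_n)$, i.e. $\sigma^{k}(a)\preceq(\eta_n)$ for all $k\ge 0$; I would simply quote this from \cite{MK} and the references therein, and note that passing to the closure does not affect soficity or the finite-type property. The first routine step is to translate the hypotheses on the greedy expansion $d(1)$ of $1$ into statements about $(\eta_n)$ using the computation in the proof of Lemma \ref{Remark}: if $d(1)$ is infinite then $(\eta_n)=d(1)$, while if $d(1)=(d_1\cdots d_n 0^{\infty})$ is finite (so $d_n=1$) then $(\eta_n)=(d_1\cdots d_{n-1}0)^{\infty}$ is purely periodic. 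Hence $d(1)$ is eventually periodic exactly when $(\eta_n)$ is, and $d(1)$ is finite exactly when $(\eta_n)$ is the periodic sequence coming from a terminating greedy expansion.

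For the soficity equivalence I would invoke the follower-set criterion from \cite{LM}: a subshift is sofic if and only if it has only finitely many distinct follower sets, where the follower set $F(w)$ of an admissible word $w$ is the set of right-infinite admissible continuations. The clean observation is that for a prefix $w=\eta_1\cdots\eta_j$ of $(\eta_n)$ one has $\max F(w)=\sigma^{j}((\eta_n))$: the continuation $\sigma^{j}((\eta_n))$ is admissible since $w\,\sigma^{j}((\eta_n))=(\eta_n)$, and the case $k=0$ of the lexicographic condition forces every continuation to be $\preceq\sigma^{j}((\eta_n))$. Consequently the map sending a tail $\sigma^{j}((\eta_n))$ to the follower set $F(\eta_1\cdots\eta_j)$ is injective. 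This yields both implications at once: if $(\eta_n)$ is eventually periodic it has only finitely many distinct tails, so the canonical labelled (Parry) graph whose vertices are these tails is finite and presents $\Sigma_{\beta}^{g}$ as a factor of an SFT in the sense of Definition \ref{SoficDefinition}; conversely, if $\Sigma_{\beta}^{g}$ is sofic it has finitely many follower sets, so by injectivity $(\eta_n)$ has finitely many distinct tails and is eventually periodic. Combined with the first paragraph this gives the sofic half.

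For the finite-type equivalence I would pass to the forbidden-word description: $\Sigma_{\beta}^{g}$ is an SFT if and only if it admits a \emph{finite} set of forbidden factors. When $d(1)=d_1\cdots d_n0^{\infty}$ is finite, the condition $\sigma^{k}(a)\preceq(\eta_n)$ collapses to a finite list of forbidden words of length at most $n$ (those lexicographically exceeding the relevant prefix of $d_1\cdots d_n$), so $\Sigma_{\beta}^{g}$ is an SFT. For the converse I would argue that an infinite $d(1)$ produces \emph{arbitrarily long minimal} forbidden words: since an infinite $d(1)$ has infinitely many positions $m$ with $d_m=0$, the words obtained by raising such a digit are forbidden, and using the shift-maximality $\sigma^{k}((\eta_n))\preceq(\eta_n)$ one checks that infinitely many of them have no shorter forbidden suffix, hence are minimal. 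A finite forbidden set is therefore impossible, so $\Sigma_{\beta}^{g}$ is not an SFT unless $d(1)$ is finite.

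The main obstacle I anticipate is the two necessity directions, and in particular making precise that an infinite $d(1)$ really forces minimal forbidden words of unbounded length (so that eventual periodicity of $(\eta_n)$, which soficity already grants, is not enough for finite type). Both difficulties hinge on the shift-maximality of the quasi-greedy expansion, and the cleanest uniform route is to encode everything through the minimal right-resolving (Fischer) presentation: its vertex set is finite exactly when $(\eta_n)$ is eventually periodic, and it has no forbidden transitions of unbounded reach exactly when $d(1)$ is finite, at which point the greedy-to-quasi-greedy translation and the explicit forbidden-word list of the previous paragraph finish the argument, recovering the statement of Akiyama and Scheicher \cite{ASK}.
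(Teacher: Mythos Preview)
The paper does not prove this theorem: it is quoted as a known result of Akiyama and Scheicher \cite{ASK} (and, in essence, of Parry, Bertrand-Mathis, and Blanchard before them), stated only as motivation for Theorem \ref{Sofic}. So there is no ``paper's own proof'' to compare your proposal against.

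That said, your plan is the standard one and is essentially correct. The sofic equivalence via follower sets is clean: the injectivity observation $\max F(\eta_1\cdots\eta_j)=\sigma^{j}((\eta_n))$ does exactly what you need for the necessity direction, and the Parry graph on the finitely many tails handles sufficiency. For the SFT equivalence, the sufficiency (finite $d(1)$ gives a finite forbidden list) is immediate. The one place that genuinely needs more than a sentence is the step you flag yourself: showing that an infinite $d(1)$ forces minimal forbidden words of unbounded length. The cleanest way to close this is not via minimal forbidden words directly but via a splicing argument: if $\overline{\Sigma_\beta^g}$ were an $(N{+}1)$-block SFT, pick $m>N$ with $d_{m+1}=1$ (possible since $d(1)$ does not end in $0^\infty$), and note that the sequence $d_1\cdots d_m\,1\,0^\infty$ has all its $(N{+}1)$-blocks appearing either in $d(1)$ or in $d_1\cdots d_m d_{m+1}0^\infty$ (both admissible), yet the sequence itself strictly exceeds $d(1)$ at position $m{+}1$ and hence is forbidden. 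Your final paragraph's appeal to the Fischer cover is also fine, but the splicing argument is more self-contained and avoids importing further machinery from \cite{LM}. One small caveat: be explicit early on that the greedy shift as defined is not closed (e.g.\ $1^\infty$ is a limit point when $\beta<2$), so ``sofic'' and ``SFT'' are statements about its closure; you mention this in passing, but it deserves a sentence since otherwise the theorem as literally stated is about a non-subshift.
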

However, for $\tilde{U}_{\beta}$, the following result is a little surprising. Our main result is that $\tilde{U}_{\beta}$ is a sofic shift if and only if $\tilde{U}_{\beta}$ is a subshift of finite type. Denote by $\mathcal{U}$  the set of $\beta$ for which the $\beta$-expansion of 1 is unique.
\begin{Theorem}\label{Sofic}
Let $1<\beta<2$. 
The following statements are equivalent. 
\begin{itemize}
\item [1)] $\beta\in (1,2)\setminus \overline{\mathcal{U}}$. 
\item [2)] $\tilde{U}_{\beta}$ is a subshift. 
\item [3)] $\tilde{U}_{\beta}$ is a subshift of finite type.
\item [4)]  $\tilde{U}_{\beta}$ is a closed set. 
\item [5)]  $\tilde{U}_{\beta}$ is a sofic shift.
\end{itemize}
\end{Theorem}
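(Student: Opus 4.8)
The plan is to prove the five conditions equivalent by closing the cycle $1)\Rightarrow 3)\Rightarrow 5)\Rightarrow 2)\Rightarrow 4)\Rightarrow 1)$, where only the last arrow carries real content. The implication $1)\Rightarrow 3)$ is exactly the forward direction of Theorem \ref{criterion} (de Vries and Komornik). The implication $3)\Rightarrow 5)$ is the standard fact that a subshift of finite type is sofic, witnessed by the identity factor map. For $5)\Rightarrow 2)$ I would argue directly from Definition \ref{SoficDefinition}: writing $\tilde U_\beta=\phi(\Sigma)$ with $\phi$ a factor map out of an SFT $\Sigma$, the set $\tilde U_\beta$ is the continuous image of a compact set, hence closed, and it is forward shift-invariant since $\sigma(\tilde U_\beta)=\phi(\sigma(\Sigma))\subseteq\phi(\Sigma)=\tilde U_\beta$; a closed, shift-invariant subset of the full shift is a subshift. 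Finally $2)\Rightarrow 4)$ is immediate, since a set cut out by forbidden blocks is automatically closed. It is also worth recording that $\tilde U_\beta$ is always shift-invariant, because by Theorem \ref{Uniquecodings} membership of a sequence is a condition imposed on all of its tails; consequently $2)$ and $4)$ are in fact equivalent, and the substance of the theorem is the single remaining arrow $4)\Rightarrow 1)$.

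I would prove $4)\Rightarrow 1)$ in contrapositive form: if $\beta\in\overline{\mathcal U}$, then $\tilde U_\beta$ is not closed. Let $(\eta_n)$ be the quasi-greedy expansion of $1$. Since $\beta\in\overline{\mathcal U}$, Theorem \ref{quasi-expansion} gives
\[
\overline{\eta_k\eta_{k+1}\eta_{k+2}\cdots}<\eta_1\eta_2\eta_3\cdots\qquad\text{for all }k\ge 1,
\]
while the defining lexicographic property of the quasi-greedy expansion gives $\sigma^k((\eta_n))\le(\eta_n)$ for every $k$. My candidate limit point is $w=(0\,\eta_1\eta_2\eta_3\cdots)$. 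Using Theorem \ref{Uniquecodings} I would verify that $w$ satisfies all the admissibility inequalities in their \emph{non-strict} form: at the initial digit $0$ the tail is $\sigma^1(w)=(\eta_n)\le(\eta_n)$; at a later index carrying a $0$ the relevant tail is a shift $\sigma^j((\eta_n))\le(\eta_n)$; and at a later index carrying a $1$ the reflected tail is $\overline{\sigma^j((\eta_n))}<(\eta_n)$ by the displayed inequality. At the same time $w\notin\tilde U_\beta$: at the very first index the digit is $0$ yet $\sigma^1(w)=(\eta_n)$, so the \emph{strict} inequality demanded in Theorem \ref{Uniquecodings} fails. Thus $w$ lies in the set cut out by the non-strict inequalities but not in $\tilde U_\beta$.

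The remaining, and I expect hardest, point is to show that $w$ is genuinely a limit of points of $\tilde U_\beta$, for only then does $w\in\overline{\tilde U_\beta}\setminus\tilde U_\beta$ establish non-closedness. This amounts to the density statement that $\overline{\tilde U_\beta}$ coincides with the set defined by the non-strict admissibility inequalities. I would establish it by constructing explicit univoque approximants: for each $N$ keep the prefix $0\,\eta_1\cdots\eta_N$ and append a tail $\tau^{(N)}$ making the concatenation univoque, for instance a reflection-alternating (Thue--Morse type) continuation built from the block $0\,\eta_1\cdots\eta_N$, whose admissibility is forced by the strict inequalities above via Theorem \ref{Uniquecodings}. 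Each such sequence lies in $\tilde U_\beta$ and agrees with $w$ on its first $N+1$ coordinates, so these approximants converge to $w$ in the product topology, producing a point of $\overline{\tilde U_\beta}\setminus\tilde U_\beta$. This completes $4)\Rightarrow 1)$ and hence the cycle. The delicate part is precisely the verification, from the inequalities of Theorem \ref{quasi-expansion}, that such admissible tails exist and that the resulting sequences remain univoque; this lexicographic bookkeeping is where the argument must be carried out with care.
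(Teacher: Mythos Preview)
Your proposal and the paper share the one genuinely new observation needed here: a sofic shift, being the continuous image of a compact subshift of finite type, is closed. The paper exploits this far more economically than you do. It simply cites \cite{MK} for the full equivalence of 1)--4), then adds $3)\Rightarrow 5)$ (trivial) and $5)\Rightarrow 4)$ (your compactness argument), and is done. In particular the paper does \emph{not} reprove $4)\Rightarrow 1)$; it treats that implication as already established in \cite{MK}.

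Your route differs in that you attempt to prove $4)\Rightarrow 1)$ from scratch, by showing that when $\beta\in\overline{\mathcal U}$ the sequence $w=0\eta_1\eta_2\cdots$ lies in $\overline{\tilde U_\beta}\setminus\tilde U_\beta$. This is a correct strategy---indeed it is essentially how the implication is obtained in \cite{MK}---but it is unnecessary within the paper's framework, since Theorem~\ref{criterion} is already quoted as an if-and-only-if statement and can be invoked in both directions. Moreover, the step you yourself flag as hardest (constructing univoque approximants via a Thue--Morse type continuation of $0\eta_1\cdots\eta_N$) is only sketched; the lexicographic verification that such continuations remain univoque is real work and is not carried out in your proposal. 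So as written your argument has a genuine gap at exactly the point the paper avoids by citation. You could close the cycle immediately by replacing your $4)\Rightarrow 1)$ with a direct appeal to the reverse direction of Theorem~\ref{criterion} (or to the equivalence of 1)--4) from \cite{MK}), retaining only your clean $5)\Rightarrow 4)$ step, which matches the paper's proof exactly.
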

\begin{proof}
For the equivalence of the first four statements, the detailed proofs can be found in \cite{MK}. 
It is well known that  every subshift of finite type is a sofic shift, \cite[Theorem 3.1.5]{LM}. On the other hand, if $\tilde{U}_{\beta}$ is a sofic shift, then by the Definition \ref{SoficDefinition}, $\tilde{U}_{\beta}$ is a factor of a subshift of finite type $\Sigma^{''}$, i.e. there exists a continuous map $\phi:\Sigma^{''}\to \tilde{U}_{\beta}$ such that 
$$\phi(\Sigma^{''})=\tilde{U}_{\beta}.$$
Since $\Sigma^{''}$ is compact and $\phi$ is continuous, it follows that $\tilde{U}_{\beta}$ is compact and hence closed. Therefore, by the fourth  and third equivalent conditions of Theorem \ref{Sofic}, $\tilde{U}_{\beta}$ is a subshift of finite type. 
\end{proof}
\setcounter{Corollary}{7}
\begin{Corollary}
For any $\dfrac{1+\sqrt{5}}{2}<\beta<2$, 
 $\tilde{U}_{\beta}$ cannot be a  strictly sofic shift. 
 \end{Corollary}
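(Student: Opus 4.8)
The plan is to derive the corollary directly from the equivalence packaged in Theorem~\ref{Sofic}, so that essentially no new work is required. First I would recall Definition~\ref{SoficDefinition}: a \emph{strictly sofic shift} is, by definition, a sofic shift that fails to be a subshift of finite type. Thus the statement to be proved amounts to showing that, for $\frac{1+\sqrt{5}}{2}<\beta<2$, the shift $\tilde{U}_{\beta}$ cannot simultaneously be sofic and fail to be of finite type.

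Next I would argue by contradiction. Suppose $\tilde{U}_{\beta}$ were strictly sofic. Unwinding Definition~\ref{SoficDefinition}, this means that $\tilde{U}_{\beta}$ satisfies statement 5) of Theorem~\ref{Sofic} (it is a sofic shift) while violating statement 3) (it is not a subshift of finite type). But Theorem~\ref{Sofic} asserts that statements 3) and 5) are equivalent; in particular 5) implies 3). Hence $\tilde{U}_{\beta}$ would be a subshift of finite type, directly contradicting the assumption that it is not. This contradiction shows that no $\beta$ in the given range can make $\tilde{U}_{\beta}$ strictly sofic, which is the assertion of the corollary. (The implication 5)$\Rightarrow$3) is the only part of Theorem~\ref{Sofic} that is actually invoked, through the chain ``sofic $\Rightarrow$ factor of a compact SFT $\Rightarrow$ compact $\Rightarrow$ closed $\Rightarrow$ SFT'' established in its proof.)

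Since all the substantive content is already encoded in Theorem~\ref{Sofic}, I do not expect any genuine obstacle here; the only point that needs care is confirming that the definition of \emph{strictly sofic} is precisely the conjunction of ``sofic'' with the negation of ``subshift of finite type,'' so that the contradiction is exact. I would also remark that, although the corollary is stated for $\frac{1+\sqrt{5}}{2}<\beta<2$, which is the regime of genuine interest throughout this section (for $1<\beta\le\frac{1+\sqrt{5}}{2}$ the set $\tilde{U}_{\beta}$ is finite and hence trivially of finite type), the same reasoning applies verbatim for every $1<\beta<2$, because Theorem~\ref{Sofic} is itself stated for that full range.
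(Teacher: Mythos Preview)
Your proposal is correct and matches the paper's approach: the paper states the corollary immediately after Theorem~\ref{Sofic} with no proof, treating it as an immediate consequence of the equivalence $3)\Leftrightarrow 5)$, which is exactly the argument you give. Your additional remark about the range $1<\beta\le\frac{1+\sqrt5}{2}$ is a reasonable clarification but not needed for the proof.
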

We finish this section by giving some examples and remarks. 
\setcounter{Example}{8}
\begin{Example}
Let $\beta\approx 1.8393$ be the appropriate root of $x^3=x^2+x+1$. Then  $$\dim_{H}(U_{\beta})=\dfrac{\log G}{\log \beta},$$ where $G$ is the golden mean. 
 
In view of Theorem \ref{Maintheorem}, it remains to calculate the spectral radius of $S^{'}$. 
Here, for simplicity we only consider the univoque points in $[0,1]$ as the greedy orbits eventually fall  into $[0,1]$. All the following examples use this idea. 
From Example \ref{Multi}, we know that 
$$
 S^{'}=\begin{pmatrix}
  1 & 1 & 0 & 0& 0 \\
   0 & 0& 1 & 0& 0 \\ 
 0 & 0 & 0 & 1& 1 \\ 
 0 & 1& 1 & 0& 0 \\
 0 & 0 & 0 & 1& 0 \\
 \end{pmatrix}
,$$
therefore the spectral radius of this matrix is $G$. 
\end{Example}
\setcounter{Remark}{9}
\begin{Remark}
In \cite{GS}, Glendenning and Sidorov stated (without proof) that for any multinacci number $c_n$, i.e. the largest positive root of $x^n=\sum_{i=1}^{n-1}x^i, n\geq 3$,   $\dim_{H}(U_{\beta})=\dfrac{\log c_{n-1}}{\log c_n}$. Theorem \ref{Maintheorem} gives a proof of this result.  For this class, the quasi-greedy orbit of $1$ hits $\beta^{-1}$ for the first time, which means that $\tilde{U}_{\beta}$ is not a subshift of finite type. 
\end{Remark}
\setcounter{Example}{10}
\begin{Example}
Let $\beta\approx 1.8668$ be the Pisot number satisfying $ x^4-2x^3+x-1=0$, then $\dim_{H}(U_{\beta})=\dfrac{\log G}{\log\beta}$, where $G$ is the golden mean.
$$
 S^{'}=\begin{pmatrix}
  1 & 1 & 0 & 0& 0 \\
   0 & 0& 1 & 0& 0 \\ 
 0 & 0 & 0 & 1& 1 \\ 
 0 & 1 & 1 & 0& 0 \\
 0 & 0 & 0 & 1& 0 \\
 \end{pmatrix}
$$
For this example the quasi-greedy orbit of $1$ hits $\beta^{-1}(\beta-1)^{-1}$ for the first time, and $\tilde{U}_{\beta}$ is a subshift of finite type. 
\end{Example}
\begin{Example}
Let $\beta$ be the  largest positive root of $x^4-x^3-2x^2+1=0$, then $\dim_{H}(U_{\beta})=\dfrac{\log r}{\log \beta}\approx\dfrac{\log 1.7693}{\log \beta}$, where $r$ is the real root of $x^3-2x-2=0$. 
$$
 S^{'}=\begin{pmatrix}
  1 & 1 & 0 & 0 & 0&0 & 0\\
 0 & 0 & 1 & 0 & 0&0 & 0\\ 
0& 0 & 0 & 1 & 1&0 & 0\\
  0 & 0 & 0 & 0 & 0&1 & 1\\
0& 1 & 1 & 0 & 0&0 & 0\\
0& 0 & 0 & 1 & 1&0 & 0\\
 0 & 0 & 0 & 0 & 0&1 & 0\\ 
 \end{pmatrix}
.$$
The spectral radius  of this matrix is the  real root of $x^3-2x-2=0$. The solution is about $1.7693$.  For this example, $1$ has a unique coding in base $\beta$, and this $\beta$ is a Pisot number.  In this case, $\tilde{U}_{\beta}$ is not a subshift of finite type. 
\end{Example}
\setcounter{Remark}{12}
\begin{Remark}
Recently, Komornik, Kong and Li \cite{KKLL} gave a uniform formula of $\dim_{H}(U_{\beta})$, i.e., $ \dim_{H}(U_{\beta})=\dfrac{h(\tilde{U}_{\beta})}{\log \beta}$, where $ h(\tilde{U}_{\beta})$ is the entropy of $\tilde{U}_{\beta}$. In fact, Glendinning and Sidorov \cite{GS} mentioned this result in their paper (without proof). It is very difficult to calculate the  entropy of $\tilde{U}_{\beta}$ for every $\beta>1$.  However, our main results, namely \cite[Theorem 2.4]{SKK}, Theorem \ref{Maintheorem}  and Theorem \ref{Dynamical}  allow us to calculate $\dim_{H}(U_{\beta})$ for almost all $\beta>1$. Similar result is also obtained in \cite{KL} by a different method.
\end{Remark}
\section{Doubling map with holes}
In this section, we consider the doubling map with holes. Our main motivation is to answer partially one problem posed in the PhD thesis of   Alcaraz Barrera \cite{Barrera}. 
We start with some definitions.
Let $T(x)=2 x \mod 1$ be the doubling map defined on $[0,1)$. Given any $(a,b)\subset [0,1)$, define
$$J(a,b):=\{x\in [0,1):T^{n}(x)\notin (a,b), \forall\, n\geq 0\}.$$
$$\tilde{J}(a,b):=\{(a_n)\in\{0,1\}^{\mathbb{N}}: (a_n) \textrm{ is the binary expansions of } x \in J(a,b)\}.$$
We refer the interval $(a,b)$ as the hole. 
  Alcaraz Barrera posed the following question in his PhD thesis.
\begin{Question}\label{QUE}
For which hole $(a,b)\subset \left(0,\dfrac{1}{2}\right)$ with the property that the orbits of $a$ and $b$ do not hit $(a,b)$, is $J(a,b)$  measure theoretically  isomorphic to a subshift of finite type?
\end{Question}
\setcounter{Remark}{1}
\begin{Remark}
If  the orbits of $a$ and $b$  hit $(a,b)$, then  $\tilde{J}(a,b)$ is a subshift of  finite type. This fact was proved in \cite[Proposition 4.1]{STS}. That is why the extra assumption  is added on the orbits of $a$ and $b.$
\end{Remark}
Our  partial answer to Question \ref{QUE} is the following theorem.
\setcounter{Theorem}{2}
\begin{Theorem}\label{subshift}
For any hole $(a,b)\subset \left(0,\dfrac{1}{2}\right)$, if the orbits of $a$ and $b$ are eventually periodic, and the associated  adjacency matrix of $J(a,b) $, i.e. $S^{'},$ is irreducible, then $J(a,b)$ is measure theoretically  isomorphic to a subshift of finite type.
\end{Theorem}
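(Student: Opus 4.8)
The plan is to realize the survivor set $J(a,b)$ as the projection of a subshift of finite type obtained from a Markov partition, in direct analogy with the construction of $\pi(\Sigma')$ in Sections 2 and 4, and then to upgrade this coincidence to a measure-theoretic isomorphism using the irreducibility of $S'$. I would first pass to the inverse-branch framework of Lemma \ref{Dynamical lemma}: the doubling map $T$ is the common extension of the two inverse branches of the IFS $f_0(x)=x/2$, $f_1(x)=(x+1)/2$, whose attractor is $[0,1]$; explicitly $T_0(x)=2x$ on $[0,1/2]$ and $T_1(x)=2x-1$ on $[1/2,1]$. By Lemma \ref{Dynamical lemma} the binary codings of $x$ are exactly the digit sequences along which the orbit stays in $[0,1]$, and the defining condition of $J(a,b)$, that every iterate $T^n(x)$ avoid the hole $(a,b)$, is shift-invariant on codings; hence $\tilde{J}(a,b)$ is a subshift.

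Next I would build the Markov partition. Since the orbits of $a$ and $b$ are eventually periodic, the forward orbits $\{T^n(a)\}_{n\ge 0}$ and $\{T^n(b)\}_{n\ge 0}$ take only finitely many values; collecting these with the branch point $1/2$ and the endpoints $0,1$ gives finitely many partition points $c_1<c_2<\cdots<c_p$. Because each $T^n(a)$ and $T^n(b)$ is again one of the $c_i$, the image under $T_0$ or $T_1$ of every interval $[c_i,c_{i+1}]$ is a union of partition intervals, which is precisely the Markov property of Lemma \ref{Partition}; this yields an adjacency matrix $S$ exactly as in Example \ref{Multi}. The hole $(a,b)$, whose endpoints lie among the $c_i$ and whose orbit avoids it by hypothesis, is itself a block (or union of blocks) of this partition, so deleting the corresponding rows and columns of $S$ produces $S'$ and its subshift of finite type $\Sigma'$, whose admissible itineraries are exactly those never entering the hole.

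I would then transfer the two-sided inclusions of Lemmas \ref{lem:2.19} and \ref{lem:2.20}: the coding map $\pi$ satisfies $J(a,b)\subset\pi(\Sigma')\subset J(a,b)\cup C$, where $C$ is the countable set of points some orbit of which meets a partition endpoint, so that $J(a,b)$ and $\pi(\Sigma')$ differ only by $C$. Finally, since $S'$ is irreducible, the SFT $(\Sigma',\sigma)$ is topologically transitive and carries a unique measure of maximal entropy (the Parry measure) $\nu$, which is non-atomic in the only nontrivial case of positive entropy. The map $\pi$ intertwines $\sigma$ with $T$ and is injective off the countable set of points with ambiguous itinerary; discarding this set together with $\pi^{-1}(C)$---a $\nu$-null set, since both are countable and $\nu$ is non-atomic---makes $\pi$ a measurable bijection between full-measure subsets conjugating $(\Sigma',\sigma,\nu)$ with $(J(a,b),T,\pi_{*}\nu)$, which is the asserted measure-theoretic isomorphism.

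The step I expect to be the main obstacle is the construction of the Markov partition: one must verify that no branch image straddles a partition point, i.e.\ that the finite set $\{c_i\}$ truly closes up under $T_0$ and $T_1$ into exact unions of blocks. Both hypotheses are essential here---eventual periodicity guarantees that the orbits are finite, while the location $(a,b)\subset(0,1/2)$ controls how $1/2$ interlaces with the orbit points so that the partition is genuinely Markov---and correctly handling the transient pre-periodic segments of the orbits of $a$ and $b$ is the delicate point.
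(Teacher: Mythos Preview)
Your proposal is correct and follows essentially the same route as the paper: build a Markov partition from the finite forward orbits of $a$ and $b$ together with $0$, $1/2$, $1$; delete the hole blocks to obtain $S'$ and the subshift $\Sigma'$; and use the Parry measure (available by irreducibility of $S'$) to upgrade the coding bijection, valid off a countable set, to a measure-theoretic isomorphism between $(J(a,b),T)$ and $(\Sigma',\sigma)$. Your anticipated obstacle is not a genuine one: since the partition-point set is a union of full forward $T$-orbits together with $\{0,1/2,1\}$, it is forward-invariant under $T$, and because $1/2$ is included each block lies in a single branch domain, so the image of every block is automatically an interval with partition-point endpoints and hence a union of blocks---neither the hypothesis $(a,b)\subset(0,1/2)$ nor the pre-periodic segments of the orbits require any special handling for the Markov property.
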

\setcounter{Remark}{3}
\begin{Remark}
The underlying measure and the matrix $S^{'}$ in the theorem will be given later in this section.   The reason why we need the irreducibility condition is to define the Parry measure \cite{Par64}. 
An analogous result holds for several holes. It is easy to see that any point  $x\in [0,1)$ can be approximated by a sequence $(x_n)_{n=1}^{\infty}\in [0,1)$ such that each $x_n$ has an eventually periodic coding. With this observation,  for many pairs $(x_n, y_n)$, if $x_n$ and $y_n$ have eventually periodic codings, then by Theorem \ref{subshift},
the event that  $J(x_n,y_n)$ is   measure theoretically isomorphic to a subshift of finite type happens for many cases. Calculating the Hausdorff dimension of $J(a,b)$ is indeed analogous to Theorem \ref{Maintheorem}. For this problem irreducibility condition  is not necessary.  We do not give the detailed proof for this case. 
\end{Remark}
The proof of Theorem \ref{subshift} follows the same idea used in section 2. 
Since the  orbits of $a$ and $b$ are eventually periodic, we can partition $[0,1)$ by the orbits of $a$ and $b$. 
Denote the values of  the orbits of $a$ and $b$ by $D=\{d_1,d_2,\cdots, d_p\}$. If $0, 2^{-1},a,b$ and $1$ are not in this set, we put these  points in  $D$. Without loss of generality, we still use the set $D=\{0=d_1,d_2,\cdots,d_p=1\}$. These points give a partition of $[0,1)$, i.e., 
$$[0,1)=\cup_{i=1}^{p-1}[d_i,d_{i+1}).$$
Note that $1$ is not a point of the partition.  We still use the notation above as it will not effect our main result. 
It is not difficult to see that each $[d_i,d_{i+1})$ is mapped into an interval which is the union of some sets of the partition, see the following example.
\setcounter{Example}{4}
\begin{Example}\label{example}
Let $a=(01010)^{\infty}_2 $ and $b=(10010)^{\infty}_2$, then the orbit of a is $$\left\{a=\dfrac{10}{31}, T(a)=\dfrac{20}{31}, T^2(a)=\dfrac{9}{31},T^3(a)=\dfrac{18}{31},T^4(a)=\dfrac{5}{31} \right\}$$
the orbit of $b$ is 
$$\left\{b=\dfrac{18}{31},T(b)=\dfrac{5}{31},T^2(b)=\dfrac{10}{31},T^3(b)=\dfrac{20}{31},T^4(b)=\dfrac{9}{31} \right\},$$
We  partition $[0,1)$ as follows, 
\begin{eqnarray*}
\left[0,1\right)&=&\left[0, \dfrac{5}{31}\right)\cup \left[\dfrac{5}{31}, \dfrac{9}{31}\right)\cup \left[\dfrac{9}{31}, \dfrac{10}{31}\right)\cup \left[\dfrac{10}{31}, \dfrac{1}{2}\right)\cup \left[\dfrac{1}{2}, \dfrac{18}{31}\right)\cup \left[\dfrac{18}{31}, \dfrac{20}{31}\right)\cup \left[\dfrac{20}{31}, 1\right)\\
&=&A\cup B\cup C\cup D\cup E\cup F\cup G
\end{eqnarray*}
This is  a Markov partition of $[0,1)$ as we have

$T(A)=A\cup B\cup C,T(B)=D\cup E,T(C)=E,T(D)=G,T(E)=A,T(F)=B$ and $T(G)=C\cup D\cup E\cup F\cup G$.
Subsequently we can define an adjacency matrix
$$
 S=\begin{pmatrix}
  1 & 1 & 1 & 0& 0 & 0& 0\\
  0 & 0 & 0 & 1& 1 & 0& 0\\  
0 & 0 & 0 & 0& 1 & 0& 0\\  
0 & 0 & 0 & 0& 0 & 0& 1\\
  1 & 0 & 0 & 0& 0 & 0& 0\\
  0 & 1 & 0 & 0& 0 & 0& 0\\
  0& 0 & 1 & 1& 1 & 1& 1\\
 \end{pmatrix}.
$$
\end{Example}
Recall the Markov partition of  $[0,1)= \bigcup_{i=1}^{p-1}[d_i, d_{i+1})$, we identify each intervals $[d_i, d_{i+1})$ with a block $B_i$. In what follows, we use the set of blocks $\{B_i\}_{i=1}^{p-1}$ to represent the intervals $\{[d_i, d_{i+1})\}_{i=1}^{p-1}$.
The corresponding adjacency matrix $S$ generates a subshift of finite type,  which we denote it by $\Sigma$, i.e. 
$$ \Sigma=\{({i_n})\in \{1,2,\cdots, p-1\}^{\mathbb{N}}:S_{{i_n}, {i_{n+1}}}=1\}.$$
Let  $$P=\{(i_n)\in \Sigma: \exists\, k\geq 0\, \mbox{with}\,\sigma^{k}(i_n)=(p-1)^{\infty}\}.$$
In this section, we always assume that the binary expansion of $x\in[0,1)$ is generated by the doubling map, i.e. any binary expansion $(a_n)\in\{0,1\}^{\mathbb{N}}$ cannot end with $1^{\infty}$.  Hence we should remove $P$ from $\Sigma$.

Note that for each $1\leq i\leq p-1$, it has an associated $B_{i}$ which is one of the sets of the Markov partition. 
For any $x\in [0,1)$, we can find its associated coding in $\Sigma$ using the orbit of $x$, i.e. we can find some sequence $(j_k)\in \Sigma\setminus P$ such that $x\in B_{j_1}, T(x)\in B_{j_2}, \cdots, T^k(x)\in B_{j_{k+1}},\cdots$. Conversely, for any $(j_k)\in \Sigma \setminus P$, we may also find some point $x\in [0,1)$ such that $T^k(x)\in B_{j_{k+1}}, k\geq 0$. 

The following lemma is clear. 
\setcounter{Lemma}{5}
\begin{Lemma}\label{[a,b)}
$\dim_{H}(J(a,b))=\dim_{H}(J[a,b)),$ where $$J[a,b)=\{x\in [0,1):T^{n}(x)\notin [a,b), \forall\, n\geq 0\},$$ moreover, $J(a,b)=J[a,b)$ except for a countable set. 
\end{Lemma}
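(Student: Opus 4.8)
The plan is to compare the two survivor sets directly, show that they differ by at most a countable set of points, and then deduce equality of the Hausdorff dimensions from the finite (countable) stability of dimension. First I would record the obvious inclusion $J[a,b)\subseteq J(a,b)$: avoiding the half-open hole $[a,b)$ is more restrictive than avoiding the open hole $(a,b)$, so $T^n(x)\notin[a,b)$ for all $n$ forces $T^n(x)\notin(a,b)$ for all $n$. Next I would pin down the difference set. If $x\in J(a,b)\setminus J[a,b)$, then $T^n(x)\notin(a,b)$ for every $n\geq 0$, while $T^m(x)\in[a,b)$ for some $m\geq 0$; since $T^m(x)\notin(a,b)$, the only remaining possibility is $T^m(x)\in[a,b)\setminus(a,b)=\{a\}$, i.e.\ $T^m(x)=a$. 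Hence
$$J(a,b)\setminus J[a,b)\ \subseteq\ A:=\{x\in[0,1):\exists\,m\geq 0,\ T^m(x)=a\}.$$

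The key point is that $A$ is countable. For each fixed $m$, the equation $T^m(x)=a$ means $2^mx-a\in\Z$, whose solutions in $[0,1)$ are exactly $x=(a+k)/2^m$ with $0\leq k\leq 2^m-1$, a finite set of cardinality $2^m$. Thus $A=\bigcup_{m\geq 0}\{x:T^m(x)=a\}$ is a countable union of finite sets, so $A$ is countable and in particular $\dim_{H}(A)=0$.

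Finally, combining the chain of inclusions $J[a,b)\subseteq J(a,b)\subseteq J[a,b)\cup A$ with the monotonicity and finite stability of Hausdorff dimension yields
$$\dim_{H}(J[a,b))\leq\dim_{H}(J(a,b))\leq\max\{\dim_{H}(J[a,b)),\dim_{H}(A)\}=\dim_{H}(J[a,b)),$$
so the two dimensions coincide. For the \emph{moreover} assertion, note that $J[a,b)\subseteq J(a,b)$ and $J(a,b)\setminus J[a,b)\subseteq A$ is countable, which says precisely that $J(a,b)=J[a,b)$ up to a countable set. I expect no serious obstacle here; the only point requiring care is the elementary observation that the difference of the two survivor sets is captured entirely by orbits landing exactly on the single boundary point $a$, together with the standard fact that the grand preimage of a point under the doubling map is countable.
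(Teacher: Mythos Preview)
Your proof is correct and follows essentially the same approach as the paper: the paper's proof consists solely of the chain of inclusions $J[a,b)\subset J(a,b)\subset \bigcup_{n\geq 1}\{x\in[0,1):T^n(x)=a\}\cup J[a,b)$, and you have simply filled in the details the paper omits (why the difference set is captured by preimages of $a$, why that set is countable, and why countable stability of $\dim_H$ finishes the argument). Your version is in fact slightly more careful, since you allow $m\geq 0$ and thus explicitly account for the point $x=a$ itself.
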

\begin{proof}
The lemma follows the following simple inclusions.
$$J[a,b)\subset J(a,b)\subset  \cup_{n=1}^{\infty}\{x\in[0,1):T^n(x)=a\} \cup J[a,b).$$ 
\end{proof}
Since the Hausdorff dimension of a countable set is zero, in the remaining of this section, we only consider the set $J[a,b)$ instead of $J(a,b)$.

By virtue of  the definition of  Markov partition, the hole $[a,b)$ is the union of some  sets of this partition.  We denote its associated blocks by $B_{i_1}, B_{i_2},\cdots,B_{i_s}$.  For simplicity we denote these blocks by $\widehat{B_1}=B_{i_1}, \cdots, \widehat{B_s}=B_{i_s}$.

Let $S$ be the  adjacency matrix, and the associated blocks of the hole  are $B_{i_1}, B_{i_2},\cdots, B_{i_s}$. We remove the $i_j$-th row and $i_j$-th column of $S$, $1\leq j\leq s$, and denote this new matrix  by $S^{'}.$ Similarly, the subshift of finite type generated by $S^{'}$ is denoted by $\Sigma^{'}$. 
For  Example \ref{example} we mentioned above,
$$
 S^{'}=\begin{pmatrix}
  1 & 1 & 1  & 0& 0\\
  0 & 0 & 0  & 0& 0\\  
0 & 0 & 0  & 0& 0\\  
  0 & 1 & 0  & 0& 0\\
  0& 0 & 1 & 1& 1\\
 \end{pmatrix}.
$$
\setcounter{Lemma}{6}
\begin{Lemma}\label{bijection}
There is a bijection between  the set of all binary expansions of points in $[0,1)$ and $\Sigma\setminus{P}$. 
\end{Lemma}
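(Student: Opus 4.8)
The plan is to realize the asserted bijection concretely as the itinerary (coding) map attached to the Markov partition, and then to check that it is well defined, injective, and surjective once $P$ is deleted. The first thing I would record is the structural fact that makes the binary digits available at the level of blocks: since $\frac{1}{2}$ is one of the partition points $d_i$, every block $B_i=[d_i,d_{i+1})$ lies entirely in $[0,\frac{1}{2})$ or entirely in $[\frac{1}{2},1)$, so we may attach to $B_i$ a digit $\epsilon_i\in\{0,1\}$. This induces a symbol-to-digit map $\Pi\colon\Sigma\to\{0,1\}^{\mathbb{N}}$, $\Pi((i_n))=(\epsilon_{i_n})$. For $x\in[0,1)$ let $\iota(x)=(i_n)$ be its itinerary, i.e. $T^{n-1}(x)\in B_{i_n}$; the Markov property (the doubling-map analogue of Lemma \ref{Partition}) guarantees $\iota(x)\in\Sigma$, and by construction the $n$-th digit of the doubling-map binary expansion of $x$ equals $\epsilon_{i_n}$. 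Hence $\Pi\circ\iota(x)$ is exactly the binary expansion of $x$, which already singles out the candidate bijection as $\iota$ followed by the standard identification of $[0,1)$ with the binary expansions not ending in $1^{\infty}$.

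Next I would verify that $\iota$ takes values in $\Sigma\setminus P$ and is injective. For the first point, if $\iota(x)\in P$ then some iterate of $x$ has its entire forward orbit inside the rightmost block $B_{p-1}=[d_{p-1},1)\subseteq[\frac{1}{2},1)$; but the set of $y$ with $T^{m}(y)\in[\frac{1}{2},1)$ for all $m\ge 0$ is $\bigcap_{k\ge 1}[1-2^{-k},1)=\emptyset$, a contradiction, so in fact $\iota(x)\in\Sigma\setminus P$. Injectivity is immediate: if $\iota(x)=\iota(y)$ then $\Pi(\iota(x))=\Pi(\iota(y))$, so $x$ and $y$ have the same binary expansion and therefore coincide.

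The substantial step is surjectivity of $\iota$ onto $\Sigma\setminus P$. Given $\omega=(i_n)\in\Sigma\setminus P$, I would form the nested cylinders $C_k=\{x:T^{n-1}(x)\in B_{i_n},\ 1\le n\le k\}$. The Markov property makes each $C_k$ a nonempty interval with $C_{k+1}\subseteq C_k$, and since $T$ expands by the factor $2$ on each branch one gets $|C_k|\le 2^{-(k-1)}$, so $\bigcap_k\overline{C_k}$ is a single point $x_0$. It remains to show $x_0\in\bigcap_k C_k$, which gives $\iota(x_0)=\omega$. I expect this to be the main obstacle. The only way $x_0$ can fail to lie in some $C_k$ is that $x_0$ is the (excluded) right endpoint of $C_k$ for all large $k$, i.e. the cylinders are \emph{frozen} on the right at a partition point approached from the left. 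I would analyse this frozen case through the left-limit map $\widehat{T}(t)=\lim_{s\to t^{-}}T(s)$: freezing forces the right endpoints $e_k=\sup B_{i_k}$ to form a $\widehat{T}$-orbit of partition points with $e_{k+1}=\widehat{T}(e_k)$, and here the hypothesis $\omega\notin P$ must be exploited in full to rule this out, reducing the problem to the claim that a frozen $\widehat{T}$-orbit of partition points can persist only when it is eventually the fixed point $1$, equivalently only when $\omega\in P$. This boundary bookkeeping, rather than the expanding estimate, is the delicate heart of the argument.

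Finally I would assemble the pieces. Once $\iota$ is shown to be a bijection from $[0,1)$ onto $\Sigma\setminus P$, composing it with the classical bijection between $[0,1)$ and the set of binary expansions not ending in $1^{\infty}$ yields the asserted bijection between all binary expansions of points of $[0,1)$ and $\Sigma\setminus P$. Everything outside the surjectivity/boundary analysis of the previous paragraph is the standard expanding-Markov-partition coding dictionary, and the sole purpose of removing $P$ is precisely to discard the sequences that would code the excluded endpoint approached from the left.
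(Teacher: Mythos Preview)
Your route is the paper's: both construct the itinerary map $\iota$ from $[0,1)$ (equivalently, from binary expansions not ending in $1^\infty$) into $\Sigma\setminus P$, together with the symbol-to-digit map $\Pi$ in the opposite direction, and aim to show these are mutual inverses. You go further than the paper by isolating surjectivity of $\iota$ as the nontrivial point and reducing it to the ``frozen right endpoint'' phenomenon. However, your proposed resolution fails: it is \emph{not} true that a persistent $\widehat T$-orbit of partition endpoints must eventually reach $1$. Take $a=\tfrac13$ and $b=\tfrac25$, both with periodic doubling orbits; the partition points become $0,\tfrac15,\tfrac13,\tfrac25,\tfrac12,\tfrac35,\tfrac23,\tfrac45,1$, and $\widehat T(\tfrac13)=\tfrac23$, $\widehat T(\tfrac23)=\tfrac13$ is a $2$-cycle of partition endpoints disjoint from $1$. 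With $B_2=[\tfrac15,\tfrac13)$, $B_3=[\tfrac13,\tfrac25)$, $B_6=[\tfrac35,\tfrac23)$, $B_7=[\tfrac23,\tfrac45)$, one checks $B_6\subseteq T(B_2)$ and $T(B_6)=B_2$, so $\omega=(2,6)^\infty\in\Sigma\setminus P$. Its cylinders $C_k$ are half-open intervals with common right endpoint $\tfrac13$ and left endpoints increasing to $\tfrac13$, hence $\bigcap_k C_k=\varnothing$; the point $\tfrac13$ itself lies in $B_3$ and has itinerary $(3,7)^\infty$. Equivalently, $\Pi\big((2,6)^\infty\big)=\Pi\big((3,7)^\infty\big)=(0,1,0,1,\ldots)$, so $\Pi$ is not injective on $\Sigma\setminus P$ and $\iota$ is not onto.

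This is not a flaw peculiar to your write-up: the lemma as literally stated is false for this choice of $(a,b)$, and the paper's own proof---which writes down the two maps and asserts bijectivity without ever checking they are mutually inverse---never confronts the obstruction you uncovered. A correct version replaces $P$ by $P\cup Q'$, where $Q'$ is the countable set of $\omega\in\Sigma$ whose right-endpoint sequence $e_k=d_{i_k+1}$ eventually satisfies $e_{k+1}=\widehat T(e_k)$; your boundary analysis shows precisely why this is the set to remove. The paper's downstream measure-theoretic applications survive because $Q'$ is countable and hence null, but the bijection claim itself needs this amendment.
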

\setcounter{Remark}{10}
\begin{proof}
Given $({i_n})\in \Sigma\setminus{P}$, for $i_1$ we can find a corresponding interval $B_{i_1}$. $B_{i_1}$ is in the domain of $T_0=2x$ or $T_1=2x-1$. If $B_{i_1}$ is in the domain of $T_0$, then we let $a_1=0$, otherwise, we set $a_1=1$. 
By the definition of Markov partition, $ B_{i_{n+1}}\subset T( B_{i_n})$ for any $n\geq 1$. Then similarly 
we  identify $({i_n})$ with  a sequence $(a_n)\in\{0,1\}^{\mathbb{N}}$ in the following way
\[a_n= \begin{cases} 
       0 &\mbox{if } B_{i_n} \mbox{ is in the domain of }T_0=2x\\
      1 & \mbox{if } B_{i_n} \mbox{ is in the domain of }T_1=2x-1
   \end{cases}
\]
Define $x=\sum_{n=1}^{\infty}a_n2^{-n}$, we know that $(a_n)$ is a binary expansion of $x$, which cannot end with $1^{\infty}$ as  $({i_n})\notin P$.
Conversely,  given any binary coding $(a_n)$ which does not end with $1^{\infty}$, we can define a point $x=\sum_{n=1}^{\infty}a_n2^{-n}$. For any $k$, $T^k(x)$ falls into some $B_j, 1\leq j\leq p-1$. Therefore we may find a unique
 associated sequence $({i_n})\in\Sigma\setminus{P}$.
 Now we can define a bijection $$\phi: \{\mbox{all the binary expansions under the doubling map}\}\to \Sigma\setminus{P}$$by 
 $$\phi(a_n)=({i_n}),$$
 if $T^{n}(x)\in B_{i_{n+1}}$, $n\geq 0$, where $x=\sum_{n=1}^{\infty}a_n2^{-n}$. 
\end{proof}

 In order to construct an isomorphism between $J[a,b)$ and $\Sigma^{'},$ we have to remove some points from these two sets respectively. 
Let $E=\{d_1=0,d_2,\cdots,d_{p-1}\}$,  define $$J[a,b)\setminus(\cup_{n=0}^{\infty} T^{-n}(E)).$$
By Lemma \ref{bijection}, every point in $x\in [0,1)$ has a unique coding in $\Sigma\setminus P$. Since $\cup_{n=0}^{\infty} T^{-n}(E)$ is a countable set, the corresponding codings of this set in $\Sigma\setminus P$ is also a countable set. Denote this set by $Q$. 

For any $x\in J[a,b)\setminus(\cup_{n=0}^{\infty} T^{-n}(E))$, we consider the orbit of $x$, i.e. we can find an infinite sequence $\{B_{j_k}\}^{\infty}_{k=1}$ such that $x\in B_{j_1}, T(x)\in B_{j_2}, \cdots, T^k(x)\in B_{j_{k+1}},\cdots$. Hence,  we obtain a unique $(j_k)\in \Sigma^{'}\setminus(P\cup Q)$. Subsequently we can define a map $$\phi: J[a,b)\setminus(\cup_{n=0}^{\infty} T^{-n}(E))\to \Sigma^{'}\setminus(P\cup Q)$$ by 
$$\phi(x)=(j_k).$$
Since $\Sigma^{'}$ is a subshift of finite type,  and the matrix $S^{'}$ is irreducible, 
we can define a Parry measure $\mu$, which is the unique measure of maximal entropy,  on $\Sigma^{'}$, see \cite{Par64}.   Now we can prove following result. 
\setcounter{Theorem}{7}
\begin{Theorem}\label{isomorphic}
Let $J[a,b)=\{x\in [0,1):T^{n}(x)\notin [a,b), \forall\, n\geq 0\},$  and $\mu$ is the Parry  measure of $\Sigma^{'}$. Then 
$(J[a,b), T, \mu\circ \phi)$ is measure theoretically isomorphic to $(\Sigma^{'}, \sigma, \mu)$.
\end{Theorem}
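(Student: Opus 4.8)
The plan is to exhibit $\phi$ as a bi-measurable bijection, defined off a $\mu$-null set on each side, which conjugates the doubling map $T$ with the shift $\sigma$; since the measure on $J[a,b)$ is declared to be the pullback $\mu\circ\phi$, measure preservation will then hold by construction, and the four conditions for a measure-theoretic isomorphism will follow. First I would record the bookkeeping of deleted points. On the symbolic side we remove $P\cup Q$, where $P$ consists of codings eventually equal to $(p-1)^{\infty}$ (the sequences corresponding to binary expansions ending in $1^{\infty}$) and $Q$ is the set of codings of $\cup_{n\ge 0}T^{-n}(E)$; both are countable. On the interval side we remove $\cup_{n\ge 0}T^{-n}(E)$, the (countable) set of points whose orbit ever hits a partition endpoint. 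As the Parry measure $\mu$ is non-atomic on the irreducible subshift $\Sigma^{'}$, these excised sets are $\mu$-null, so working on the complements costs nothing.

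The core of the argument is a single equivalence: a point $x\in[0,1)$ lies in $J[a,b)$ if and only if its coding $\phi(x)=(j_k)$ never uses any of the hole indices $i_1,\dots,i_s$, and this holds if and only if $(j_k)\in\Sigma^{'}$. Indeed, by the definition of the Markov partition the hole $[a,b)$ is exactly the union of the blocks $\widehat{B_1}=B_{i_1},\dots,\widehat{B_s}=B_{i_s}$, so $T^n(x)\notin[a,b)$ for all $n$ is precisely the statement that $T^n(x)$ never lands in one of these blocks, i.e. that the symbol $j_{n+1}$ avoids $\{i_1,\dots,i_s\}$ for every $n$. Moreover, $S$-admissibility of a sequence using only the surviving symbols coincides with $S^{'}$-admissibility, because $S^{'}$ is obtained from $S$ by deleting exactly the $i_j$-th rows and columns; hence such a sequence lies in $\Sigma^{'}$ and conversely. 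Combining this with Lemma \ref{bijection}, which already gives a bijection between binary expansions (under the doubling map) and $\Sigma\setminus P$, restricting that bijection to the surviving alphabet yields that $\phi$ is a bijection from $J[a,b)\setminus(\cup_{n\ge 0}T^{-n}(E))$ onto $\Sigma^{'}\setminus(P\cup Q)$.

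It remains to verify the dynamical and measurability conditions. The conjugacy $\phi\circ T=\sigma\circ\phi$ is immediate from the construction: if $x\in B_{j_1}$, $T(x)\in B_{j_2},\dots$, then the orbit of $T(x)$ visits $B_{j_2},B_{j_3},\dots$, so its coding is $\sigma\big((j_k)\big)$. For measurability I would note that $\phi^{-1}$ of a cylinder $[u_1\cdots u_n]$ in $\Sigma^{'}$ is the intersection of $J[a,b)$ with the interval $\cap_{k=1}^{n}T^{-(k-1)}(B_{u_k})$, a Borel set, so $\phi$ is Borel measurable; symmetrically, $\phi$ of the trace on $J[a,b)$ of a dyadic interval is a finite union of cylinders, giving measurability of $\phi^{-1}$. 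Since $\mu$ is $\sigma$-invariant (being a measure of maximal entropy) and $\phi$ conjugates $T$ with $\sigma$, the pullback $\mu\circ\phi$ is $T$-invariant, and $\phi$ is by definition measure preserving between $(J[a,b),T,\mu\circ\phi)$ and $(\Sigma^{'},\sigma,\mu)$.

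The main obstacle I expect is not the conjugacy, which is essentially formal, but the precise alignment between the \emph{geometric} avoidance condition $T^n(x)\notin[a,b)$ and the \emph{symbolic} passage from $S$ to $S^{'}$: one must confirm that deleting rows and columns indexed by the hole-blocks captures the avoidance condition exactly (no admissible transition is spuriously created or destroyed), and one must check that the countable sets $P$, $Q$, and $\cup_{n\ge 0}T^{-n}(E)$ are removed consistently on both sides so that the restriction of $\phi$ is genuinely onto $\Sigma^{'}\setminus(P\cup Q)$. The irreducibility of $S^{'}$ enters only to guarantee, via \cite{Par64}, the existence and uniqueness of the Parry measure $\mu$ that the isomorphism transports.
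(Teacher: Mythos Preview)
Your proposal is correct and follows essentially the same route as the paper: verify the conjugacy $\sigma\circ\phi=\phi\circ T$, invoke Lemma~\ref{bijection} to see that $\phi$ restricts to a bijection between $J[a,b)\setminus\bigl(\cup_{n\ge 0}T^{-n}(E)\bigr)$ and $\Sigma^{'}\setminus(P\cup Q)$, and conclude. If anything, you are more thorough than the paper, which does not explicitly address measurability of $\phi$ or why the excised countable sets are $\mu$-null; your observations that cylinder preimages are Borel and that the Parry measure on an irreducible subshift of finite type is non-atomic fill those gaps.
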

\begin{proof}
It is easy to check that $\sigma\circ \phi=\phi\circ T$. Hence, it remains to prove that $\phi$ is a bijection between $J[a,b)\setminus(\cup_{n=0}^{\infty} T^{-n}(E))$ and $ \Sigma^{'}\setminus(P\cup Q)$. Firstly $\phi$ is one-to-one. For any $x,y\in J[a,b)\setminus(\cup_{n=0}^{\infty} T^{-n}(E))$, 
if $\phi(x)=\phi(y)$, then we can  implement the idea, which is used in the proof of Lemma \ref{bijection}, to find the binary codings of $x$ and $y$. Since $\phi(x)=\phi(y)$, it follows that their associated binary codings also coincide. Hence, $x=y$.  On the other hand,   for any $(j_k)\in  \Sigma^{'}\setminus(P\cup Q)$, we can find a point $x$ such that $T^k(x)\in B_{j_{k+1}}, k\geq 0$ (also see the proof of Lemma \ref{bijection}). By the definition of  $ \Sigma^{'}\setminus(P\cup Q)$, $T^k(x)$ cannot hit  $\cup_{n=0}^{\infty} T^{-n}(E)$, and $\{B_{j_{k+1}},k\geq0\}$ does not consist of any $\widehat{B_1}, \cdots, \widehat{B_s}$. Therefore, $x\in J[a,b)\setminus(\cup_{n=0}^{\infty} T^{-n}(E))$. 
\end{proof}
Now Theorem \ref{subshift} follows from Theorem \ref{isomorphic} and Lemma \ref{[a,b)}.

We end with an example.
\setcounter{Example}{8}
\begin{Example}
Let $a=\dfrac{1}{31}$ and $b=\dfrac{2}{31}$. Then $\dim_{H}(J(a,b))=\dfrac{\log \alpha}{\log 2}$, where 
$\alpha$ is the largest positive root  of $x^4=x^3+x^2+x+1$.  
Note that the orbits of $a$ and $b$ are eventually periodic under the doubling map. 
Let $A=[0, 1/31),B=[1/31, 2/31),C=[2/31, 4/31),D=[4/31, 8/31),E=[8/31, 1/2),$
$F=[1/2,16/31), G=[16/31, 1)$. Then we can define an adjacency matrix
$$
 S^{'}=\begin{pmatrix}
  1 & 0& 0 & 0 & 0&0 \\
 0 & 0 & 1 & 0 & 0&0 \\ 
  0 & 0 & 0 & 1 & 1&0 \\
0& 0 & 0 & 0 & 0&1 \\
1& 0 & 0 & 0 & 0&0 \\
 0 & 1 & 1 & 1 & 1&1\\ 
 \end{pmatrix}
$$
with respect to $J(a,b)$. The spectral radius of this matrix is $\alpha.$
\end{Example} 
\section*{Acknowledgment}
The authors would like to thank the anonymous referees for many useful suggestions and remarks. 
The second author was supported by a grant from the China Scholarship Council grant no.201206140003, and by a grant from the
National Natural Science Foundation of China grant no.11271137, no.11671147.

\end{document}